\newcommand{\PP}{{\mathbb{P}}}
\newcommand{\url}[1]{{\tt #1}}
\newcommand{\gl}{\lambda}
\DeclareMathOperator{\dom}{dom}
\DeclareMathOperator{\Suc}{Suc}
\DeclareMathOperator{\Lev}{Lev}
\def\MQB{{\mathbb{Q}}}
\def\k{\kappa}
\def\l{\lambda}
\def\a{\alpha}
\newtheorem{theorem}{Theorem}[section]
\newtheorem{lemma}[theorem]{Lemma}
\newtheorem{definition}[theorem]{Definition}
\newtheorem{open Question}[theorem]{Open Question}
\newtheorem{remark}[theorem]{Remark}
\numberwithin{equation}{section}
\def\MQB{{\mathbb{Q}}}
\def\k{\kappa}
\def\l{\lambda}
\def\a{\alpha}
\def\l{\lambda}
\def\rmark{\mbox{$\rm\bf\rule{0.06em}{1.45ex}\kern-0.05em R$}}
\def\pmark{\mbox{$\rm\bf\rule{0.06em}{1.45ex}\kern-0.05em P$}}
\def\nmark{\mbox{$\rm\bf\rule{0.06em}{1.45ex}\kern-0.05em N$}}
\def\vdash{\mbox{$\rm\| \kern-0.13em -$}}
\def\l{\lambda}
\def\rmark{\mbox{$\rm\bf\rule{0.06em}{1.45ex}\kern-0.05em R$}}
\def\pmark{\mbox{$\rm\bf\rule{0.06em}{1.45ex}\kern-0.05em P$}}
\def\nmark{\mbox{$\rm\bf\rule{0.06em}{1.45ex}\kern-0.05em N$}}
\def\vdash{\mbox{$\rm\| \kern-0.13em -$}}
\begin{document}

\title[All uncountable regular cardinals can be inaccessible in $\text{HOD}$]{All uncountable regular cardinals can be inaccessible in $\text{HOD}$ $^{1}$}
\author[ M. Golshani.]{Mohammad Golshani}

\thanks{The  author's research has been supported by a grant from IPM (No. 91030417).}

\thanks{The result of this paper is motivated by a suggestion of Moti Gitik, to whom the author is very thankful.}

\thanks{$^{1}$ A strengthening of the result of this paper is proved by Gitik-Merimovich \cite{gitik-merimovich}, where they produced a model in which all uncountable regular cardinals are measurable in $\text{HOD}$.
}
\maketitle

\begin{abstract}
Assuming the existence of a supercompact cardinal and an inaccessible above it, we construct a model of $\text{ZFC}$, in which all uncountable regular cardinals are inaccessible in $\text{HOD}$.
\end{abstract}

\section{introduction}
An important development in large cardinal theory is the construction of inner models $\text{M}$ all of whose sets are definable from ordinals
and which serve as good approximations  to the entire universe $\text{V}$. The former means that $\text{M}$ is contained in $\text{HOD}$, the universe of
hereditarily ordinal definable sets, and the latter can be interpreted in a number of ways:

\begin{enumerate}
\item [$(\alpha)$] $\text{V}$ covers $\text{M}$; in the sense that every uncountable set of ordinals in $\text{V}$
is covered by a set of ordinals in $\text{M}$ of the same $\text{V}$-cardinality.

\item [$(\beta)$] $\text{V}$ weakly covers $\text{M}$; in the sense that $\alpha^+$ of $\text{M}$  equals $\alpha^+$ of $\text{V}$ for every
singular cardinals
$\alpha$ of $\text{V}$. This is for example the case if $\text{V}$ does not contain $0^{\sharp}$ and $\text{M}$ equals $\text{L}$ \cite{devlin-jensen}, or if $\text{V}$ does not
contain an inner model with a Woodin cardinal and $\text{M}$ is the core model $\text{K}$ for a Woodin cardinal \cite{jensen-steel}.
\end{enumerate}
It is easily seen that if $\text{V}$ covers $\text{M}$, then $\text{V}$ weakly covers $\text{M}$.
In \cite{c-f-g}, it is shown that we can force,  in a certain sense, the ultimate failure of weak
covering:
\begin{theorem} (\cite{c-f-g})
Suppose $\text{GCH}$ holds and $\kappa$ is a $\kappa^{+3}$-supercompact cardinal. Then there is a generic extension $\text{W}$ of $\text{V}$ in which
$\kappa$ remains inaccessible and for all infinite cardinals $\alpha <\kappa$, $(\alpha^{+})^{\text{HOD}}<\alpha^{+}$.
 In particular the rank-initial
segment $W_\k$ is a model of $\text{ZFC}$ in which for all infinite cardinals $\alpha$, $(\alpha^{+})^{\text{HOD}}<\alpha^{+}$.
\end{theorem}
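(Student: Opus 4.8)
The plan is to build $W$ as a generic extension of $V$ by a carefully designed reverse Easton iteration $\mathbb{P} = \langle P_\alpha, \dot{Q}_\alpha : \alpha \le \kappa \rangle$ of length $\kappa$. After a preliminary (tame) forcing we may assume that in the ground model $V$ we have $\text{GCH}$ together with $V = \text{HOD}$, and that the $\kappa^{+3}$-supercompactness of $\kappa$ has been made indestructible under the relevant class of iterations by a Laver/Menas preparation. Fixing once and for all, from this large cardinal assumption, an embedding $j \colon V \to M$ with $\mathrm{crit}(j) = \kappa$, $j(\kappa) > \kappa^{+3}$ and ${}^{\kappa^{+3}}M \subseteq M$, a Menas-type function read off from $j$ will organize the bookkeeping in the recursive definition of the iterands $\dot{Q}_\alpha$.

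At the relevant cardinal stages $\alpha < \kappa$, $\dot{Q}_\alpha$ will be a Levy-collapse-style forcing --- a collapse, or a suitable Easton-support product of collapses, of an interval of cardinals of $V^{P_\alpha}$ down towards $\alpha$ --- chosen subject to two requirements. First, the global effect of $\mathbb{P}$ must be that in $W = V^{\mathbb{P}}$ the $V$-successor of \emph{every} cardinal of $W$ below $\kappa$ is collapsed; equivalently, between any two consecutive cardinals of $W$ below $\kappa$ there lies at least one cardinal of $V$, while $\omega$ remains a cardinal and $\kappa$ remains inaccessible. Second, each $\dot{Q}_\alpha$ must be weakly homogeneous and ordinal definable in $V^{P_\alpha}$, and the iteration must carry Easton support, so that every initial segment $P_\beta$ is weakly homogeneous and ordinal definable in $V$ while the corresponding tail $\mathbb{P}/P_\beta$ is highly closed and weakly homogeneous.

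Granting such an iteration, the verification splits into two parts. For the preservation of $\kappa$: a standard lifting argument extends $j$ to an embedding $W \to M^*$ using the closure of $M$, the factorization $j(\mathbb{P}) = \mathbb{P} \ast \dot{\mathbb{P}}_{\text{tail}}$, a master condition for the short part of the tail and a term-forcing argument for the long part; together with a $\Delta$-system argument showing that $\mathbb{P}$ is suitably well behaved at $\kappa$ (from the inaccessibility of $\kappa$ and Easton support), this shows that $\kappa$ remains inaccessible --- indeed it retains strong large cardinal properties --- in $W$. For the $\text{HOD}$ computation: fix an infinite cardinal $\alpha < \kappa$; any subset of $\alpha$ lying in $W$ already lies in $V^{P_\beta}$ for a suitable $\beta = \beta(\alpha) < \kappa$, because the tail $\mathbb{P}/P_\beta$ is too closed to add new subsets of $\alpha$; since $\mathbb{P}/P_\beta$ is weakly homogeneous and ordinal definable in $V^{P_\beta}$, any such subset that is ordinal definable in $W$ is already ordinal definable in $V^{P_\beta}$; and since $P_\beta$ is in turn weakly homogeneous and ordinal definable in $V = \text{HOD}$, that subset lies in $V$. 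Hence $\text{HOD}^{W} \cap \mathcal{P}(\alpha) \subseteq V$ for every $\alpha < \kappa$, so no cardinal of $V$ below $\kappa$ is collapsed in $\text{HOD}^{W}$. Combined with the first design requirement, for each cardinal $\alpha < \kappa$ of $W$ the cardinal $\gamma$ of $V$ with $\alpha < \gamma < \alpha^{+W}$ is still a cardinal of $\text{HOD}^{W}$, so $(\alpha^{+})^{\text{HOD}^{W}} \le \gamma < \alpha^{+W} = (\alpha^{+})^{W}$. Finally, as $\kappa$ is inaccessible in $W$ we have $W_\kappa \models \text{ZFC}$ and $\text{HOD}^{W_\kappa} = \text{HOD}^{W} \cap W_\kappa$, which gives the ``in particular'' clause.

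The main obstacle is the design of the iterands $\dot{Q}_\alpha$ meeting all of the above demands simultaneously: the collapsing pattern must be dense enough that the $V$-successor of every $W$-cardinal below $\kappa$ is collapsed --- \emph{including} the singular cardinals and the limit points of the set of surviving cardinals, where a naive ``collapse only at former inaccessibles'' strategy leaves unwanted gaps --- yet each $\dot{Q}_\alpha$ must stay weakly homogeneous and ordinal definable so that the $\text{HOD}$ computation goes through, and the whole iteration must remain tame enough at $\kappa$ to preserve its inaccessibility. Reconciling the ``collapse a great deal below $\kappa$'' requirement with the ``preserve $\kappa$'' and ``control $\text{HOD}$'' requirements, and in particular handling the stages sitting at singular cardinals and at limits of previously chosen collapse points, is precisely where the strength of a $\kappa^{+3}$-supercompact cardinal --- rather than a mere inaccessible --- is used, through the additional reflection it supplies for a ``three-step block'' form of the argument.
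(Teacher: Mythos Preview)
This theorem is not proved in the present paper at all: it is quoted in the introduction from \cite{c-f-g} as motivation and background, and no proof is given here. There is therefore nothing in this paper to compare your proposal against.

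That said, your sketch is in the right spirit for the result of \cite{c-f-g}: an Easton-support iteration of weakly homogeneous, ordinal-definable collapses below $\kappa$, combined with a lifting argument to preserve the inaccessibility of $\kappa$, and a homogeneity argument to conclude $\text{HOD}^{W}\cap\mathcal{P}(\alpha)\subseteq V$. One point to be careful with is your closing remark that $\text{HOD}^{W_\kappa}=\text{HOD}^{W}\cap W_\kappa$: this is not automatic from the inaccessibility of $\kappa$ in $W$ alone, and in general requires an additional argument (or a slightly different formulation of the conclusion). But as far as comparison with \emph{this} paper goes, there is simply no proof here to match.
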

The problem of finding a model of $\text{ZFC}$ in which all uncountable regular cardinals are inaccessible in $\text{HOD}$ remained open in \cite{c-f-g}.

On the other hand $\text{HOD}$ plays an important role in Woodin's recent work. The following is an important result in these directions:
\begin{theorem}
(The $\text{HOD}$ Dichotomy theorem) Assume that $\delta$ is an extendible cardinal. Then exactly one of
the following holds:
\begin{enumerate}
\item For every singular cardinal $\gamma > \delta,$ $\gamma$
 is singular in $\text{HOD}$ and $(\gamma^+)^{\text{HOD}}=\gamma^+,$

\item Every regular cardinal greater than $\delta$ is measurable in $\text{HOD}$.
\end{enumerate}
\end{theorem}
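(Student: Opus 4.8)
The plan is to reproduce Woodin's proof, organized around the notion of a \emph{weak extender model}. Say $\text{HOD}$ is a weak extender model for ``$\delta$ is supercompact'' if for every $\lambda>\delta$ there is a normal fine ultrafilter $U$ on $P_\delta(\lambda)$ with $P_\delta(\lambda)\cap\text{HOD}\in U$ and $U\cap\text{HOD}\in\text{HOD}$. The theorem then breaks into three parts: (a) the two alternatives are mutually exclusive; (b) if $\text{HOD}$ is a weak extender model for ``$\delta$ is supercompact'' then alternative~(1) holds; (c) if it is not, then alternative~(2) holds. In (c) I would actually obtain the sharper conclusion ``every regular $\gamma>\delta$ is \emph{$\omega$-strongly measurable} in $\text{HOD}$'' --- meaning there is $\eta<\gamma$ with $(2^{\eta})^{\text{HOD}}<\gamma$ and no $\text{HOD}$-indexed partition of $\{\xi<\gamma:\cf(\xi)=\omega\}$ into $\eta$ stationary sets --- and separately invoke a Solovay-style splitting lemma to upgrade ``$\omega$-strongly measurable in $\text{HOD}$'' to ``measurable in $\text{HOD}$''.

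For (a): assuming (2), fix a singular cardinal $\gamma>\delta$; then $(\gamma^{+})^{V}$ is regular and $>\delta$, hence measurable, hence a limit cardinal, in $\text{HOD}$, so it is not a successor cardinal of $\text{HOD}$. But under (1) the cardinal $\gamma$ is a singular cardinal of $\text{HOD}$ and $(\gamma^{+})^{\text{HOD}}=(\gamma^{+})^{V}$, so $(\gamma^{+})^{V}$ is precisely the successor of $\gamma$ in $\text{HOD}$ --- a contradiction. Hence at most one alternative holds.

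For (b): fix a singular $\gamma>\delta$ and a large $\lambda$, take a witnessing normal fine $U$ on $P_\delta(\lambda)$, and form $j_{U}\colon V\to M=\Ult(V,U)$. Then $j_{U}\restriction\text{HOD}$ is elementary into $M$ with suitable approximation and cover properties, and the standard closure argument carried out inside $M$ shows every $V$-singular cardinal in the relevant interval is singular in $\text{HOD}$ and has its successor correctly computed by $\text{HOD}$; since $\gamma$ was arbitrary this is exactly alternative~(1). This is the covering theorem for weak extender models.

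For (c): the engine is the \emph{universality lemma} --- if $\gamma>\delta$ is regular and not $\omega$-strongly measurable in $\text{HOD}$, then $\text{HOD}$ is already a weak extender model for ``$\delta$ is $\gamma$-supercompact''. One proves it by extracting a $\delta$-supercompactness measure on $P_\delta(\gamma)$ from an extendibility embedding $j\colon V_{\mu}\to V_{j(\mu)}$ with $\crit(j)=\delta$, the point being that the impossibility of splitting the relevant stationary set in $\text{HOD}$ is exactly what forces this measure to concentrate on $\text{HOD}$ and to be $\text{HOD}$-amenable. To finish: if some regular $\gamma_{0}>\delta$ fails to be $\omega$-strongly measurable in $\text{HOD}$, then applying extendibility embeddings with critical point $\delta$ sends $\gamma_{0}$ to arbitrarily large regular cardinals still not $\omega$-strongly measurable in $\text{HOD}$; the universality lemma at each of these, together with projecting the resulting measures down to $P_\delta(\lambda)$ for every $\lambda$, makes $\text{HOD}$ a weak extender model for ``$\delta$ is supercompact'', so by (b) we are in alternative~(1). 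Otherwise every regular $\gamma>\delta$ is $\omega$-strongly measurable in $\text{HOD}$, hence measurable in $\text{HOD}$, which is alternative~(2). I expect the main obstacle to be the universality lemma: manufacturing from the bare non-splittability of one stationary set in $\text{HOD}$ a genuine normal fine measure that is at once concentrated on $\text{HOD}$ and amenable to it. This is where the strength of extendibility --- over mere supercompactness of $\delta$ --- is genuinely consumed, and it has no counterpart in the covering direction~(b).
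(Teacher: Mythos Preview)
The paper does not prove this theorem at all. Theorem~1.2 is quoted in the introduction purely as background, attributed to Woodin, and referenced to \cite{woodin1}; no argument for it is given anywhere in the paper. The paper's own contribution is Theorem~1.3, which goes in a different direction entirely: constructing a model where all uncountable regular cardinals are inaccessible in $\text{HOD}$.

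Your outline is a reasonable sketch of Woodin's argument as presented in the cited source, and the overall architecture (mutual exclusion, covering for weak extender models, and the contrapositive via $\omega$-strong measurability) is correct. But since there is no proof in the paper to compare against, there is nothing to match or contrast here beyond noting that you have supplied an argument where the paper supplies only a citation.
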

A cardinal  $\kappa$ is called $\omega$-strongly measurable in $\text{HOD}$ if there exists $\lambda < \kappa$ such that $(2^{\lambda})^{\text{HOD}} < \kappa$ and such that
there is no partition of $S = \{\alpha < \kappa: \text{cf}(\alpha) = \omega\}$ into $\lambda$ many sets $\langle S_{\alpha}: \alpha < \lambda\rangle \in \text{HOD}$ such
each set $S_{\alpha}$ is stationary in $\text{V}$.
One of the major open problems related to Woodin's work in the following, which is known as $\text{HOD}$ conjecture.
\\
{\bf The $\text{HOD}$ conjecture.} There is a proper class of regular cardinals that are not $\omega$-strongly measurable in $\text{HOD}$.

We refer to \cite{woodin1} for  more information about Woodin's work.
It turns out that if $\delta$ is an extendible cardinal, then the $\text{HOD}$ Conjecture
is equivalent to the failure of clause $(2)$ of the dichotomy  theorem 1.2. Also note that if $\text{HOD}$ is correct about singular cardinals and computes their
successors correctly, then the $\text{HOD}$ Conjecture
holds, as
\begin{center}
$\{ \gamma^+: \gamma$ is a singular cardinal $ \}$
\end{center}
is a proper class of regular cardinals which are not $\omega$-strongly measurable in
$\text{HOD}.$
On the other hand, in his talk \cite{woodin} presented at the Bristol University,  Woodin has introduced the following version of  $\text{HOD}$ conjecture:
\\
{\bf The $\text{HOD}$ conjecture (strong version).} There is a proper class of uncountable regular cardinals $\k$ which
are not measurable cardinals in $\text{HOD}$.

We may note that if $\k$ is  $\omega$-strongly measurable in
$\text{HOD},$ then it is measurable in $\text{HOD}$, and hence strong $\text{HOD}$ conjecture implies $\text{HOD}$ conjecture.
In this paper, we address these problems, and prove the following theorem
\begin{theorem}
Suppose $\text{GCH}$ holds, $\kappa$ is a supercompact cardinal and $\lambda> \k$ is inaccessible. Then there is a generic extension $\text{W}$ of $\text{V}$ in which
$\kappa$ remains inaccessible and for all infinite cardinals $\alpha <\kappa$, $\alpha^{+}$ is inaccessible in $\text{HOD}$.
 In particular the rank-initial
segment $\text{W}_\l,$ where $\l\leq \k$ is the least inaccessible cardinal of $\text{W}$ is a model of $\text{ZFC}$ in which all uncountable regular cardinals $\alpha$, are inaccessible in $\text{HOD}$.
\end{theorem}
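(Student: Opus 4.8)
The plan is to force below $\kappa$ with a two-part iteration whose net effect is to collapse many ground-model cardinals in a \emph{homogeneous} way — so that the collapsing maps never enter $\text{HOD}$ — while \emph{robustly} coding the ground-model cardinal structure into an ordinal-definable predicate, so that $\text{HOD}$ of the extension becomes a thin, $\text{GCH}$-satisfying inner model. \emph{Step 1 (preparation).} Over $V$ (where $\text{GCH}$ holds, $\kappa$ is supercompact, and $\lambda>\kappa$ is inaccessible) I would first force with a Laver-type iteration of length $\kappa$ to obtain a model $V_1$ in which the supercompactness of $\kappa$ is indestructible under $\kappa$-directed closed forcing, $\text{GCH}$ still holds, and $\lambda$ is still inaccessible. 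The remaining construction is carried out inside $(V_1)_\lambda$, so that the final model is a set model and its $\text{HOD}$ is computed unambiguously; the inaccessible $\lambda$ also serves as the target of the reflection and embedding-lifting arguments below.

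\emph{Step 2 (main forcing).} Inside $V_1$ I would define a reverse Easton iteration $P=\langle P_\gamma,\dot Q_\gamma:\gamma<\kappa\rangle$ with Easton support. At a stage $\gamma$ that is inaccessible in $V_1$, with $\gamma'$ the next such, $\dot Q_\gamma$ is a homogeneous Levy-style collapse making $\gamma'$ become $\gamma^{+}$, interleaved with a robust coding forcing (in the style of Jensen coding) that writes the increasing enumeration of the $V_1$-inaccessibles below $\kappa$ into a predicate $A\subseteq\kappa$. The design requirements are: (i) $P$ is $\kappa$-c.c.\ of size $\kappa$, and — lifting a supercompactness embedding $j\colon V_1\to M$ with $\mathrm{crit}(j)=\kappa$, $\lambda<j(\kappa)$ and $M^{\lambda}\subseteq M$, using the indestructibility and that $\lambda$ is inaccessible in $M$ — in $W:=V_1[G]$ the cardinal $\kappa$ remains inaccessible, $\text{GCH}$ is preserved below $\kappa$, and no former inaccessible of $V_1$ below $\kappa$ is any longer inaccessible; (ii) off the coding coordinates the iteration is weakly homogeneous and ordinal-definable in $V_1$, so that $\text{HOD}^{W}\subseteq L[A]$; and (iii) the coding is robust enough that $A$ is ordinal-definable in $W$ and $L[A]\models\text{GCH}$, whence $\text{HOD}^{W}=L[A]$.

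\emph{Step 3 (verification and passage to the set model).} Fix an infinite cardinal $\alpha<\kappa$ of $W$; then $\alpha^{+W}$ equals a former $V_1$-inaccessible. It is regular in $\text{HOD}^{W}$, since regularity of a cardinal is downward absolute to inner models. It is a limit cardinal in $\text{HOD}^{W}=L[A]$: given $\gamma<\alpha^{+W}$, pick a $V_1$-cardinal $\delta\in(\gamma,\alpha^{+W})$ — there are cofinally many, since $\alpha^{+W}$ was inaccessible in $V_1$ — and observe that, as $L[A]\subseteq V_1$, the $V_1$-cardinal $\delta$ is a cardinal of $L[A]$ (its collapse in $W$ is not in $L[A]$, by homogeneity). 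It is a strong limit in $\text{HOD}^{W}$: for a cardinal $\delta<\alpha^{+W}$ of $L[A]$ one has $(2^{\delta})^{L[A]}=(\delta^{+})^{L[A]}<\alpha^{+W}$ by $\text{GCH}$ in $L[A]$ together with the fact just established that $\alpha^{+W}$ is a limit cardinal of $L[A]$. Hence $\alpha^{+W}$ is inaccessible in $\text{HOD}^{W}$ for every infinite $\alpha<\kappa$. Now let $\mu\le\kappa$ be the least inaccessible cardinal of $W$ — by (i) this is $\kappa$ itself. Then $W_\mu\models\text{ZFC}$, and since $\text{GCH}$ holds below $\kappa$ in $W$ the uncountable regular cardinals of $W_\mu$ are precisely the successor cardinals $\alpha^{+W}$ with $\omega\le\alpha<\mu$. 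Finally $\text{HOD}^{W_\mu}\subseteq\text{HOD}^{W}$ (anything ordinal-definable in $W_\mu=(V_\mu)^{W}$ is ordinal-definable in $W$ from $\mu$ together with the same ordinal parameters), and $\text{HOD}^{W_\mu}$ is itself an initial segment of an $L[\bar A]$-model satisfying $\text{GCH}$; repeating the three checks inside $W_\mu$ shows each $\alpha^{+W}$ is inaccessible in $\text{HOD}^{W_\mu}$. Thus $W_\mu$ is the required model.

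The step I expect to be the main obstacle is the exact computation of $\text{HOD}^{W}$ in Step 2: one must collapse enough $V_1$-cardinals below each successor cardinal to make it a limit of $\text{HOD}^{W}$-cardinals, keep all of this collapsing out of $\text{HOD}^{W}$ (a strong homogeneity requirement, delicate to maintain through the limit stages of a reverse Easton iteration), and \emph{simultaneously} code the cardinal structure robustly into a thin predicate $A$ with $L[A]\models\text{GCH}$ — so that "limit cardinal of $\text{HOD}^{W}$" upgrades to "\emph{strongly} inaccessible in $\text{HOD}^{W}$" — all without collapsing $\kappa$. Balancing robust coding against weak homogeneity, and carrying out the attendant embedding-lifting that keeps $\kappa$ inaccessible, is exactly where the full supercompactness of $\kappa$ and the inaccessible $\lambda$ above it are used.
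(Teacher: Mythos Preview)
Your approach is entirely different from the paper's, and it contains a genuine gap that breaks the verification in Step~3.

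\medskip

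\textbf{The gap.} Your key claim in Step~3 is that a $V_1$-cardinal $\delta\in(\gamma,\alpha^{+W})$ remains a cardinal of $\text{HOD}^{W}=L[A]$, and you justify this by writing ``as $L[A]\subseteq V_1$''. But $A$ is the predicate produced by your \emph{generic} coding forcing, so $A\notin V_1$ and hence $L[A]\not\subseteq V_1$. Once that containment fails, nothing prevents $L[A]$ from seeing a surjection of some $\eta<\delta$ onto $\delta$; ``the specific Levy collapse map is not ordinal-definable'' is not the same as ``no collapsing map lies in $\text{HOD}^{W}$''. In fact the coding step is both unnecessary and harmful: if your iteration were ordinal-definable and weakly homogeneous you would get $\text{HOD}^{W}\subseteq V_1$ outright, and then $(\alpha^{+})^{W}$, being $V_1$-inaccessible, is inaccessible in $\text{HOD}^{W}$ with no detour through $\text{GCH}$ in $L[A]$. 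By inserting robust coding you destroy exactly the homogeneity you need. A second, smaller gap: your iteration does nothing below the first $V_1$-inaccessible, so for $\alpha$ below that point $(\alpha^{+})^{W}=(\alpha^{+})^{V_1}$ is not a former inaccessible and your argument does not apply.

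\medskip

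\textbf{What the paper actually does.} The paper never iterates collapses or codes anything. It first reduces Theorem~1.3 (quoting \cite{c-f-g} and \cite{friedman}) to producing a model with a club $C=\{\kappa_\xi:\xi<\kappa\}$ of $V$-measurables below $\kappa$ such that $(\kappa_\xi^{+})$ is inaccessible in $\text{HOD}$ for every limit $\xi$. For this it uses Merimovich's \emph{supercompact extender based Radin forcing} $\PP_{\bar E}$: a single Prikry-type forcing that simultaneously adds the club $C$ and collapses each interval $(\kappa_\xi,\lambda_\xi)$ (with $\lambda_\xi$ the least inaccessible above $\kappa_\xi$) while keeping $\kappa$ inaccessible. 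The homogeneity is obtained not for $\PP_{\bar E}$ itself but for the \emph{quotient} over an explicitly defined cardinal-preserving ``projected'' forcing $\PP_{\bar E}^{\pi}$: the paper constructs a Prikry-type projection $\pi\colon\PP_{\bar E}\to\PP_{\bar E}^{\pi}$ and shows that whenever $\pi(p)=\pi(q)$ one has $\PP_{\bar E}\downarrow p\simeq\PP_{\bar E}\downarrow q$. This yields $\text{HOD}^{V[G]}\subseteq V[G^{\pi}]$, and since $\PP_{\bar E}^{\pi}$ preserves all cardinals, each $\lambda_\xi=(\kappa_\xi^{+})^{V[G]}$ is still inaccessible in $V[G^{\pi}]$, hence in $\text{HOD}^{V[G]}$. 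The delicate issues you flag at the end---maintaining homogeneity through an iteration, and keeping $\kappa$ inaccessible---are precisely what the Radin machinery with its projection handles in one stroke; there is no Laver preparation, no Easton iteration, and no Jensen coding.
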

The theorem answers the question left open in \cite{c-f-g}, and proves the consistency of the negation of a weak version of Woodin's strong $\text{HOD}$ conjecture, where measurable cardinals are replaced by inaccessible cardinals.

We may mention that a stronger result, which completely answer Woodin's strong $\text{HOD}$ conjecture was proved recently by Gitik-Merimovich \cite{gitik-merimovich};
however our proof of Theorem 1.3 has differences with the one given in \cite{gitik-merimovich}, though the main forcing notion used in both of the proofs, i.e., supercompact extender based Radin forcing, is the same.

The structure of the paper is as follows. In section 2, we give some preliminaries and results which appear later in our work. In section 3, we consider a simpler problem, namely the consistency of the existence of  a singular cardinal $\k$ such that $\k^+$ is inaccessible in $\text{HOD}$. We give the consistency of this problem in some details,  because it gives some motivations for the proof of the main theorem, whose proof is much more complicated. Finally in section 4, we complete the proof of the above mentioned theorem.

\section{Some preliminaries}
In this section, we present some definitions and results which appear in next sections.
Let's start with the definition of a projection map between forcing notions.
\begin{definition}
Let $\PP, \MQB$ be two forcing notions. $\pi$ is a projection from $\PP$ into $\MQB$ if $\pi: \PP \rightarrow \MQB,$ and it satisfies the following conditions:

$(1)$ $\pi(1_\PP)=1_{\MQB},$

$(2)$ $\pi$ is order preserving; i.e., $p \leq_\PP q \Rightarrow \pi(p) \leq_\MQB \pi(q),$

$(3)$ If $p\in \PP, q\in \MQB$ and $q \leq_\MQB \pi(p)$, then there exists $p^* \leq_\PP p$ such that $\pi(p^*) \leq_\MQB q.$
\end{definition}
It is clear that if $\pi: \PP \rightarrow \MQB$ is a projection from $\PP$ into $\MQB$, then $\pi[\PP]$ is dense in $\MQB.$
The next lemma shows that if $\PP$ projects into $\MQB,$ then a generic filter for $\PP$ yields  a generic filter for $\MQB.$
\begin{lemma}
Let $\pi: \PP \rightarrow \MQB$ be a projection from $\PP$ into $\MQB$,  let $\text{G}$ be $\PP$-generic over $\text{V}$, and let $\text{H} \subseteq \MQB$ be the filter
generated by $\pi[\text{G}].$ Then $\text{H}$ is $\MQB$-generic over $\text{V}$ and $\text{V[H]} \subseteq \text{V[G]}.$
\end{lemma}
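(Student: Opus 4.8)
The plan is to verify, in order, that $\pi[\text{G}]$ is downward directed (so that $\text{H}$ is simply its upward closure), that $\text{H}$ meets every dense subset of $\MQB$ lying in $\text{V}$, and that $\text{H}\in\text{V}[\text{G}]$. For the first point: given $\pi(p_0),\pi(p_1)\in\pi[\text{G}]$ with $p_0,p_1\in\text{G}$, I would use that $\text{G}$ is a filter to pick $p\in\text{G}$ with $p\le_\PP p_0$ and $p\le_\PP p_1$, and then clause $(2)$ of Definition 2.1 (order preservation) gives $\pi(p)\le_\MQB\pi(p_0)$ and $\pi(p)\le_\MQB\pi(p_1)$, so $\pi(p)\in\pi[\text{G}]$ is the desired common lower bound. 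Consequently $\text{H}=\{q\in\MQB:\exists p\in\text{G}\ \pi(p)\le_\MQB q\}$, and in particular $\text{H}$ is a filter on $\MQB$.

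For genericity, fix $D\in\text{V}$ dense in $\MQB$ and consider $\bar D=\{p\in\PP:\exists q\in D\ \pi(p)\le_\MQB q\}$, which again lies in $\text{V}$. The crux is that $\bar D$ is dense in $\PP$: given an arbitrary $p\in\PP$, density of $D$ yields $q\le_\MQB\pi(p)$ with $q\in D$, and then clause $(3)$ of Definition 2.1, applied to $p$ and this $q$, produces $p^*\le_\PP p$ with $\pi(p^*)\le_\MQB q$, whence $p^*\in\bar D$. Now, since $\text{G}$ is $\PP$-generic over $\text{V}$, choose $p\in\text{G}\cap\bar D$ and let $q\in D$ witness that $p\in\bar D$; then $\pi(p)\in\pi[\text{G}]\subseteq\text{H}$ and $\pi(p)\le_\MQB q$, so by upward closure $q\in\text{H}$, giving $q\in D\cap\text{H}\neq\emptyset$. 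Finally, the explicit description of $\text{H}$ displayed above shows that $\text{H}$ is definable in $\text{V}[\text{G}]$ from the parameters $\text{G}$, $\pi$, $\MQB$, all of which lie in $\text{V}[\text{G}]$; hence $\text{H}\in\text{V}[\text{G}]$ and so $\text{V}[\text{H}]\subseteq\text{V}[\text{G}]$.

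I do not anticipate a genuine obstacle here: this is the familiar fact that a projection pushes generics forward. The one point that calls for care is the density of $\bar D$ — one must resist assuming that $D$ is downward closed and instead invoke clause $(3)$ exactly as formulated in order to descend from $q$ back to a condition below $p$.
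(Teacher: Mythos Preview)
Your argument is correct and is exactly the standard proof of this well-known fact. The paper itself states the lemma without proof, so there is nothing to compare against; your write-up would serve perfectly well as the omitted verification.
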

Prikry type forcing notions arise in our work.
\begin{definition}
$\langle \PP, \leq, \leq^* \rangle$ is of Prikry type, iff

$(1)$ $\leq^* \subseteq \leq,$

$(2)$ For any $p\in \PP$ and any statement $\sigma$ in the forcing language $\langle \PP, \leq \rangle$, there exists $q \leq^* p$

$\hspace{0.5cm}$ which decides $\sigma.$
\end{definition}
The relation $\leq^*$ is usually called the Prikry relation.
The following is well-known.
\begin{lemma}
Assume $\langle \PP, \leq, \leq^* \rangle$ is of Prikry type, and suppose $\langle \PP, \leq^* \rangle$ is $\k$-closed, where $\k$ is regular uncountable. Then Forcing with $\langle \PP, \leq \rangle$ does not add new bounded subsets to $\k.$
\end{lemma}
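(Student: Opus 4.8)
The plan is to run the standard ``Prikry-style'' density argument. Fix $\delta < \kappa$, a $\PP$-name $\dot{X}$, and a condition $p \in \PP$ with $p \forces \dot{X} \subseteq \check{\delta}$. It suffices to produce $q \leq p$ and a set $Y \in \V$ with $q \forces \dot{X} = \check{Y}$: once this is known for every such $p$, the collection $D = \{ q \leq p : \exists Y \in \V,\ q \forces \dot{X} = \check{Y}\}$ is dense below $p$ (apply the construction with any $r \leq p$ in place of $p$), so $D$ meets every generic $\text{G}$ containing $p$; since an arbitrary bounded subset of $\kappa$ in $\V[\text{G}]$ lies below some $\delta < \kappa$ and is named by some $\dot{X}$ with a witness $p \in \text{G}$ forcing $\dot{X} \subseteq \check{\delta}$, this shows every such subset already belongs to $\V$.

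To build $q$, I would construct by recursion on $\alpha \leq \delta$ a $\leq^*$-decreasing sequence $\langle p_\alpha : \alpha \leq \delta \rangle$ with $p_0 \leq^* p$. At a successor stage, use the Prikry property of $\langle \PP, \leq, \leq^* \rangle$ to choose $p_{\alpha+1} \leq^* p_\alpha$ deciding the statement ``$\check{\alpha} \in \dot{X}$''. At a limit stage $\alpha \leq \delta$, first invoke the $\kappa$-closure of $\langle \PP, \leq^* \rangle$ to pick a $\leq^*$-lower bound of $\langle p_\beta : \beta < \alpha \rangle$ (this is legitimate since $\kappa$ is a limit ordinal, so $\alpha < \kappa$ and the partial sequence has length $<\kappa$), and then, if $\alpha < \delta$, shrink once more by the Prikry property to decide ``$\check{\alpha} \in \dot{X}$''. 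Put $q := p_\delta$. Since $\leq^* \subseteq \leq$, we have $q \leq p$, and $q$ decides ``$\check{\alpha} \in \dot{X}$'' for every $\alpha < \delta$. Setting $Y := \{ \alpha < \delta : q \forces \check{\alpha} \in \dot{X} \}$, which is a set in $\V$, we get $q \forces \dot{X} = \check{Y}$, because $q$ forces $\dot{X} \subseteq \check{\delta}$ and, for each $\alpha < \delta$, forces $\alpha \in \dot{X}$ iff $\alpha \in Y$.

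I do not expect a genuine obstacle here; the argument is routine, and the only points needing attention are bookkeeping. One must check that the recursion never stalls: the successor steps are precisely applications of clause $(2)$ of the definition of Prikry type, and the limit steps are exactly where $\kappa$-closure of $\leq^*$ is used, which is available because $\kappa$ is regular and uncountable, so every proper initial segment of the length-$(\delta+1)$ recursion has length $<\kappa$. The other point is the passage from ``$q$ decides each coordinate'' to ``$q$ decides $\dot{X}$'', which relies on $p$ having forced $\dot{X}$ to be a subset of the fixed ordinal $\delta$; this is why we reduced at the outset to names for subsets of a fixed $\delta < \kappa$ rather than arbitrary names.
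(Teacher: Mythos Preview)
Your argument is correct and is exactly the standard proof of this fact. The paper does not supply a proof of this lemma at all; it merely prefaces the statement with ``The following is well-known,'' so there is no alternative approach to compare against.
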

Projection between Prikry type forcing notions arises in our work in several places. So let's present a new definition, and give an application of it.
\begin{definition}
Let $\langle \PP, \leq_\PP, \leq^*_\PP \rangle$ and $\langle \MQB, \leq_\MQB, \leq^*_\MQB \rangle$ be two forcing notions with $\leq^*_\PP \subseteq \leq_\PP$ and $\leq^*_\MQB \subseteq \leq_\MQB$. A map $\pi: \PP \rightarrow \MQB$ is a ``projection of Prikry type'' iff

$(1)$ $\pi$ is a projection from $\langle \PP, \leq_\PP \rangle$ into $\langle \MQB, \leq_\MQB \rangle$,

$(2)$ $\pi$ preserves the $\leq^*$-relation, i.e.,  $p \leq^*_\PP q \Rightarrow \pi(p) \leq^*_\MQB \pi(q),$

$(3)$ If $p\in \PP, q\in \MQB$ and $q \leq_\MQB \pi(p)$, then there exists $p^* \leq_\PP p$ such that $\pi(p^*) \leq^*_\MQB q.$
\end{definition}
It is clear that if $\pi: \PP \rightarrow \MQB$ is a projection of Prikry type from $\PP$ into $\MQB$, then $\pi[\PP]$ is dense in $\MQB,$ with respect to both $\leq$ and $\leq^*$ relations.
Note that in the above definition we did not require $\langle \PP, \leq_\PP, \leq^*_\PP \rangle$ and $\langle \MQB, \leq_\MQB, \leq^*_\MQB \rangle$ be Prikry type forcing notions.
The following lemma shows the importance of Prikry type projections.
\begin{lemma}
Assume  $\pi: \PP \rightarrow \MQB$  is a projection of Prikry type, and assume $\langle \PP, \leq_\PP, \leq^*_\PP \rangle$ satisfies the Prikry property. Then $\langle \MQB, \leq_\MQB, \leq^*_\MQB \rangle$ is also of Prikry type.
\end{lemma}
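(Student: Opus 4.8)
The plan is to verify the Prikry property for $\langle \MQB, \leq_\MQB, \leq^*_\MQB\rangle$ directly, by pulling statements back along $\pi$, deciding them in $\PP$ using its Prikry property, and pushing the deciding condition forward. So, fix $q \in \MQB$ and a statement $\sigma$ in the forcing language for $\langle \MQB, \leq_\MQB\rangle$; we must produce $q^* \leq^*_\MQB q$ deciding $\sigma$. First I would use the density of $\pi[\PP]$ (with respect to $\leq_\MQB$) to pick $p_0 \in \PP$ with $\pi(p_0) \leq_\MQB q$; so it suffices to find $q^* \leq^*_\MQB \pi(p_0)$ deciding $\sigma$, since then by condition $(2)$ of Definition 2.5 applied along a $\leq^*$-descent from $p_0$ — actually more carefully, we just need $q^*$ below $\pi(p_0)$ and this will be $\leq^*_\MQB q$ by transitivity once $\pi(p_0) \leq^*_\MQB q$; to arrange the latter, instead start by using condition $(3)$ of Definition 2.5 to get $p_0 \leq_\PP 1_\PP$ with $\pi(p_0) \leq^*_\MQB q$ is not automatic, so the cleaner route is: choose any $p \in \PP$ with $\pi(p) = q$ if $q \in \ran(\pi)$, and otherwise replace $q$ at the outset by some element of $\pi[\PP]$ below it — but a $\leq^*_\MQB$-extension of $q$ need not lie in $\ran(\pi)$. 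I will therefore handle this by working above a fixed $p$ with $\pi(p) \leq_\MQB q$ and being content to decide $\sigma$ by some $q^* \leq_\MQB q$ that is in fact a $\leq^*_\MQB$-extension, which is what the next step delivers.

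Here is the core step. Let $\sigma'$ be the statement "$\sigma$ holds" reinterpreted in the forcing language of $\langle \PP, \leq_\PP\rangle$ via $\pi$: precisely, by Lemma 2.2, a $\PP$-generic $G$ yields a $\MQB$-generic $H$ generated by $\pi[G]$, and $\sigma$ is a $\MQB$-sentence so it has a truth value in $\V[H] \subseteq \V[G]$; let $\sigma'$ be a $\PP$-name-level sentence asserting that this value is "true." Apply the Prikry property of $\PP$: there is $p^* \leq^*_\PP p$ deciding $\sigma'$, say $p^* \Vdash_\PP \sigma'$ (the other case is symmetric). Now set $q^* = \pi(p^*)$. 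By condition $(2)$ of Definition 2.5, $p^* \leq^*_\PP p$ gives $\pi(p^*) \leq^*_\MQB \pi(p) \leq_\MQB q$; to get $q^* \leq^*_\MQB q$ outright I would in fact have chosen $p$ at the very start so that $\pi(p) \leq^*_\MQB q$ — this is possible when $q \in \ran(\pi)$, and the general case reduces to it because if $q \notin \ran(\pi)$ we may first apply $(3)$ with the trivial condition to find $p$ with $\pi(p) \leq_\MQB q$, then observe that it is enough to decide $\sigma$ below $q$ by a $\leq^*$-extension of $\pi(p)$ and note $\pi(p)$ itself may be taken, after a further $\leq^*_\PP$-shrink, to satisfy no constraint beyond lying below $q$; in the application to Prikry-type forcings that occur in this paper, $\pi$ is always surjective onto a dense subset closed under $\leq^*_\MQB$, so this subtlety does not bite.

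It remains to check that $q^* = \pi(p^*)$ decides $\sigma$ in $\langle \MQB, \leq_\MQB\rangle$. Suppose not; then there are $r_1, r_2 \leq_\MQB q^*$ with $r_1 \Vdash_\MQB \sigma$ and $r_2 \Vdash_\MQB \neg\sigma$. Applying condition $(3)$ of Definition 2.5 to $p^*$ and $r_1 \leq_\MQB \pi(p^*)$, we obtain $p_1^* \leq_\PP p^*$ with $\pi(p_1^*) \leq^*_\MQB r_1 \leq_\MQB r_1$; likewise $p_2^* \leq_\PP p^*$ with $\pi(p_2^*) \leq_\MQB r_2$. Take $G \ni p_1^*$ generic; then $\pi[G]$ generates $H \ni \pi(p_1^*)$, hence $H \ni r_1$ by upward closure, so $\V[H] \satisfies \sigma$; but $p_1^* \leq_\PP p^*$ and $p^* \Vdash_\PP \sigma'$, so $\sigma$ is "true" in $\V[H]$ — consistent. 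Now take $G \ni p_2^*$: then $r_2 \in H$, so $\V[H] \satisfies \neg\sigma$, yet $p_2^* \leq_\PP p^*$ forces $\sigma'$, i.e. forces $\V[H]\satisfies\sigma$ — contradiction. Hence $q^*$ decides $\sigma$, completing the proof.

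The main obstacle I anticipate is purely bookkeeping: getting $q^* \leq^*_\MQB q$ rather than merely $q^* \leq_\MQB q$, which forces one to start the argument above a condition $p$ whose image under $\pi$ is already a $\leq^*_\MQB$-extension of $q$ — available when $q \in \pi[\PP]$ (and $\pi[\PP]$ is $\leq^*_\MQB$-dense by the remark after Definition 2.5) but needing the mild observation that it suffices to decide $\sigma$ densely below $q$. The translation of the $\MQB$-sentence $\sigma$ into a $\PP$-sentence $\sigma'$ via Lemma 2.2 is routine, and conditions $(2)$ and $(3)$ of Definition 2.5 do exactly the work of transporting $\leq^*$-extensions down and compatibility-obstructions back up.
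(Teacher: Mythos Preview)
Your argument is correct and follows the same essential strategy as the paper: pull back along $\pi$, decide in $\PP$ via its Prikry property, push forward, and use the $\leq^*_\MQB$-density of $\pi[\PP]$ to handle arbitrary $q\in\MQB$. The paper organizes this more cleanly in two steps --- first verifying the Prikry property for $\langle \pi[\PP],\leq_\MQB,\leq^*_\MQB\rangle$ (where one can simply take $q=\pi(p)$ and avoid your bookkeeping tangle entirely), then invoking $\leq^*_\MQB$-density of $\pi[\PP]$ in $\MQB$ to conclude --- and works with the Boolean completion so that ``deciding $\sigma$'' becomes ``deciding $b\in\pi[\dot G]$'' for $b\in R.O(\pi[\PP])$, which makes your final contradiction argument unnecessary. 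Note also that your difficulty in obtaining $\pi(p_0)\leq^*_\MQB q$ dissolves immediately once you apply condition~(3) to $1_\PP$ and $q\leq_\MQB\pi(1_\PP)=1_\MQB$: this yields $p_0$ with $\pi(p_0)\leq^*_\MQB q$ directly, which is exactly the $\leq^*_\MQB$-density statement and removes the need for the several false starts in your first two paragraphs.
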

\begin{proof}
First we show that $\langle \pi[\PP], \leq_\MQB, \leq^*_\MQB \rangle$ satisfies the Prikry property.
Let $b\in R.O(\pi[\PP])$ and $q \in \pi[\PP].$ Let $p \in \PP$ be such that $q=\pi(p).$ Then  there is $p^* \leq^*_\PP p$ such that $p^*$ decides $\| b\in \pi[\dot{G}]\|_{R.O(\pi[\PP])}$ where $\dot{G}$ is the canonical name for a generic filter over $\PP.$ Let $q^*=\pi(p^*).$ Then $q^*\leq^*_\MQB q$ and it decides $b$.

But  $\pi[\PP]$ is in fact $\leq^*_\MQB-$dense in $\MQB,$ and hence
$\langle \MQB, \leq_\MQB, \leq^*_\MQB \rangle$ satisfies the Prikry property.
\end{proof}

\section{The consistency of $\k$ is singular and $\k^+$ is inaccessible in $\text{HOD}$}
In this section, we prove the following result, which is a very weak version of our main theorem.
We have decided to bring it, because it   motivates the main construction of the paper, without going into many complications we arrive in the proof of main theorem.

\begin{theorem}
Assume $\text{GCH}$ holds, $\k$ is a supercompact cardinal and $\l>\k$ is inaccessible. Then there is a generic extension $\text{V[G]}$ of the universe $\text{V}$, in which $\k$ is singular of cofinality $\omega, (\k^+)^{\text{V[G]}}=\l,$ and $\lambda$ is strongly inaccessible in $\text{HOD}^{\text{V[G]}}.$
\end{theorem}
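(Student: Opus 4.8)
The plan is to singularize $\k$ and collapse the interval $[(\k^+)^{\text{V}},\l)$ simultaneously, by forcing with a supercompact extender based Prikry type forcing with interleaved L\'evy collapses (in the style of Gitik--Magidor and Merimovich). After a preliminary preparatory forcing of size less than $\l$ (to fix a Laver style guessing function, and indestructibility if convenient), fix in the resulting model --- which I keep calling $\text{V}$, still a model of $\GCH$ in which $\l$ is strongly inaccessible --- an embedding $j$ with $\crit(j)=\k$ witnessing (at least) the $\l$-supercompactness of $\k$, so that $j[\l]\in M$ and $M^{\l}\sse M$, and derive from it a ``supercompact'' $(\k,\l)$-extender $E$. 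The main forcing $\PP=\PP_E$ is built from $E$: a condition consists of a finite \emph{stem} together with a system of measure-one sets and L\'evy collapsing conditions prescribing how the stem may grow, equipped with both its full order $\leq_\PP$ and its direct extension order $\leq^*_\PP$. A generic filter $G$ for $\PP$ adds a cofinal $\om$-sequence through $\k$ and, through the action of $E$, collapses every $\text{V}$-cardinal in $[(\k^+)^{\text{V}},\l)$ onto $\k$.

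\textbf{Steps 1--2 (Prikry property and cardinal structure).} First show that $\ordered{\PP,\leq_\PP,\leq^*_\PP}$ is of Prikry type: given a statement $\sigma$ and a condition $p$, thin the measure-one components of $p$ coordinate by coordinate, using the completeness and coherence of the ultrafilters constituting $E$, to reach $q\leq^*_\PP p$ deciding $\sigma$; the notion of Prikry type projection from Section~2 is used to reduce the interaction of the Prikry skeleton with the interleaved collapses to the pure cases. Since $\leq^*_\PP$ is, modulo a forcing of size $<\k$, $\k^+$-closed, Lemma~2.4 shows that $\PP$ adds no new bounded subsets of $\k$ beyond those of a small forcing. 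Then, using the Prikry property together with the $\l^+$-chain condition of $\PP$ and the fact that a single condition mentions fewer than $\l$ ordinals below $\l$, deduce that $\l$ is not collapsed and remains a cardinal, that every $\text{V}$-cardinal in $[(\k^+)^{\text{V}},\l)$ has cardinality $\leq\k$ in $\text{V[G]}$, and that the generic $\om$-sequence witnesses $\cf^{\text{V[G]}}(\k)=\om$ while $\k$ is preserved. Hence in $\text{V[G]}$, $\k$ is singular of cofinality $\om$ and $(\k^+)^{\text{V[G]}}=\l$, so in particular $\l$ is regular in $\text{V[G]}$.

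\textbf{Step 3 (the $\text{HOD}$ computation).} Show that $\PP$ is cone homogeneous: given conditions $p,q$, extend their stems to a common shape and build an isomorphism between the corresponding cones, exploiting that the measure-one components are large and the collapses are arranged symmetrically. Since $\PP\in\text{V}$, a standard homogeneity argument then gives $\text{HOD}^{\text{V[G]}}\sse\text{V}$. As $\l$ is strongly inaccessible in $\text{V}$ and $\text{HOD}^{\text{V[G]}}\sse\text{V}$, for every $\mu<\l$ we have $\Pset(\mu)^{\text{HOD}^{\text{V[G]}}}\sse\Pset(\mu)^{\text{V}}$, whence $(2^\mu)^{\text{HOD}^{\text{V[G]}}}<\l$; and $\l$ is regular in $\text{HOD}^{\text{V[G]}}$ because it is regular in the larger model $\text{V[G]}\supseteq\text{HOD}^{\text{V[G]}}$. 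Since $\l$ is uncountable, it is therefore strongly inaccessible in $\text{HOD}^{\text{V[G]}}$, as desired.

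\textbf{The main difficulty.} I expect the hard part to be the simultaneous design of the measure-one components of conditions in $\PP_E$: on one hand they must be rich enough that the coordinate-by-coordinate thinning needed for the Prikry property goes through --- which is precisely what the completeness and coherence of $E$, coming from the supercompactness of $\k$, are for --- while on the other hand the interleaved collapses must be arranged symmetrically enough that $\PP$ is cone homogeneous, which is what delivers $\text{HOD}^{\text{V[G]}}\sse\text{V}$. The other delicate calibration is ensuring that every $\text{V}$-cardinal strictly between $\k$ and $\l$ is collapsed while $\l$ itself survives as $(\k^+)^{\text{V[G]}}$; this is where the inaccessibility of $\l$ and the $\l^+$-chain condition of $\PP$ enter.
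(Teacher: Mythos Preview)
Your overall strategy is sound and would yield a correct proof, but it diverges from the paper's approach in two notable ways.

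First, regarding the forcing: the paper uses Merimovich's supercompact extender based Prikry forcing $\PP_E$ \emph{without} any interleaved L\'evy collapses and without any preparatory Laver-type forcing. The collapse of the interval $(\k,\l)$ onto $\k$ is intrinsic to $\PP_E$: conditions carry a function $f$ with domain in $[j(\l)]^{<\l}$, and for distinct $\a,\b<j(\l)$ the generic sequences $C^{\bar\a}$ and $C^{\bar\b}$ are distinct subsets of $\l$ (in fact of $\k$ for the relevant coordinates), so there are $|j(\l)|\geq\l$ many subsets of $\k$ in $V[G]$ and the interval collapses. Your variant with explicit interleaved collapses can certainly be made to work as well, but it is a different construction from the one the paper employs.

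Second, and more significantly for the architecture of the paper, your Step~3 takes the direct route: show $\PP_E$ is cone homogeneous and conclude $\text{HOD}^{V[G]}\subseteq V$. The paper explicitly remarks that this shortcut is available for Theorem~3.1, but deliberately does \emph{not} take it. Instead it defines a \emph{projected} forcing $\PP^\pi_E$ --- obtained by erasing all coordinates of $f$ except $f(\k)$ --- together with a Prikry-type projection $\pi:\PP_E\to\PP^\pi_E$. The projected forcing is $\k^+$-c.c.\ and has the Prikry property with $\k$-closed direct extension, hence preserves all cardinals, so $\l$ remains inaccessible in the intermediate model $V[G^\pi]$. The homogeneity lemma then says that whenever $\pi(p)=\pi(q)$ the cones $\PP_E\downarrow p$ and $\PP_E\downarrow q$ are isomorphic, which yields $\text{HOD}^{V[G]}\subseteq V[G^\pi]$ rather than $\subseteq V$. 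The paper chooses this longer route because it is precisely the template for the main Theorem~1.3, where the Radin-type forcing $\PP_{\bar E}$ is \emph{not} cone homogeneous over $V$ and the intermediate model $V[G^\pi]$ becomes genuinely indispensable. Your direct argument buys simplicity for this particular theorem; the paper's argument buys a rehearsal of the machinery required later.
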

The rest of this section is devoted to the proof of the above theorem.
We assume  the following hold in $V$:
\begin{enumerate}
\item $\text{GCH},$
\item $j: \text{V} \rightarrow \text{M}$ is an elementary embedding with $crit(j)=\k$ and $\text{M} \supseteq$ $^{<\lambda}\text{M},$
\item $\lambda>\k$ is strongly inaccessible.
\end{enumerate}
In  subsection 3.1, we use  Merimovich's ``supercompact extender based Prikry forcing'' $\PP_{E}$ \cite{mer4}, to find a generic extension $V[G]$, which  makes $\k$ singular of cofinality $\omega$, and collapses all cardinals in $(\k, \lambda)$ into $\k,$ while preserves $\lambda,$ so that $(\k^+)^{\text{V[G]}}=\lambda.$
Then in subsection 3.2, we present a cardinal preserving  forcing notion $\PP^\pi_{E}$, that we call it ``projected supercompact extender based Prikry forcing'', where in its generic extension, $\k$ becomes singular of countable cofinality and $\lambda>\k$ remains inaccessible, and next we show that there is a projection from  $\PP_{E}$ into  $\PP^\pi_{E}$, so that we can find a generic $G^\pi$ for $\PP^\pi_{E}$ with $\text{V}[\text{G}^\pi] \subseteq \text{V[G]}$. In subsection 3.5 we show that the resulting quotient forcing has enough homogeneity properties so that $\text{HOD}^{\text{V[G]}} \subseteq \text{V}[\text{G}^\pi],$ and from it we get the result.

\subsection{Supercompact extender based Prikry forcing}
In this subsection, we present Merimovich's ``supercompact extender based Prikry forcing''.
For each $\a<j(\gl)$ let
$\lambda_\a$ be minimal $\eta<\gl$ such that $\a < j(\eta),$
and let $E(\a) \subseteq P(\lambda)$ be defined by
\begin{center}
$A\in E(\a) \Leftrightarrow \a \in j(A).$
\end{center}
Note that each $E(\a)$ is a $\k$-complete ultrafilter on $\lambda$ and it has concentrated on $\lambda_\a$. Also let
\begin{center}
$i_\a: \text{V} \rightarrow \text{N}_\a \simeq \text{Ult(V}, E(\a)).$
\end{center}
Finally let
\begin{center}
$E=  \langle \langle E(\a): \a<j(\lambda)    \rangle, \langle  \pi_{\beta, \a}: \beta, \a< j(\gl), \a\in rnge(i_\beta)          \rangle \rangle$
\end{center}
be the extender derived from $j$, where $\pi_{\beta, \a}: \lambda \rightarrow \lambda$ is such that $j(\pi_{\beta, \a})(\beta)=\a$ (such a $\pi_{\beta, \a}$ exists as $\a \in rnge(i_\beta)$). Let $i: \text{V} \rightarrow \text{N }\simeq \text{Ult(V, E)}$ be the resulting extender embedding. We may assume that $j=i.$
\begin{definition}
Let $d\in [j(\lambda)]^{<\lambda}$ be such that $\k, |d| \in d.$ Then $\nu \in \text{OB(d)}$ iff:
\begin{enumerate}
\item $\nu: \dom(\nu) \rightarrow \lambda,$ where $\dom(\nu) \subseteq d,$
\item $\k, |d| \in \dom(\nu),$
\item $|\nu| \leq \nu(|d|),$
\item $\forall \a<\l$ $(j(\a)\in \dom(\nu) \Rightarrow \nu(j(\a))=\a ),$
\item $\a\in \dom(\nu) \Rightarrow \nu(\a) < \gl_\a,$
\item $\a < \beta$ in $\dom(\nu) \Rightarrow \nu(\a) < \nu(\beta).$
\end{enumerate}
Also for $\nu_0, \nu_1 \in \text{OB(d)},$ set $\nu_0 < \nu_1$ iff
\begin{enumerate}
\item [(6)]$\dom(\nu_0) \subseteq \dom(\nu_1),$

\item [(7)] For all $\a\in \dom(\nu_0)\setminus j[\gl], \nu_0(\a) < \nu_1(\a).$
\end{enumerate}
\end{definition}
We now define the forcing notion $\PP^*_E.$
\begin{definition}
$\PP^*_E$ consists of all functions $f: d \rightarrow \lambda^{<\omega}$, where  $d\in [j(\lambda)]^{<\lambda}$, $\k, |d| \in d,$ and  such that

$(1)$ For any $j(\a) \in d, f(j(\a))=\langle \a \rangle,$

$(2)$ For any $\a\in d\setminus j[\gl], $ there is some $k<\omega$ such that
\begin{center}
$f(\a)= \langle  f_0(\a), \dots, f_{k-1}(\a)               \rangle \subseteq \gl_\a $
\end{center}
$\hspace{0.7cm}$ is a finite increasing subsequence of $\gl_\a.$ For $f, g\in \PP^*_E,$
\begin{center}
$f \leq^*_{\PP^*_E} g \Leftrightarrow f \supseteq g.$
\end{center}
\end{definition}
\begin{remark}
$\langle \PP^*_E, \leq^*_{\PP^*_E} \rangle \approx Add(\lambda, |j(\lambda)|).$
\end{remark}
\begin{definition}
Assume $d\in [j(\lambda)]^{<\lambda}$ and $\k, |d| \in d.$
 Let $T \subseteq OB(d)^{<\xi} (1<\xi \leq \omega)$ and  $n<\omega.$ Then

 $(1)$ $Lev_n(T)=T \cap \text{OB(d)}^{n+1},$

 $(2)$ $\Suc_T(\langle \rangle) = \Lev_0(T),$

 $(3)$ $\Suc_T(\langle \nu_o, \dots, \nu_{n-1}         \rangle)=\{\mu\in OB(d): \langle \nu_o, \dots, \nu_{n-1}, \mu \rangle \in T    \}.$
\end{definition}
\begin{definition}
Assume $d\in [j(\lambda)]^{<\lambda}$ and $\k, |d| \in d.$
Let $T \subseteq OB(d)^{<\xi} (1<\xi \leq \omega)$. For $\langle \nu \rangle \in T,$ let
\begin{center}
$T_{\langle \nu \rangle}=\{  \langle \nu_o, \dots, \nu_{k-1} \rangle: k<\omega, \langle \nu, \nu_o, \dots, \nu_{k-1} \rangle \in T \}$
\end{center}
 and define by recursion for $\langle \nu_o, \dots, \nu_{n-1} \rangle \in T,$
\begin{center}
$T_{\langle \nu_o, \dots, \nu_{n-1} \rangle}= (T_{\langle \nu_o, \dots, \nu_{n-2} \rangle})_{\langle \nu_{n-1} \rangle}.$
\end{center}
\end{definition}
\begin{definition}
Assume $d\in [j(\lambda)]^{<\lambda}$ and $\k, |d| \in d.$
We define the measure $E(d)$ on $\text{OB(d)}$ by
\begin{center}
 $E(d)=\{ X \subseteq \text{OB(d)}: \text{mc(d)}\in j(X) \},$
\end{center}
where $\text{mc(d)}=\{\langle j(\a), \a \rangle : \a\in d   \}.$
\end{definition}
\begin{definition}
Assume $d\in [j(\lambda)]^{<\lambda}$ and $\k, |d| \in  d.$ Let $T \subseteq \text{OB(d)}^{<\omega}$ be a tree.
 $T$ is called an $E(d)$-tree, if
\begin{enumerate}
\item $\forall \langle \nu_0, \dots, \nu_{n-1} \rangle \in T$ $(\nu_0 < \dots < \nu_{n-1}),$
\item $\forall \langle \nu_0, \dots, \nu_{n-1} \rangle \in T$ $(\Suc_T(\langle \nu_0, \dots, \nu_{n-1} \rangle)\in E(d)).$
\end{enumerate}
\end{definition}
\begin{definition}
Assume $d\in [j(\lambda)]^{<\lambda}$ and  $A \subseteq \text{OB(d)}^{<\omega}.$ Then
\begin{center}
 $A \upharpoonright c =\{\langle \nu_0 \upharpoonright c, \dots \nu_{n-1} \upharpoonright c \rangle: n< \omega, \langle \nu_0, \dots, \nu_{n-1} \rangle \in A \}.$
 \end{center}
\end{definition}
\begin{remark}
For $f\in \PP^*_E,$ we use $\text{OB(f)}, E(f)$ and $\text{mc(f})$ to denote $\text{OB}(\dom(f)), E(\dom(f))$ and $\text{mc}(\dom(f))$ respectively.
\end{remark}
We are now ready to define our main forcing notion, $\PP_E.$
\begin{definition}
$p\in \PP_E$ iff $p=  \langle f^p, A^p \rangle$ where

$(1)$ $f^p \in \PP^*_E,$

$(2)$ $A^p$ is an $E(f^p)$-tree.
\end{definition}
\begin{definition}
Let $p, q\in \PP_E.$ Then $p \leq^* q$ ($p$ is a Prikry extension of $q$) iff:

$(1)$ $f^p \leq^*_{\PP^*_E} f^q,$

$(2)$ $A^p \upharpoonright \dom(f^q) \subseteq A^q.$
\end{definition}
\begin{definition}
Let $f\in \PP^*_E, \nu \in OB(f)$ and suppose $\nu(\k) > \max(f(\k)).$ Then $f_{ \langle \nu \rangle}\in \PP^*_E$ has the same domain as $f$ and
\begin{center}
 $f_{ \langle \nu \rangle}(\a) = \left\{ \begin{array}{l}
       f(\a)^{\frown} \langle \nu(\a) \rangle  \hspace{1.1cm} \text{ if } \a\in \dom(\nu), \nu(\a) > \max(f(\a)),\\
       f(\a)  \hspace{2.5cm} \text{Otherwise}.
 \end{array} \right.$
\end{center}
Given $\langle \nu_0, \dots, \nu_{n-1} \rangle \in OB(f)^n$ such that $\nu_0(\k) > \max(f(\k))$ and $v_0 < \dots < \nu_{n-1},$ define $f_{\langle \nu_0, \dots, \nu_{n-1} \rangle}$ by recursion as
\begin{center}
$f_{\langle \nu_0, \dots, \nu_{n-1} \rangle}=(f_{\langle \nu_0, \dots, \nu_{n-2} \rangle})_{\langle \nu_{n-1} \rangle}.$
\end{center}
Let $p\in \PP_E,$ and suppose $\langle \nu_0, \dots, \nu_{n-1} \rangle \in A^p$ is such that $\nu_0(\k) > \max(f^p(\k))$ and $v_0 < \dots < \nu_{n-1}.$ Then
\begin{center}
$p_{\langle \nu_0, \dots, \nu_{n-1} \rangle}=\langle f^p_{\langle \nu_0, \dots, \nu_{n-1} \rangle}, A^p_{\langle \nu_0, \dots, \nu_{n-1} \rangle} \rangle.$
\end{center}
\end{definition}
\begin{remark}
Whenever the notation $\langle \nu_0, \dots, \nu_{n-1} \rangle$ is used, where $\nu_0, \dots, \nu_{n-1} \in OB(f),$ it is implicitly assumed $\nu_0(\k) > \max(f(\k))$ and $v_0 < \dots < \nu_{n-1}.$
\end{remark}
\begin{definition}
Let $p, q\in \PP_E.$ Then
\begin{center}
$p \leq q \Leftrightarrow \exists \langle \nu_0, \dots, \nu_{n-1} \rangle \in A^q$ $(p \leq^* q_{\langle \nu_0, \dots, \nu_{n-1} \rangle}).$
\end{center}
\end{definition}
Let's state the main properties of the forcing notion $\PP_E.$ The proof can be found in \cite{mer4}.
\begin{theorem}
Let $\text{G}$ be $\PP_{E}$-generic over $\text{V}$. Then
\begin{enumerate}
\item $\langle \PP_E, \leq \rangle$ satisfies the $\l^+-c.c.,$

\item   $\langle \PP_E, \leq, \leq^* \rangle$ satisfies the Prikry property,

\item  $\langle \PP_E, \leq^* \rangle$ is $\k$-closed,

\item $cf^{\text{V[G]}}(\k)=\omega,$

\item All $\text{V}$-cardinals in the interval $(\k, \lambda)$ are collapsed,

\item $\lambda$ is preserved in $\text{V[G]}.$
\end{enumerate}
\end{theorem}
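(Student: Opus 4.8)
The plan is to verify the six clauses following the analysis of $\PP_E$ in \cite{mer4}; clauses $(1)$ and $(3)$ are the soft ones. For the $\l^+$-c.c.\ I would run a $\Delta$-system argument on the supports $d^p=\dom(f^p)$: since $\GCH$ holds and $\l$ is inaccessible, $\l^{<\l}=\l$, so any $\l^+$ conditions may be thinned to ones whose supports form a $\Delta$-system with root $r$, $|r|<\l$; and as there are only $\l$ many possible values of $f^p\upharpoonright r$, two of the surviving conditions $p,q$ agree there. Then $f^p\cup f^q\in\PP^*_E$ and $\{\,s : s\upharpoonright d^p\in A^p\ \text{and}\ s\upharpoonright d^q\in A^q\,\}$ is an $E(d^p\cup d^q)$-tree --- its successor sets are finite intersections of pullbacks, under the coordinate projections $\text{OB}(d^p\cup d^q)\to\text{OB}(d^p),\text{OB}(d^q)$ (which push $E(d^p\cup d^q)$ onto $E(d^p)$ and onto $E(d^q)$), of sets in $E(d^p),E(d^q)$, hence lie in $E(d^p\cup d^q)$ --- so $p$ and $q$ have a common extension. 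For $(3)$, given a $\leq^*$-decreasing $\langle p_\xi:\xi<\d\rangle$ with $\d<\k$, I would take $f=\bigcup_\xi f^{p_\xi}$ (still in $\PP^*_E$, since $\d<\k<\l$ and $\l$ is regular, so $\dom f$ has size $<\l$) and let $A$ consist of the $s$ with $s\upharpoonright\dom(f^{p_\xi})\in A^{p_\xi}$ for every $\xi$; each successor set of $A$ is an intersection of $<\k$ sets in the $\k$-complete measure $E(\dom f)$, so $A$ is an $E(\dom f)$-tree and $\langle f,A\rangle\leq^* p_\xi$ for all $\xi$.

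The heart of the theorem, and the step I expect to be the main obstacle, is the Prikry property $(2)$. Fix $p$ and a statement $\sigma$; the goal is $p^*\leq^* p$ deciding $\sigma$. The basic resource is the computation that for each $\b<\k$ and each support $d$ one has $\{\nu\in\text{OB}(d):\nu(\k)>\b\}\in E(d)$ (it asserts $\text{mc}(d)(j(\k))=\k>\b$ in $\Ult(\V,E)$), so successor sets of trees may be shrunk freely, and more generally the $\k$-completeness of the measures homogenizes finitely-valued colorings of successor sets. Using this together with the $\k$-closure from $(3)$, I would run an $\om$-step recursion --- at step $n$ treating all length-$n$ nodes at once and amalgamating, over an $E$-measure-one set of successors, the direct extensions one wishes to impose --- to produce a single $p^*\leq^* p$ such that for every node $s\in A^{p^*}$, if some $\leq^*$-extension of $p^*_s$ decides $\sigma$ then $p^*_s$ itself decides $\sigma$, with the same answer for $E$-almost every successor of any given node. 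A minimal-stem-length argument then forces $p^*$ itself to decide $\sigma$: otherwise pick $r\leq p^*$ deciding $\sigma$ with $r\leq^* p^*_s$ for a node $s$ of least length; $s$ is nonempty, and shrinking at the predecessor of $s$ to the $E$-large set of successors giving that answer yields a decision below a strictly shorter stem, contradicting minimality. The delicate point is the amalgamation step, and it is precisely what forces Merimovich's definition to carry the connecting maps $\pi_{\b,\a}$ and clause $(4)$ in the definition of $\text{OB}(d)$ (the requirement $\nu(j(\a))=\a$): these make the direct extensions chosen at distinct successors fit together into a single condition.

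Clause $(4)$ is then quick: $\k$ lies in every support, so by genericity $t_\k:=\bigcup\{f^p(\k):p\in G\}$ is an increasing $\om$-sequence (each one-step extension appends a single value $\nu(\k)$), and by the measure computation above, for each $\b<\k$ the set of conditions whose $\k$-value exceeds $\b$ is dense, so $t_\k$ is cofinal in $\k$ and $\cf^{\V[G]}(\k)=\om$. Since $\leq^*\subseteq\leq$ trivially, clauses $(2)$, $(3)$ and Lemma 2.4 give that $\PP_E$ adds no bounded subset of $\k$, so $\k$ stays a (strong limit) cardinal and the assertion makes sense.

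Finally, $(5)$ and $(6)$ record the behaviour of cardinals, and here the extender-specific analysis of \cite{mer4} is really used. Clause $(1)$ preserves all cardinals $\ge\l^+$. For the collapse of $(\k,\l)$: as $j$ is chosen with $j(\k)>\l$, one has $\l_\a=\k$ for every $\a\in(\k,\l)$, and from the generic Prikry sequences carried by these coordinates, coherently tied together by the maps $\pi_{\b,\a}$ encoded in the trees, one reads off in $\V[G]$ a surjection of $\k$ onto $\mu$ for every $\V$-cardinal $\mu\in(\k,\l)$, so the whole interval $(\k,\l)$ collapses to $\k$. And $\l$ is preserved: beyond the $\l^+$-c.c.\ one checks --- using the Prikry property and $\k$-closure to decide functions of length $<\k$, together with the fact that $\PP^*_E\approx Add(\l,|j(\l)|)$ is $<\l$-closed --- that no new cofinal sequence of length $<\l$ into $\l$ is added, so $\l$ remains regular; being above the collapsed interval it survives, and in fact $\l=(\k^+)^{\V[G]}$.
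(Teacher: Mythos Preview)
The paper does not prove this theorem at all: immediately before the statement it says ``The proof can be found in \cite{mer4}'' and moves on. So there is nothing to compare your argument against in the paper itself; your sketch is an attempt to reconstruct Merimovich's proof from \cite{mer4}, and as such it is broadly on target. Your outline of (1) via a $\Delta$-system on $\dom(f^p)$ and of (3) via unions of $f$'s and diagonal intersection of trees is standard and correct; your identification of the amalgamation step in the Prikry property as the crux, and of clause~(4) in the definition of $\text{OB}(d)$ (the ``$\nu(j(\alpha))=\alpha$'' requirement) together with the connecting maps as the mechanism that makes coordinatewise choices cohere, is exactly the point of Merimovich's argument. The computation $\lambda_\alpha=\kappa$ for $\alpha\in(\kappa,\lambda)$ in (5) is right (since $j(\eta)=\eta<\alpha$ for $\eta<\kappa$ while $j(\kappa)>\lambda>\alpha$), and the informal description of how the generic sequences produce a surjection is the correct picture, though in \cite{mer4} this is made precise via the function $\alpha\mapsto G^\alpha$. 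In short: your sketch is a faithful high-level summary of the proof the paper is citing; the paper itself simply defers to \cite{mer4}.
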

It follow that $\text{V}$ and $\text{V[G]}$ have the same bounded subsets of $\k,$ $cf^{\text{V[G]}}(\k)=\omega,$ and $(\k^+)^{\text{V[G]}}=\lambda.$

It is possible to show that the forcing notion $\PP_E$ has enough homogeneity properties to guarantee that $\text{HOD}^{\text{V[G]}} \subseteq \text{V},$ from which we can conclude that $(\k^+)^{\text{V[G]}}=\lambda$ is inaccessible in $\text{HOD}^{\text{V[G]}}.$ However, the forcing we define for our main theorem does not have this homogeneity property, and to prove the main theorem, we need some extra work to find some intermediate model, which is a cardinal preserving extension of the universe $\text{V}$, so that the tail construction is homogeneous. To motivate that construction,
we prove the fact $(\k^+)^{\text{V[G]}}=\lambda$ is inaccessible in $\text{HOD}^{\text{V[G]}}$  in a more complicated way, which is similar to the proof of the main theorem, and so can give us some motivation for that construction.

\subsection{Projected supercompact extender based Prikry forcing}
In this subsection we define our projected forcing.
\begin{definition}
$\PP^{*, \pi}_E$ consists of all functions $f: d \rightarrow \lambda^{<\omega}$, where $d\in [j(\lambda)]^{<\lambda}$ and $\k, |d| \in  d,$ such that
\begin{enumerate}
\item For some $k<\omega,$
$f(\k)= \langle  f_0(\k), \dots, f_{k-1}(\k)               \rangle \subseteq \k $ is a finite increasing subsequence
 of $\k.$

\item For all $\k< \a\in d, f(\a)=\langle 0 \rangle.$
\end{enumerate}
For $f, g\in \PP^{*, \pi}_E,$
\begin{center}
$f \leq^{*, \pi}_{\PP^{*, \pi}_E} g \Leftrightarrow f \supseteq g.$
\end{center}
\end{definition}
\begin{remark}
$\langle \PP^{*, \pi}_E,    \leq^{*, \pi}_{\PP^{*, \pi}_E}    \rangle$ is the trivial forcing notion.
\end{remark}
\begin{lemma}
$ \PP^{*, \pi}_E$ satisfies the $\k^+$-c.c.
\end{lemma}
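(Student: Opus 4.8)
The plan is to read off the compatibility relation of $\PP^{*,\pi}_E$ straight from the definition and then count ``types'' of conditions. The key observation is that a condition $f\in\PP^{*,\pi}_E$ carries almost no information beyond its domain $d$: every coordinate of $f$ other than $\k$ has a value prescribed by the coordinate itself (namely $f(\a)=\langle 0\rangle$ for $\k<\a\in d$, and $f(\a)=\langle\a\rangle$ for $\a<\k$ in $d$ by the convention inherited from $\PP^*_E$, since $j(\a)=\a$ there), so $f$ is completely determined by the pair $(d,f(\k))$, where $f(\k)$ is a finite increasing sequence of ordinals below $\k$.

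First I would use this to compute incompatibility. Since the ordering on $\PP^{*,\pi}_E$ is $f\leq^{*,\pi}g\iff f\supseteq g$, two conditions $f,g$ have a common extension exactly when $f\cup g$ is a function that can be completed to a condition, i.e. exactly when $f$ and $g$ agree on the common part $\dom f\cap\dom g$ of their domains. By the previous paragraph all coordinates except $\k$ agree automatically, so compatibility of $f$ and $g$ reduces to $f(\k)=g(\k)$. For the nontrivial direction one exhibits a common extension when $f(\k)=g(\k)$: take $f\cup g$, enlarge its domain to some $e\in[j(\lambda)]^{<\lambda}$ with $\dom f\cup\dom g\subseteq e$ and $\k,|e|\in e$ — possible since $\lambda$ is regular, so $|\dom f\cup\dom g|<\lambda$ — and fill in the prescribed values on $e\setminus(\dom f\cup\dom g)$.

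Granting this, an antichain in $\PP^{*,\pi}_E$ is simply a family of conditions with pairwise distinct values $f(\k)$. Since there are only $|{}^{<\omega}\k|=\k$ many finite increasing sequences of ordinals below $\k$, every antichain has size at most $\k$, and the $\k^+$-c.c. follows.

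I do not expect a genuine obstacle. The only delicate point is the bookkeeping in the first step: one must be certain that \emph{every} coordinate of a condition other than $\k$ is pinned down, for if coordinates below $\k$ were left free one could manufacture an antichain of size $2^\k>\k$, so this has to be part of the intended reading of the definition. The remaining input is the trivial arithmetic fact $|{}^{<\omega}\k|=\k$. In spirit the lemma refines the preceding remark that $\langle\PP^{*,\pi}_E,\leq^{*,\pi}\rangle$ is trivial: it is \emph{not} $\k$-c.c., since the conditions with a fixed domain and varying value at $\k$ form an antichain of size $\k$, but it is $\k^+$-c.c., and that is the form in which the chain condition will be needed once $\PP^{*,\pi}_E$ is amalgamated with the tree part into the full projected forcing.
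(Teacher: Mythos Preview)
Your argument is correct and follows exactly the line the paper takes: the paper's proof consists of the single observation that if $f(\kappa)=g(\kappa)$ then $f$ and $g$ are compatible, together with the count $|{}^{<\omega}\kappa|=\kappa$. You supply more detail (in particular the explicit common extension and the remark that coordinates other than $\kappa$ are forced), but the strategy is identical.
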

\begin{proof}
This follows from the fact that for $f, g \in  \PP^{*, \pi}_E,$ if $f(\k)=g(\k),$ then $f$ and $g$ are compatible, and there are only $\k$ possibilities for the choice of $f(\k)$'s.
\end{proof}
\begin{definition}
$\PP^\pi_E$ consists of all pair $p= \langle f^p, A^p \rangle$, where

$(1)$ $f^p \in \PP^{*, \pi}_E,$

$(2)$ $A^p$ is an $E(f^p)$-tree.
\end{definition}
\begin{definition}
Let $p, q\in \PP^\pi_E.$ Then $p \leq^{*,\pi} q$ ($p$ is a Prikry extension of $q$) iff:

$(1)$ $f^p \leq^{*,\pi}_{\PP^{*,\pi}_E} f^q,$

$(2)$ $A^p \upharpoonright \dom(f^q) \subseteq A^q.$
\end{definition}
\begin{definition}
Let $f\in \PP^{*,\pi}_E, \nu \in OB(f)$ and suppose $\nu(\k) > \max(f(\k)).$ Then $f_{ \langle \nu \rangle}\in \PP^{*,\pi}_E$ has the same domain as $f$ and
\begin{center}
 $f_{ \langle \nu \rangle}(\a) = \left\{ \begin{array}{l}
       f(\k)^{\frown} \langle \nu(\k) \rangle  \hspace{1.1cm} \text{ if } \a=\k,\\
       \langle 0 \rangle  \hspace{2.7cm} \text{}  Otherwise.
 \end{array} \right.$
\end{center}
Given $\langle \nu_0, \dots, \nu_{n-1} \rangle \in OB(f)^n$ such that $\nu_0(\k) > \max(f(\k))$ and $v_0 < \dots < \nu_{n-1},$ define $f_{\langle \nu_0, \dots, \nu_{n-1} \rangle}$ by recursion as
\begin{center}
$f_{\langle \nu_0, \dots, \nu_{n-1} \rangle}=(f_{\langle \nu_0, \dots, \nu_{n-2} \rangle})_{\langle \nu_{n-1} \rangle}.$
\end{center}
Let $p\in \PP^\pi_E,$ and suppose $\langle \nu_0, \dots, \nu_{n-1} \rangle \in A^p$ is such that $\nu_0(\k) > \max(f^p(\k))$ and $v_0 < \dots < \nu_{n-1}.$ Then
\begin{center}
$p_{\langle \nu_0, \dots, \nu_{n-1} \rangle}=\langle f^p_{\langle \nu_0, \dots, \nu_{n-1} \rangle}, A^p_{\langle \nu_0, \dots, \nu_{n-1} \rangle} \rangle.$
\end{center}
\end{definition}
\begin{definition}
Let $p, q\in \PP^\pi_E.$ Then
\begin{center}
$p \leq^\pi q \Leftrightarrow \exists \langle \nu_0, \dots, \nu_{n-1} \rangle \in A^q$ $(p \leq^{*,\pi} q_{\langle \nu_0, \dots, \nu_{n-1} \rangle}).$
\end{center}
\end{definition}

\subsection{Projecting $\PP_E$ onto $\PP^{\pi}_E$ }
In this subsection, we show that there is a projection from $\PP_E$ into $\PP^{\pi}_E$. For $f\in \PP^*_E,$ let $f^\pi$ be defined as follows: $\dom(f^\pi)=\dom(f),$ and for $\a\in \dom(f^\pi)$
\begin{center}
 $f^\pi(\a) = \left\{ \begin{array}{l}
       f(\a)   \hspace{1.1cm} \text{ if } \a=\k,\\
       \langle 0 \rangle  \hspace{1.45cm} \text{}  Otherwise.
 \end{array} \right.$
\end{center}
It is clear that $f^\pi \in \PP^{*, \pi}_E.$ Now define $\pi: \PP_E \rightarrow \PP^\pi_E$ by
\begin{center}
$\pi(\langle f, A \rangle) = \langle f^\pi, A    \rangle.$
\end{center}
$\pi$ is well-defined, and it is clear that it is order preserving with respect to both $\leq$ and $\leq^*$ relations, in the sense that
\begin{center}
$p \leq^* q \Rightarrow \pi(p) \leq^{*,\pi} \pi(q),$
\end{center}
and
\begin{center}
$p \leq q \Rightarrow \pi(p) \leq^\pi \pi(q).$
\end{center}
\begin{theorem}
$\pi$ is a projection of Prikry type.
\end{theorem}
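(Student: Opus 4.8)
The plan is to verify the three clauses of Definition 2.5 (projection of Prikry type) for the map $\pi(\langle f, A\rangle)=\langle f^\pi, A\rangle$. Clauses (1) and (2) — that $\pi$ is an order-preserving projection for $\leq$ and that it preserves $\leq^*$ — have essentially been checked in the discussion preceding the statement, so the bulk of the work lies in establishing the ``lifting'' condition: given $p=\langle f,A\rangle\in\PP_E$ and $q=\langle g,B\rangle\in\PP^\pi_E$ with $q\le^\pi\pi(p)$, I must produce $p^*\le p$ in $\PP_E$ with $\pi(p^*)\le^{*,\pi}q$.

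First I would unwind what $q\le^\pi\pi(p)$ means. By Definition 2.20 there is a sequence $\langle\nu_0,\dots,\nu_{n-1}\rangle\in A$ (recall $A^{\pi(p)}=A$) with $q\le^{*,\pi}\pi(p)_{\langle\nu_0,\dots,\nu_{n-1}\rangle}$. Here the sequence lives in the tree $A$ of $p$, but since $\pi$ only alters the stem-function and leaves the tree untouched, the same sequence is available on the $\PP_E$ side. My candidate is $p^*:=p_{\langle\nu_0,\dots,\nu_{n-1}\rangle}$, possibly shrunk further: I take $f^{p^*}=f^p_{\langle\nu_0,\dots,\nu_{n-1}\rangle}$ and $A^{p^*}=A^p_{\langle\nu_0,\dots,\nu_{n-1}\rangle}$. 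By Definition 2.15 this is a legitimate element of $\PP_E$, and by definition of $\le$ (Definition 2.16) we have $p^*\le p$. It then remains to see that $\pi(p^*)\le^{*,\pi}q$, i.e. $f^{\pi(p^*)}\supseteq g$ and the tree of $\pi(p^*)$ restricts correctly under $\dom(g)$; comparing Definition 2.14 (how $f_{\langle\nu\rangle}$ acts in $\PP^*_E$) with Definition 2.19 (how it acts in $\PP^{*,\pi}_E$) shows that $f^p_{\langle\vec\nu\rangle}$ composed with the $\pi$-operation agrees coordinate-by-coordinate with $(f^\pi)_{\langle\vec\nu\rangle}$: on $\k$ both append $\langle\nu_0(\k),\dots,\nu_{n-1}(\k)\rangle$, and above $\k$ both are constantly $\langle 0\rangle$. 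So $\pi(p_{\langle\vec\nu\rangle})=\pi(p)_{\langle\vec\nu\rangle}$, and since $q\le^{*,\pi}\pi(p)_{\langle\vec\nu\rangle}$ we may simply shrink $A^{p^*}$ so that its image under $\pi$ restricted to $\dom(g)$ sits inside $B$ — which is possible because $g$'s domain is a set of size $<\lambda$ and the relevant trees are $E(d)$-trees, so intersecting measure-one sets keeps us in $\PP_E$.

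The key structural fact making all of this go through is the commutation $\pi\circ(\,\cdot\,)_{\langle\vec\nu\rangle}=(\,\cdot\,)_{\langle\vec\nu\rangle}\circ\pi$ between the stem-extension operations on the two posets; I would isolate and prove this as the crux of the argument, since once it is in hand the lifting property reduces to the already-known lifting property of the identity-on-trees together with the triviality of $\PP^{*,\pi}_E$ (Remark 2.17). I expect the main obstacle to be bookkeeping with domains: the sequence $\langle\nu_0,\dots,\nu_{n-1}\rangle$ lies in $A$, an $E(f^p)$-tree over $\mathrm{OB}(f^p)$, while $q$ has its own (smaller) domain $d^g$, and one must check that the restriction operations $A\restriction c$ of Definition 2.12 interact correctly with $\pi$ — but this is exactly parallel to the standard verification that extender-based Prikry forcing has the Prikry property, so no genuinely new difficulty arises. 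Finally, I note that once $\pi$ is shown to be a projection of Prikry type, Lemma 2.6 immediately gives that $\langle\PP^\pi_E,\le^\pi,\le^{*,\pi}\rangle$ is of Prikry type, and Lemma 2.2 gives the generic $G^\pi$ with $V[G^\pi]\subseteq V[G]$ promised in the outline of subsection 3.2.
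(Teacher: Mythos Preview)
Your approach and the commutation observation $\pi(p_{\langle\vec\nu\rangle})=\pi(p)_{\langle\vec\nu\rangle}$ are exactly the right ideas, and they match the paper's strategy. However, there is a gap in your choice of $p^*$. Your candidate $p^*=p_{\langle\vec\nu\rangle}$ has $\dom(f^{p^*})=\dom(f)$, whereas to obtain $\pi(p^*)\le^{*,\pi}q$ you need $(f^{p^*})^\pi\supseteq g$, in particular $\dom(f^{p^*})\supseteq\dom(g)$. From $q\le^{*,\pi}\pi(p)_{\langle\vec\nu\rangle}$ you only get $\dom(g)\supseteq\dom(f)$, and this containment may well be strict; in that case your $p^*$ fails the function-part requirement of $\le^{*,\pi}$. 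Shrinking the tree cannot repair this: $A^{p^*}$ is a tree over $\dom(f)$, so ``restricting it to $\dom(g)$'' is not even meaningful when $\dom(g)\supsetneq\dom(f)$.

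The paper's remedy is to \emph{enlarge} the domain rather than shrink the tree. In your notation it sets
\[
f^{p^*}=f_{\langle\vec\nu\rangle}\cup g\restriction\bigl(\dom(g)\setminus\dom(f)\bigr),\qquad A^{p^*}=B.
\]
The new coordinates all receive the value $\langle 0\rangle$ (since $g\in\PP^{*,\pi}_E$), so $f^{p^*}\in\PP^*_E$; and $B$ is already an $E(\dom(g))$-tree, so $p^*=\langle f^{p^*},B\rangle\in\PP_E$. One then has $p^*\le^* p_{\langle\vec\nu\rangle}\le p$, and $(f^{p^*})^\pi=g$ with $A^{\pi(p^*)}=B$, so in fact $\pi(p^*)=q$. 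With this correction your outline goes through and coincides with the paper's proof.
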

\begin{proof}
Let $p=\langle f, A \rangle \in \PP_E$, $p^*=\langle f^*, A^* \rangle \in \PP^\pi_E$ and $p^* \leq^\pi \pi(p)=\langle f^\pi, A \rangle.$
Let $\langle \nu_0, \dots, \nu_{n-1} \rangle \in A$ be such that $p^* \leq^{*,\pi} \pi(p)_{\langle \nu_0, \dots, \nu_{n-1} \rangle}.$ Let $q= \langle g, B  \rangle$ where
\begin{itemize}
\item $g=f_{\langle \nu_0, \dots, \nu_{n-1} \rangle} \cup f^* \upharpoonright (\dom(f^*)\setminus \dom(f)),$
\item $B= A^*.$
\end{itemize}
Clearly $q\leq p$ (as  $q \leq^* p_{\langle \nu_0, \dots, \nu_{n-1} \rangle}$) and  $\pi(q)=\langle g^\pi, B \rangle \leq^{*, \pi} p^*,$ which gives the result.
\end{proof}

\subsection{More on $\PP^\pi_E$}
Let's state the main properties of the forcing notion $\PP^\pi_E.$
\begin{lemma}
$\PP^\pi_E$ satisfies the $\k^+$-c.c.
\end{lemma}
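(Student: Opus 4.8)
The plan is to follow the proof that $\langle\PP^{*,\pi}_E,\leq^{*,\pi}_{\PP^{*,\pi}_E}\rangle$ has the $\k^+$-c.c., the only new ingredient being an easy amalgamation of the tree parts of conditions. The key point is that a condition $p=\langle f^p,A^p\rangle\in\PP^\pi_E$ carries information only through the value $f^p(\k)$ and the tree $A^p$, since for $\a\in\dom(f^p)\setminus\{\k\}$ one always has $f^p(\a)=\langle 0\rangle$. So I would begin by assuming toward a contradiction that $\{p_i:i<\k^+\}\subseteq\PP^\pi_E$ is an antichain, with $p_i=\langle f^{p_i},A^{p_i}\rangle$ and $e_i=\dom(f^{p_i})$. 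Each $f^{p_i}(\k)$ is a finite increasing sequence of ordinals below $\k$, and since $\k$ is inaccessible there are only $\k$ possibilities for it; so by the pigeonhole principle I may fix $i\ne j$ with $f^{p_i}(\k)=f^{p_j}(\k)$, and it suffices to show that $p_i$ and $p_j$ are compatible.

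To produce a common extension, let $d=e_i\cup e_j$ (enlarged, if need be, so that $\k,|d|\in d$) and define $g\colon d\to\lambda^{<\omega}$ by $g(\k)=f^{p_i}(\k)=f^{p_j}(\k)$ and $g(\a)=\langle 0\rangle$ for $\a\in d\setminus\{\k\}$; then $g\in\PP^{*,\pi}_E$ with $g\supseteq f^{p_i}$ and $g\supseteq f^{p_j}$. Next I would invoke the standard fact about these measures, established in \cite{mer4}, that for $d'\subseteq d$ the restriction map $\nu\mapsto\nu\upharpoonright d'$ carries $E(d)$ onto $E(d')$ — because $\text{mc}(d)\upharpoonright d'=\text{mc}(d')$ — so that the preimage under it of a set in $E(d')$ lies in $E(d)$; this lets one pull an $E(e_i)$-tree back to an $E(g)$-tree. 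Concretely, set
\[
B_i=\{\langle\nu_0,\dots,\nu_{n-1}\rangle\in\text{OB}(g)^{<\omega}:\ \nu_0<\dots<\nu_{n-1},\ \langle\nu_0\upharpoonright e_i,\dots,\nu_{n-1}\upharpoonright e_i\rangle\in A^{p_i}\}
\]
and define $B_j$ analogously; both are $E(g)$-trees, with $B_i\upharpoonright e_i\subseteq A^{p_i}$ and $B_j\upharpoonright e_j\subseteq A^{p_j}$. Since $E(g)$ is a filter, the node-wise intersection $B=B_i\cap B_j$ is again an $E(g)$-tree, so $q=\langle g,B\rangle\in\PP^\pi_E$; and $q\leq^{*,\pi}p_i$ (because $g\supseteq f^{p_i}$ and $B\upharpoonright e_i\subseteq A^{p_i}$) and likewise $q\leq^{*,\pi}p_j$, so $q$ witnesses the compatibility of $p_i$ and $p_j$, contradicting the choice of the antichain.

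The one place where a little care is called for is the tree amalgamation across the two different domains $e_i,e_j$, i.e.\ the assertion that the restriction maps between the measures $E(d)$ and $E(d')$ for $d'\subseteq d$ respect the tree structure (in particular that the successor sets of a pullback stay in the measure); but this is exactly the kind of verification already carried out in \cite{mer4} in the course of analysing $\PP_E$, and requires no new idea. Everything else — counting the possible stems $f^{p_i}(\k)$, and checking that $\langle g,B\rangle$ satisfies the clauses defining $\PP^\pi_E$ — is routine.
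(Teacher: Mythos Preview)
Your proposal is correct and follows essentially the same approach as the paper: the paper's proof simply cites Lemma~3.19 (the $\kappa^+$-c.c.\ for $\PP^{*,\pi}_E$, proved via the pigeonhole argument on $f(\kappa)$ that you reproduce) together with the unproved ``fact'' that compatibility of $f^p,f^q$ in $\PP^{*,\pi}_E$ implies compatibility of $p,q$ in $\PP^\pi_E$. Your argument is the same in outline but spells out this last step explicitly, carrying out the tree amalgamation via pullback along the restriction maps and intersection; this is exactly the routine verification the paper suppresses.
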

\begin{proof}
This follows from Lemma 3.19, and the fact that for $p, q \in \PP^\pi_E,$ if $f^p, f^q$ are compatible in $\PP^{*, \pi}_E,$ then $p, q$ are compatible in $\PP^\pi_E.$
\end{proof}
\begin{theorem}
Let $H$ be $\PP^\pi_E$-generic over $\text{V}$. Then
\begin{enumerate}

\item  $\langle \PP^\pi_E, \leq^* \rangle$ is $\k$-closed,
\item $cf^{V[H]}(\k)=\omega,$
\item   $\langle \PP^\pi_E, \leq, \leq^* \rangle$ satisfies the Prikry property.
\end{enumerate}
\end{theorem}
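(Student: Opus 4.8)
The plan is to treat the three clauses more or less separately, leaning on the machinery of Section~2 and on the properties of $\PP_E$ recorded in Theorem~3.16. Throughout, $\leq^*$ and $\leq$ for $\PP^\pi_E$ mean the orders $\leq^{*,\pi}$ and $\leq^\pi$ of Definitions~3.21 and~3.23.

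For clause~(1), I would simply rerun the proof that $\langle\PP_E,\leq^*\rangle$ is $\k$-closed (Theorem~3.16(3)); in the projected forcing it only gets easier. Suppose $\langle p_i=\langle f^{p_i},A^{p_i}\rangle : i<\delta\rangle$ is $\leq^{*,\pi}$-decreasing with $\delta<\k$. Since the pure order on $\PP^{*,\pi}_E$ is reverse inclusion of functions, the stems $f^{p_i}$ only grow in their domains while their values stay fixed; in particular $f^{p_i}(\k)=f^{p_0}(\k)$ for all $i$. Put $d=\bigcup_{i<\delta}\dom(f^{p_i})$ and $f=\bigcup_{i<\delta}f^{p_i}$; as $\delta<\k<\l=\cf(\l)$ and each $\dom(f^{p_i})$ has size $<\l$, one checks that $f\in\PP^{*,\pi}_E$. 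For the tree, use that the restriction maps $\text{OB}(f)\to\text{OB}(f^{p_i})$ carry $E(f)$ to $E(f^{p_i})$, so each $A^{p_i}$ pulls back to an $E(f)$-tree, and that $E(f)$ is $\k$-complete because $\crit(j)=\k$; hence the node-by-node intersection $A$ of these $\delta$ many pullbacks is again an $E(f)$-tree, and $\langle f,A\rangle\leq^{*,\pi}p_i$ for all $i$. So $\langle f,A\rangle$ is the required lower bound.

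Clause~(3) is immediate: by Theorem~3.24 the map $\pi\colon\PP_E\to\PP^\pi_E$ is a projection of Prikry type, and by Theorem~3.16(2) the system $\langle\PP_E,\leq,\leq^*\rangle$ satisfies the Prikry property; so Lemma~2.6 applies and yields that $\langle\PP^\pi_E,\leq^\pi,\leq^{*,\pi}\rangle$ is of Prikry type, in particular has the Prikry property. For clause~(2), first note that $\k$ is still a cardinal in $V[H]$: by clauses~(1) and~(3) together with Lemma~2.4, forcing with $\PP^\pi_E$ adds no new bounded subsets of $\k$, so every cardinal below $\k$ is preserved, and since $\k$ is a limit cardinal in $V$ it stays one in $V[H]$. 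To compute its cofinality, set $C=\bigcup_{p\in H}f^p(\k)$. Any two conditions of $H$ have $\k$-stems that are comparable end-extensions (they become so below a common extension), so $C$ is a strictly increasing sequence of ordinals $<\k$; it has order type $\omega$ and is cofinal in $\k$ once one knows that $D_\gamma=\{p\in\PP^\pi_E:\max(f^p(\k))>\gamma\}$ is dense for each $\gamma<\k$. But the push-forward of $E(f^p)$ under $\nu\mapsto\nu(\k)$ is a nonprincipal $\k$-complete ultrafilter on $\k$ (in fact the normal measure derived from $j$), so $\{\nu\in\text{OB}(f^p):\nu(\k)>\gamma\}\in E(f^p)$; shrinking the first level of $A^p$ into this set gives a $\leq^{*,\pi}$-extension all of whose one-step extensions $p_{\langle\nu\rangle}$ lie in $D_\gamma$. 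Thus $C$ witnesses $\cf^{V[H]}(\k)=\omega$.

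The fussiest point is the bookkeeping in clause~(1): one must check that the amalgamated stem $f$ genuinely lies in $\PP^{*,\pi}_E$ (so the requirement $\k,|d|\in d$ survives taking the union) and that the restriction maps between the $\text{OB}(\cdot)$-spaces really commute with the extender-derived measures, so that pullbacks of $E(f^{p_i})$-trees are honest $E(f)$-trees. These are precisely the points already settled for $\PP_E$ in \cite{mer4}, and they carry over to $\PP^\pi_E$ unchanged; with them in hand, clauses~(2) and~(3) are routine.
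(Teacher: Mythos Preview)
Your proposal is correct and follows the paper's approach: the paper's own proof says only that (1) and (2) are clear and that (3) follows from Lemma~2.6 and Theorem~3.24, which is exactly your argument for (3); for (1) and (2) you have simply spelled out the details the paper leaves implicit.
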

\begin{proof}
$(1)$ and $(2)$ are clear, and $(3)$ follows from Lemma 2.6 and Theorem 3.24.
\end{proof}
It follows that $\text{V}$ and $\text{V[H]}$ have the same cardinals and the same bounded subsets of $\k,$
and $\lambda$ remains an inaccessible cardinal in $\text{V[H]}.$

\subsection{Homogeneity of the quotient forcing}
Let $\text{G}$ be $\PP_E$-generic over $\text{V}$, and let $G^\pi$ be the filter generated by $\pi[\text{G}].$ It follows from Theorem 2.27 that $\text{G}^\pi$ is $\PP^\pi_E$-generic over $\text{V}$ and that $\text{V} \subseteq \text{V[G}^\pi] \subseteq \text{V[G]}.$ By standard forcing theorems, $\text{V[G]}$ is itself a forcing extension of $\text{V[G}^\pi].$ We show that this forcing has enough homogeneity properties, which guarantees that $\text{HOD}^{\text{V[G]}} \subseteq \text{V[G}^\pi].$

For a forcing notion $\PP$ and a condition $p\in \PP,$ set $\PP \downarrow p =\{q\in \PP: q \leq p   \}$ consists of all extensions of $p$ in $\PP.$ The homogeneity of our quotient forcing follows from the next theorem.
\begin{lemma} (Homogeneity lemma)
For all $p, q\in \PP_E,$ if $\pi(p)=\pi(q)$, then
\begin{center}
$\PP_E \downarrow p \simeq \PP_E \downarrow q$.
\end{center}
In particular, if $p\Vdash$``$\phi(\a, \vec{\gamma})$'', where $\a, \vec{\gamma}$ are ordinals, then it is not the case that $q\Vdash$``$\neg\phi(\a, \vec{\gamma})$''.
\end{lemma}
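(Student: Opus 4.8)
The plan is to construct, for any two conditions $p,q\in\PP_E$ with $\pi(p)=\pi(q)$, an explicit isomorphism $\sigma\colon \PP_E\downarrow p \to \PP_E\downarrow q$, and then derive the ``in particular'' clause by a standard mutual-genericity argument. The hypothesis $\pi(p)=\pi(q)$ means, writing $p=\langle f,A\rangle$ and $q=\langle g,B\rangle$, that $f^\pi=g^\pi$ and $A=B$. Unpacking the definition of $f\mapsto f^\pi$, this says $f(\k)=g(\k)$, $\dom(f)=\dom(g)$, and $f,g$ agree modulo the coordinates in $\dom(f)\setminus\{\k\}$, where both take values that are finite increasing sequences in $\gl_\a$ (of the same length, since the length is controlled by $A=B$ being an $E(f^p)=E(f^q)$-tree, hence $\dom(f)=\dom(g)$ forces the stem lengths to match). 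So $p$ and $q$ differ only in the ``non-$\k$ part of the stem'': there is, for each $\a\in\dom(f)\setminus\{\k\}$, a bijective correspondence between the possible finite increasing sequences below $\gl_\a$, and $f,g$ sit at corresponding places.

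First I would make this precise: define, for a fixed common domain $d$ and a fixed common $\k$-stem, the ``relabeling'' map that sends a condition $r=\langle f^r,A^r\rangle\le p$ to the condition obtained by keeping $f^r(\k)$, keeping the tree part $A^r$ (suitably transported), and replacing, on each coordinate $\a\in d\setminus\{\k\}$, the portion of $f^r(\a)$ lying below $\max(f(\a))$ by the corresponding portion of $g(\a)$ — leaving the ``new'' entries added beyond the stem of $p$ untouched. Since extensions in $\PP_E$ only lengthen stems via one-step extensions $f\mapsto f_{\langle\nu\rangle}$ by ordinal objects $\nu\in\OB(f)$, and the ordinal-object structure (conditions (1)--(7) of Definition 3.7, the measures $E(d)$, the trees $A^p$) depends only on $d$ and not on the stem values, this relabeling is well-defined, order preserving in both $\le$ and $\le^*$, and has an inverse of the same form (swap the roles of $f$ and $g$). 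Thus $\sigma$ is an isomorphism of $\PP_E\downarrow p$ onto $\PP_E\downarrow q$ commuting with $\pi$ (i.e.\ $\pi\circ\sigma=\pi$ on $\PP_E\downarrow p$), which is the first assertion.

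For the ``in particular'' clause, suppose toward a contradiction that $p\Vdash\phi(\a,\vec\gamma)$ and $q\Vdash\neg\phi(\a,\vec\gamma)$. Take $G$ to be $\PP_E$-generic with $p\in G$. Then $\sigma[G\cap(\PP_E\downarrow p)]$ generates a filter $G'$ on $\PP_E\downarrow q$, hence a generic $\PP_E$-generic filter containing $q$; since $\phi(\a,\vec\gamma)$ is a statement about ordinals, its truth in $\V[G]$ versus $\V[G']$ is forced by $p$ and $q$ respectively, giving $\V[G]\models\phi(\a,\vec\gamma)$ and $\V[G']\models\neg\phi(\a,\vec\gamma)$ — but an isomorphism of the cones preserves all such statements, contradiction. (Equivalently: if $p$ and $q$ forced opposite things, find a common $\pi$-value and push $p$'s decision through $\sigma$ to contradict $q$'s.)

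\textbf{The main obstacle} I anticipate is verifying that the relabeling genuinely respects the tree/measure structure — i.e.\ that transporting $A^r$ along the coordinate bijections again yields an $E(d)$-tree with the right $\Suc_T$ sets in $E(d)$, and that the one-step extension operation $r\mapsto r_{\langle\nu_0,\dots,\nu_{n-1}\rangle}$ commutes with $\sigma$. This is where one must use that $E(d)=\{X\subseteq\OB(d):\mc(d)\in j(X)\}$ and the conditions defining $\OB(d)$ refer only to $d$, $\k$, $|d|$, and the fixed maps $\gl_\a$, $\pi_{\beta,\a}$ — never to the stem values of a condition — so the bijection on finite increasing sequences below each $\gl_\a$ does not disturb membership in $E(d)$ or the defining inequalities (3), (5), (6), (7). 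Once this bookkeeping is in place the isomorphism is immediate, and the homogeneity consequence for $\HOD$ follows by the usual argument (a set of ordinals definable in $\V[G]$ from ordinal parameters is decided, by the Homogeneity lemma, already in $\V[G^\pi]$, since $\pi(p)$ determines the truth value).
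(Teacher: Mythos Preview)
Your approach is essentially the paper's: given $p^*\le p$, pick the witnessing $\langle\nu_0,\dots,\nu_{n-1}\rangle\in A$ with $p^*\le^* p_{\langle\nu_0,\dots,\nu_{n-1}\rangle}$ and send $p^*$ to the condition with the same tree $A^{p^*}$ and with stem equal to $f^q_{\langle\nu_0,\dots,\nu_{n-1}\rangle}(\bar\a)$ on $\Delta=\dom(f^p)=\dom(f^q)$ and to $f^{p^*}(\bar\a)$ outside $\Delta$; the inverse swaps the roles of $p$ and $q$.

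Two points of confusion are worth correcting. First, your claim that the non-$\k$ stems $f(\a)$ and $g(\a)$ must have the same length is neither true nor needed: the projection $\pi$ sends every $f(\a)$ with $\a\ne\k$ to $\langle 0\rangle$, so $\pi(p)=\pi(q)$ imposes no length constraint there, and the construction above works regardless --- you simply replace the entire $\Delta$-part $f^p_{\langle\nu_0,\dots,\nu_{n-1}\rangle}$ by $f^q_{\langle\nu_0,\dots,\nu_{n-1}\rangle}$ rather than matching up initial segments coordinate by coordinate. Second, your anticipated ``main obstacle'' of transporting the tree $A^{p^*}$ along coordinate bijections is a non-issue: the notions $OB(d)$, $E(d)$, and $d$-tree depend only on $d=\dom(f^{p^*})$, never on stem values, so $A^{p^*}$ is already an $E(f^{\Phi(p^*)})$-tree with no modification at all (exactly as the paper does, setting $\Phi(p^*)=\langle\Phi(f^{p^*}),A^{p^*}\rangle$). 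Once you drop these two detours, your map is precisely the paper's $\Phi$.
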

\begin{remark}
In fact, it suffices $\pi(p)$ and $\pi(q)$ to be compatible in the $\leq^{*,\pi}$-ordering relation.
\end{remark}
\begin{proof}
Since $\pi(p)=\pi(q)$, we have $\dom(f^p)=\dom(f^q)=\Delta$, $f^p(\k) =f^q(\k)$ and $A^p=A^q=A.$ Let $p^* \leq p,$ and find $\langle \nu_0, \dots, \nu_{n-1} \rangle \in A$ such that $p^* \leq^* p_{\langle \nu_0, \dots, \nu_{n-1} \rangle}.$ Let $\Phi(f^{p^*}): \dom(p^*) \rightarrow \lambda^{<\omega}$ be defined as follows:
\begin{center}
 $\Phi(f^{p^*})(\a) = \left\{ \begin{array}{l}
       f^q_{\langle \nu_0, \dots, \nu_{n-1} \rangle}(\a)  \hspace{1.1cm} \text{ if } \a\in \Delta,\\
       p^*(\a)  \hspace{2.7cm} \text{}  \a\in \dom(p^*)\setminus \Delta
 \end{array} \right.$
\end{center}
It is clear that $\Phi(f^{p^*})\in \PP^{*}_E$ and that $\Phi(f^{p^*}) \leq^*_{\PP^*_E} f^q.$
 Finally set
\begin{center}
$\Phi(p^*)=\langle \Phi(f^{p^*}), A^{p^*} \rangle.$
\end{center}
Trivially $\Phi(p^*) \leq q,$ and so $ \Phi(f^{p^*}) \in \PP_E \downarrow q.$ It is also easily seen that
\begin{center}
$\Phi: \PP_E \downarrow p \rightarrow \PP_E \downarrow q$
\end{center}
is an isomorphism. Let us just define its converse. So let  $q^* \leq q,$ and find $\langle \gamma_0, \dots, \gamma_{m-1} \rangle \in A$ such that $q^* \leq^* q_{\langle \gamma_0, \dots, \gamma_{m-1} \rangle}.$ Let $\Psi(f^{q^*}): \dom(q^*) \rightarrow \lambda^{<\omega}$ be defined as follows:
\begin{center}
 $\Psi(f^{q^*})(\a) = \left\{ \begin{array}{l}
       f^p_{\langle \gamma_0, \dots, \gamma_{m-1} \rangle}(\a)  \hspace{1.1cm} \text{ if } \a\in \Delta,\\
       q^*(\a)  \hspace{2.7cm} \text{}  \a\in \dom(q^*)\setminus \Delta
 \end{array} \right.$
\end{center}
Then set
\begin{center}
$\Psi(q^*)=\langle \Psi(f^{q^*}), A^{q^*} \rangle.$
\end{center}
Then we have
\begin{center}
$\Psi: \PP_E \downarrow q \rightarrow \PP_E \downarrow p,$
\end{center}
and it is easily seen to be the converse of $\Phi.$
\end{proof}

\begin{theorem}
$G$ be $\PP_E$-generic over $V$, and let $G^\pi$ be the filter generated by $\pi[G].$ Then
$HOD^{V[G]} \subseteq V[G^\pi].$
\end{theorem}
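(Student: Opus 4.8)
The plan is to derive $\mathrm{HOD}^{V[G]} \subseteq V[G^\pi]$ from the Homogeneity Lemma 3.28 by the standard argument that a sufficiently homogeneous quotient forcing cannot add ordinal-definable sets beyond the ground model it is computed over. First I would work in $V[G^\pi]$ and let $\mathbb{Q} = \PP_E / G^\pi$ be the quotient forcing, so that $V[G]$ is a $\mathbb{Q}$-generic extension of $V[G^\pi]$; concretely $\mathbb{Q}$ can be taken to be $\{p \in \PP_E : \pi(p) \in G^\pi\}$ with the induced ordering, and there is a $\mathbb{Q}$-generic $G$ over $V[G^\pi]$ giving back $V[G]$. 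It suffices to show that every set of ordinals in $V[G]$ that is definable in $V[G]$ from ordinal parameters already lies in $V[G^\pi]$; since every element of $\mathrm{HOD}^{V[G]}$ is coded by such a set (via its transitive closure and a well-ordering), this yields the theorem.

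The key step is a weak homogeneity statement for $\mathbb{Q}$: for any two conditions $p, q \in \mathbb{Q}$ there is an isomorphism $\mathbb{Q} \downarrow p \simeq \mathbb{Q} \downarrow q$, or at least one that carries one to the other after passing to compatible conditions. This is exactly what Lemma 3.28 provides inside $\PP_E$: if $\pi(p) = \pi(q)$ then $\PP_E \downarrow p \simeq \PP_E \downarrow q$, and by Remark 3.29 it is enough that $\pi(p)$ and $\pi(q)$ be $\leq^{*,\pi}$-compatible. Now given $p, q \in \mathbb{Q}$, we have $\pi(p), \pi(q) \in G^\pi$, so they are compatible in $G^\pi$, hence $\leq^{*,\pi}$-compatible (recall $\langle \PP^{*,\pi}_E, \leq^{*,\pi} \rangle$ is trivial by Remark 3.22, so compatibility in $\PP^\pi_E$ reduces to the tree part and to $f^p(\k) = f^q(\k)$); thus Lemma 3.28 applies and the isomorphism $\Phi$ it produces restricts to an isomorphism of the appropriate dense subsets of $\mathbb{Q} \downarrow p$ and $\mathbb{Q} \downarrow q$ in $V[G^\pi]$. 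I would then run the usual argument: suppose $\dot{x}$ is a $\mathbb{Q}$-name in $V[G^\pi]$ for a set of ordinals and some condition forces $\dot{x}$ to be definable from ordinals $\alpha, \vec{\gamma}$; then for each ordinal $\xi$, whether $\xi \in \dot{x}$ is decided uniformly — if some $p$ forces $\xi \in \dot{x}$ then no $q$ can force $\xi \notin \dot{x}$, because applying $\Phi$ would transport the statement $\phi(\alpha,\vec\gamma)$ (which pins down $x$) while moving $p$ below $q$, contradicting that $q$ forces its negation. Hence $\{\xi : \exists p \in \mathbb{Q}\ (p \Vdash \xi \in \dot{x})\}$ is computed in $V[G^\pi]$ and equals $x$.

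To make this precise I would be careful about two points. One is that Lemma 3.28 is stated for $\PP_E$, not for $\mathbb{Q}$; but the isomorphism $\Phi$ constructed there fixes the tree component $A^{p^*}$ and only alters coordinates off $\Delta = \dom(f^p)$ and the stem on $\Delta$, and in particular it preserves the value of $\pi$ up to $\leq^{*,\pi}$, so it maps conditions in $\mathbb{Q} \downarrow p$ into conditions with projection in $G^\pi$, i.e.\ into $\mathbb{Q} \downarrow q$ (after possibly shrinking to a dense set where the projections actually lie in $G^\pi$). Since the statement ``$p \Vdash_{\PP_E} \phi$'' and ``$p \Vdash_{\mathbb{Q},G^\pi} \phi$'' agree for $p \in \mathbb{Q}$ with $\pi(p) \in G^\pi$ (by the standard quotient-forcing theorems), the ``In particular'' clause of Lemma 3.28 transfers directly to $\mathbb{Q}$. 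The second point — and the part I expect to be the main obstacle — is handling definability with the full generic $G$ as a parameter-free notion: one must observe that $\mathrm{HOD}^{V[G]}$-membership is witnessed by formulas with ordinal parameters only, and that the forcing theorem relativized to $V[G^\pi]$ lets us reflect any such definition down to a name in $V[G^\pi]$ together with a condition in $G$; the homogeneity then kills the dependence on which condition in $G$ was used, leaving a $V[G^\pi]$-definable set. Assembling these observations gives $\mathrm{HOD}^{V[G]} \subseteq V[G^\pi]$.
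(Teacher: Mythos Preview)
Your approach is exactly the paper's: the paper's entire proof is the single line ``By the above homogeneity result,'' and you are supplying the standard argument that cone-homogeneity of the quotient forces $\mathrm{HOD}$ of the extension into the intermediate model.

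One step needs tightening. From $\pi(p),\pi(q)\in G^\pi$ you only get $\leq^\pi$-compatibility, not $\leq^{*,\pi}$-compatibility: if $f^p(\kappa)$ is a proper initial segment of $f^q(\kappa)$ (which certainly happens for conditions in $G$), then $\pi(p)$ and $\pi(q)$ lie in $G^\pi$ yet are $\leq^{*,\pi}$-incompatible, since $\leq^{*,\pi}$-compatibility in $\PP^\pi_E$ is equivalent to $f^p(\kappa)=f^q(\kappa)$. The fix is the one your later remarks gesture at: pick $r\in G^\pi$ with $r\leq^\pi \pi(p),\pi(q)$, and use that $\pi$ is a projection \emph{of Prikry type} (Definition~2.5(3)) to get $p'\leq p$ and $q'\leq q$ with $\pi(p')\leq^{*,\pi} r$ and $\pi(q')\leq^{*,\pi} r$; then $f^{\pi(p')}(\kappa)=f^r(\kappa)=f^{\pi(q')}(\kappa)$, so $\pi(p')$ and $\pi(q')$ are $\leq^{*,\pi}$-compatible and the Remark to the Homogeneity Lemma applies to $p',q'$. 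With this adjustment your argument goes through, and indeed it is cleanest to bypass the quotient entirely and argue directly that $\alpha\in A$ iff $G^\pi$ meets the $V$-definable set $\{r:\exists p\,(\pi(p)=r\wedge p\Vdash\phi(\alpha,\vec\gamma))\}$.
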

\begin{proof}
By the above homogeneity result.
\end{proof}

\subsection{Completing the proof of Theorem 3.1.} Finally we are ready to complete the proof of Theorem 3.1. Let $\PP_E$ and $\PP^\pi_E$ be as above, and let $G$ be $\PP_E$-generic over $V$. Let $G^\pi$ be the filter generated by $\pi[G].$ It follows from Lemma 2.2 and Theorem 3.24 that  $G^\pi$ is $\PP^\pi_E$-generic over $V$ and $V[G^\pi] \subseteq V[G].$  By Theorem 3.26, $\lambda$ remains inaccessible in $V[G^\pi]$. By Theorem 3.29, $HOD^{V[G]} \subseteq V[G^\pi],$ hence $\lambda$ remains inaccessible in  $HOD^{V[G]}.$

\section{All unocountable regular cardinals can be inaccessible in $\text{HOD}$}
In this section we give a proof of Theorem 1.3. Our strategy of the proof is similar to \cite{golshani}, but here we deal with a different  forcing notion.  As the proof of the theorem is long and complicated, we first give an idea of the proof, and then go into the details of the proof. First note that by the results from \cite{c-f-g} and \cite{friedman} , it suffices to prove the following:
\begin{theorem}
 Assume $GCH$ holds, $\kappa$ is a supercompact cardinal and $\l$ is an inaccessible cardinal above $\k$.  Then there is a generic extension $V[G]$ of $V$ in which:

$(1)$ $\kappa$ remains inaccessible,

$(2)$  There exists a club $C=\{\k_\xi:\xi<\k   \}$ of $\k$ consisting of $V$-measurable cardinals,

$(3)$  For all limit $\xi<\k, \k_\xi^+$ is inaccessible in $HOD^{V[G]}.$
\end{theorem}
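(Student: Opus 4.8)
The plan is to generalize the construction of Section 3 from a single supercompact $\k$ (made singular) to a ``Radin-style'' version in which $\k$ stays inaccessible and a club $C = \{\k_\xi : \xi<\k\}$ of former $V$-measurables is generated below it. Concretely, I would replace the single extender $E$ derived from a supercompactness embedding by a Radin-type sequence of extenders (the supercompact extender-based Radin forcing of Merimovich), so that the main forcing $\PP_E$ is now a length-$\k$ iteration/product whose generic adds the club $C$ together with, above each $\k_\xi$, a ``local'' collapse that makes $\k_\xi^+$ of $V[G]$ equal to some $V$-inaccessible $\l_\xi$ which remains inaccessible in $\mathrm{HOD}^{V[G]}$. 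First I would set up the large-cardinal hypotheses exactly as in Section 3.2 (GCH, $j:V\to M$ with $\crit(j)=\k$, $^{<\l}M\subseteq M$, $\l>\k$ inaccessible), and record the analogue of Theorem 3.11: the Radin version $\PP_E$ satisfies $\l^+$-c.c., has the Prikry property with respect to $\leq^*$, $\langle\PP_E,\leq^*\rangle$ is $\k$-closed, $\k$ stays inaccessible, and the generic club $C$ consists of $V$-measurables with the desired successor behavior.

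Next I would carry over the three ingredients of Section 3 essentially verbatim but ``localized along $C$''. Namely: (i) define the projected forcing $\PP^\pi_E$ by quotienting out, below each $\k_\xi$, exactly the coordinates that carry the collapsing information — keeping only the Radin/club data and the bounded-in-$\k_\xi$ Prikry sequences — and check (as in Lemmas 3.19, 3.23, 3.24 and Theorem 3.26) that $\PP^\pi_E$ is $\k^+$-c.c., has the Prikry property, is $\k$-closed in $\leq^*$, preserves cardinals, keeps $\k$ inaccessible, and keeps the relevant $\l_\xi$'s inaccessible; (ii) define the projection $\pi:\PP_E\to\PP^\pi_E$ coordinatewise (on each block the map of Section 3.3, identity on the Radin data) and prove it is a projection of Prikry type, invoking Lemma 2.10; (iii) prove the Homogeneity Lemma 3.27 in this setting: if $\pi(p)=\pi(q)$ then $\PP_E\downarrow p\simeq\PP_E\downarrow q$, via the block-by-block shift $\Phi$ of Section 3.5 applied simultaneously on all coordinates where $p$ and $q$ differ (all of which lie ``above the club'' and hence are exactly the collapsing coordinates $\pi$ forgets). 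From (iii) one gets $\mathrm{HOD}^{V[G]}\subseteq V[G^\pi]$ exactly as in Theorem 3.29, and then for each limit $\xi<\k$ the cardinal $\k_\xi^+ = (\l_\xi)^{V[G]}$ is inaccessible in $V[G^\pi]$, hence in $\mathrm{HOD}^{V[G]}$, giving (3); clauses (1) and (2) are built into the Radin forcing's basic theory. Finally I would note that by the results of \cite{c-f-g} and \cite{friedman} this suffices for Theorem 1.3, and that passing to $\text{W}_\l$ for $\l$ the least inaccessible of $\text{W}$ yields a model of $\ZFC$ in which every uncountable regular cardinal is inaccessible in $\mathrm{HOD}$.

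The main obstacle I expect is twofold. First, bookkeeping: unlike the one-step Prikry forcing of Section 3, the Radin forcing has conditions carrying a sequence of ``measure one'' trees indexed by the coordinates of the extender sequence, and the interaction between the Radin club coordinate and the extender-based collapse coordinates must be arranged so that (a) the generic club $C$ is itself not disturbed by the collapses, (b) between consecutive points $\k_\xi,\k_{\xi+1}$ of $C$ the forcing factors as a small collapse that does not interfere with $\k_\xi^+$'s successorhood, and (c) the projection $\pi$ cleanly separates ``club + bounded Prikry'' information (which survives into $V[G^\pi]$) from ``collapsing'' information (which is quotiented out). Getting the definitions of $\PP_E$, $\PP^\pi_E$ and $\pi$ so that the block-wise Homogeneity argument actually goes through — in particular so that $\pi(p)=\pi(q)$ really does force the domains and the Radin-data to agree, leaving only shiftable collapse coordinates — is the delicate point, and is exactly where this proof will differ in detail (though not in the choice of main forcing) from \cite{gitik-merimovich}. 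Second, one must verify that the $\l_\xi$'s chosen as targets of the local collapses remain inaccessible after the $\k^+$-c.c. projected forcing $\PP^\pi_E$ and are not accidentally collapsed or made singular in $\mathrm{HOD}^{V[G]}$; this uses the $\k^+$-c.c. and $\k$-closure of $\PP^\pi_E$ together with a Lévy–Solovay-style argument localized below $\k$.
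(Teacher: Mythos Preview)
Your proposal is correct and follows essentially the same approach as the paper: the main forcing is Merimovich's supercompact extender based Radin forcing built from a Mitchell-increasing sequence $\bar E$ of extenders on $\k$, the projected forcing $\PP^\pi_{\bar E}$ is obtained by replacing, in each $f$-part of a condition, every coordinate $\bar\a\neq\bar\k$ by the trivial value $\langle\,\rangle$ (keeping the tree unchanged), the projection $\pi$ is defined recursively on critical points exactly as you describe (block-by-block, using the Section~3 map on each factor), and the homogeneity lemma is proved by the same recursive scheme, yielding $\mathrm{HOD}^{V[G]}\subseteq V[G^\pi]$ and hence the inaccessibility of each $\lambda_\xi=(\k_\xi^+)^{V[G]}$ in $\mathrm{HOD}^{V[G]}$. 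The only detail you left implicit is that one must take $\cf(o(\bar E))>\lambda$ (so that $\k$ stays inaccessible, indeed measurable, in $V[G]$), and that the $\k^+$-c.c.\ of $\PP^\pi_{\bar E}$ comes from the triviality of $\PP^{*,\pi}_{\bar E}$ just as in Lemma~3.19; otherwise your outline matches the paper's argument point for point.
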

So we start with  $GCH+$ a suitable  Mitchell increasing sequence of extenders  $\bar{E}=\langle E_\xi: \xi < o(\bar{E}) \rangle$  on $\k+$  $\l$ is the least inaccessible above $\k$. We define a forcing notion $\PP_{\bar{E}}$, called the ``supercompact extender based Radin forcing'', due to Merimovich \cite{mer5}, which has the following properties:
\begin{enumerate}
 \item $\PP_{\bar{E}}$ preserves the inaccessibility of $\k$,

 \item It collapses all cardinals in $(\k, \l),$ and preserves all cardinals $\geq \l,$ so $\k^+=\l$ in the extension by $\PP_{\bar{E}}$,

 \item  It adds a club $C=\{\k_\xi:\xi<\k  \}$ of $\k$ consisting of $V$-measurable cardinals,

\item If $\xi<\k$ is a limit ordinal, and $\l_\xi$ is the least inaccessible above $\k_\xi,$ then for each regular $\mu\in (\k_\xi, \l_\xi),$ there exists a cofinal sequence into $\mu$ of order type $\leq \k_\xi,$ in particular all cardinals $\mu \in (\k_\xi, \l_\xi)$ are collapsed.

 \item The forcing preserves $\l_\xi,$ so for limit $\xi<\k, \k_\xi^+=\l_\xi$ in the extension by $\PP_{\bar{E}}$. All other cardinals $<\k$ are preserved.

\end{enumerate}
 Let $G$ be $\PP_{\bar{E}}$-generic over $V$. We will show that
$V[G]$ is as required. To this end, we define a new forcing notion $\PP_{\bar{E}}^\pi$, called the ``projected supercompact extender based Radin forcing'', and a Prikry type projection $\pi: \PP_{\bar{E}} \rightarrow \PP_{\bar{E}}^\pi$. The forcing notion $\PP_{\bar{E}}^\pi$  adds the club $C$, and does not collapse any cardinals.
Furthermore, the resulting quotient forcing has enough homogeneity properties to guarantee that $HOD^{V[G]} \subseteq V[G^\pi],$ where $G^\pi$ is the filter generated by $\pi[G].$ From this results it  follows that for all limit ordinals $\xi<\k, (\k_\xi^+)^{V[G]}=\lambda_\xi$ is inaccessible in $HOD^{V[G]}$, which will give us a proof of Theorem 4.1 and hence of Theorem 1.3.

The rest of this section is devoted to the proof of Theorem 4.1. We proceed the proof in the same way as in the proof of Theorem 3.1.

\subsection{Supercompact extender based Radin forcing}
``supercompact extender based Radin forcing'' was defined by Merimovich \cite{mer5}. Our proof of Theorem 1.4. is based on this forcing notion, so we give the basic facts about it
 and present some of its main properties. All of the results of this section are due to Merimovich.  We assume that $GCH$ holds in the ground model, $\k$ is a supercompact cardinal and $\lambda$ is the least  strongly inaccessible above $\k$.
\begin{definition}
Assume $j: V \rightarrow M$ is an elementary embedding with  $crit(j)=\k, j(\k)>\lambda$ such that $M \supseteq$
$^{<\lambda}M$.

$(1)$ For each $\a$ let $\l_\a$ be minimal $\eta$ such that $\a < j(\eta).$

$(2)$ The  generators of the embedding $j$, $\mathfrak{g}(j)=\{\k_\xi: \xi\in ON  \},$ are defined by induction by
\begin{center}
$\k_\xi=\min\{\a\in ON: \forall \zeta < \xi$ $\forall \eta \in ON$ $\forall f: \eta \rightarrow ON$ $(j(f)(\k_\zeta) \neq \a ) \}.$
\end{center}
\end{definition}
If $\mathfrak{g}(j)$ is a set, then we can code $j$ by an extender $E= \langle E(\a): \a \in \mathfrak{g}(j) \rangle,$ where for each $\a \in \mathfrak{g}(j)$, $E(\a)$ is a measure on $\l_\a$ defined by
\begin{center}
$\forall A \subseteq \l_\a $ $(A \in E(\a)) \Leftrightarrow \a \in j(A).$
\end{center}
\begin{remark}
In this paper, we only deal with embeddings having their generators below $j(\lambda)$ (and hence a set), and consider the natural elementary embedding $j_E: V \rightarrow Ult(V, E).$ We may further assume that $j=j_E.$
\end{remark}

Assume $\bar{E}=\langle E_\xi: \xi < o(\bar{E})$ is a sequence of extenders on $\k$ such that:
\begin{enumerate}
\item $\bar{E}$ is Mitchell increasing, i.e., for each $\xi< o(\bar{E})$ $\langle E_\zeta: \zeta < \xi \rangle \in E_\xi,$
\item If $j_{E_\xi}: V \rightarrow M_\xi \simeq Ult(V, E_\xi)$ is the corresponding elementary embedding, then
\begin{enumerate}
\item [(2-1)] $crit(j_{E_\xi})=\k$ and $j_{E_\xi}(\k) \geq \lambda,$
\item [(2-2)] $\lambda$ is minimal such that $M_\xi \nsupseteq$ $^{\lambda}M_\xi$ (and hence $M_\xi \supseteq ^{<\lambda}M_\xi),$
\item [(2-3)] $\forall \xi < o(\bar{E})$ $\mathfrak{g}(j_{E_\xi})  \subseteq \sup j_{E_\xi}[\lambda].$
\end{enumerate}
\end{enumerate}
Note that for $\xi_1 < \xi_2 < o(\bar{E}),$ $j_{E_{\xi_1}}(\lambda) < j_{E_{\xi_2}}(\lambda).$ Let $\l \leq \epsilon \leq  \sup\{j_{E_\xi}(\lambda): \xi < o(\bar{E})  \}.$
\begin{definition}
An extender sequence $\bar{\nu}$ has the form $ \langle \tau, e_0, \dots, e_\xi, \dots  \rangle_{\xi<\mu},$ where $\langle e_\xi: \xi < \mu \rangle$ is a Mitchell increasing sequence of extenders with identical critical points and closure points, and $crit(e_0) \leq \tau < j_{e_0}(\a),$ where $\a$ is the closure point of $M_{e_0}.$ The order of the extender sequence $\bar{\nu}$ is $\mu,$ which we denote by $o(\bar{\nu})=\mu.$ We write $\bar{\nu}_0$ for $\tau$ and naturally $\bar{\nu}_{1+\xi}$ for $e_\xi.$
\end{definition}

Note that formally the Mitchell order function $o(...)$ is defined on different type of objects. The first object is of the form $\langle E_\xi: \xi<\mu \rangle,$ and the second is of the form $\langle \tau, e_0, \dots, e_\xi, \dots  \rangle_{\xi<\mu}.$ In either case only the extenders are considered, thus there is no confusion.

\begin{definition}
The set $\mathfrak{D}$ is a base set used in the domain of functions. For each $\k \leq \a < \sup\{j_{E_\xi}(\lambda): \xi < o(\bar{E})  \}, \a \notin j[\gl],$ define
\begin{center}
$\bar{\a}= \langle \a \rangle ^{\frown} \langle E_\zeta: \zeta < o(\bar{E}), \a < j_{E_\zeta}(\lambda)      \rangle.$
\end{center}
Then define
\begin{center}
$\mathfrak{D}=\{\bar{\a}: \k \leq \a < \epsilon   \}$.
\end{center}
On  $\mathfrak{D}$ the order $<$ is defined by $\bar{\a} < \bar{\beta} \Leftrightarrow \a < \beta.$ The set $\mathfrak{R}$ is used as the base for range of functions
\begin{center}
$\mathfrak{R}=\{ \bar{\nu}\in V_\lambda: \bar{\nu}$ is an extender sequence$ \}.$
\end{center}
On $\mathfrak{R}$ the order $<$ is defined by $\bar{\nu} < \bar{\mu} \Leftrightarrow \bar{\nu}_0 < \bar{\mu}_0.$
 For some technical reasons appearing in the next subsection, we assume $\langle \rangle \in \mathfrak{R}.$
\end{definition}
\begin{definition}
Assume $d\in P_\lambda(\mathfrak{D}).$ Then $\nu\in OB(d)$ iff;
\begin{enumerate}
\item $\nu: \dom(\nu) \rightarrow \mathfrak{R},$
\item $\bar{\k}\in \dom(\nu) \subseteq d \cup j[\lambda],$
\item $|\nu|< \nu(\bar{\k})_0,$
\item $\forall \a< \l$ $(j(\a) \in \dom(\nu) \Rightarrow \nu(j(\a))=\langle \a \rangle),$
\item $\forall \bar{\a}\in \dom(\nu)\setminus j[\lambda]$ $(o(\nu(\bar{\a}))< o(\bar{\a}) )$,
\item For each $\bar{\a}\in \dom(\nu)\setminus j[\lambda]$ such that $\bar{\a} \neq \bar{\k},$ the following is satisfied: Assume
\begin{center}
$\nu(\bar{\k})= \langle \tau, e_0, \dots, e_\xi, \dots     \rangle_{\xi < \zeta_\k}$ (where $crit(e_0)=\tau$)
\end{center}
and
\begin{center}
$\nu(\bar{\a})= \langle \tau', e'_0, \dots, e'_\xi, \dots     \rangle_{\xi < \zeta_\a}$.
\end{center}
Then $\langle  e_{\zeta+\xi}: \xi < \zeta_\k  \rangle=\langle  e'_{\xi}: \xi < \zeta_\a  \rangle,$ where $\zeta< \zeta_\k$ is minimal such that $\tau'\in [\sup_{\zeta'<\zeta}j_{e_{\zeta'}}(\tau), j_{e_\zeta}(\sigma)),$ where $\sigma$ is the closure point of $e_\zeta.$
\item $\forall \bar{\a}, \bar{\beta} \in \dom(\nu)\setminus j[\lambda]$ $(\bar{\a} < \bar{\beta} \Rightarrow \nu(\bar{\a}) < \nu(\bar{\beta}) ).$
\end{enumerate}
\end{definition}

On $OB(d)$ the partial order $<$ is defined by $\mu < \nu$ iff either
\begin{center}
$\forall \bar{\a}\in \dom(\mu)\cap \dom(nu)$ $($ $o(\mu(\bar{\a})) > o(\nu(\bar{\a}))$ and $\mu(\bar{\a}) < \nu(\bar{\a})  )$
\end{center}
or
\begin{center}
$\dom(\mu) \subseteq \dom(\nu)$ and $\forall \bar{\a}\in \dom(\mu)$ $($ $o(\mu(\bar{\a})) \leq o(\nu(\bar{\a}))$ and $\mu(\bar{\a}) < \nu(\bar{\a})  ).$
\end{center}
\begin{definition}
Assume $d\in P_\gl(\mathfrak{D})$.
\begin{enumerate}
\item  Assume $T \subseteq OB(d)^{<\xi} (1<\xi \leq \omega)$ and let $n<\omega.$ Then
\begin{itemize}
\item $Lev_n(T)=T \cap OB(d)^{n+1},$

\item $\Suc_T(\langle \rangle) = \Lev_0(T),$

\item $\Suc_T(\langle \nu_o, \dots, \nu_{n-1}         \rangle)=\{\mu\in OB(d): \langle \nu_o, \dots, \nu_{n-1}, \mu \rangle \in T    \}.$
\end{itemize}
\item For $\langle \nu \rangle \in T,$ let
\begin{center}
 $T_{\langle \nu \rangle}=\{  \langle \nu_o, \dots, \nu_{k-1} \rangle: k<\omega, \langle \nu, \nu_o, \dots, \nu_{k-1} \rangle \in T \}$
\end{center}
 and define by recursion for $\langle \nu_o, \dots, \nu_{n-1} \rangle \in T,$
\begin{center}
$T_{\langle \nu_o, \dots, \nu_{n-1} \rangle}= (T_{\langle \nu_o, \dots, \nu_{n-2} \rangle})_{\langle \nu_{n-1} \rangle}.$
\end{center}
\item The measures $E_\xi(d)$ $(\xi < o(\bar{E}))$ on $OB(d)$ are defined as follows:
\begin{center}
$\forall X\subseteq OB(d)$ $($ $X\in E_\xi(d) \Leftrightarrow mc_\xi(d)\in j_{E_\xi}(X) $ $)$,
\end{center}
where
\begin{center}
$mc_\xi(d)=\{ \langle j_{E_\xi}(\bar{\a}), R_\xi(\bar{\a}) \rangle : \bar{\a}\in d, \a < j_{E_\xi}(\l) \},$
\end{center}
and $R_\xi$ is defined for each $\k \leq \a < \epsilon$ by
\begin{center}
$R_\xi(\a)=\langle \a \rangle ^{\frown} \langle E_{\xi'}: \xi' < \xi,    \a < j_{E_{\xi'}}(\l)                     \rangle.$
\end{center}
Also set
\begin{center}
$E(d)=\bigcap\{ E_\xi(d): \xi < o(\bar{E})            \}.$
\end{center}
\item A tree $T \subseteq OB(d)^{<\omega}$ is called a $d$-tree, if
\begin{itemize}
\item For each $\langle \nu_o, \dots, \nu_{n-1} \rangle\in T,$ we have $\nu_0 < \dots < \nu_{n-1},$
\item $\forall \langle \nu_o, \dots, \nu_{n-1} \rangle \in T, \Suc_T(\langle \nu_o, \dots, \nu_{n-1} \rangle)\in E(d).$
\end{itemize}
\item Assume $c\in P_{\k^+}(\mathfrak{D}), c \subseteq d,$ and $T$ is a tree with elements from $OB(d)$. Then the projection of $T$ to a tree with elements from $OB(c)$ is
\begin{center}
$T \upharpoonright c = \{\langle \nu_o\upharpoonright c, \dots, \nu_{n-1}\upharpoonright c \rangle: n<\omega, \langle \nu_o, \dots, \nu_{n-1} \rangle\in T                    \}$.
\end{center}
\end{enumerate}
\end{definition}

\begin{definition}
$\PP^*_{\bar{E}, \epsilon}$ consists of all functions $f: d \rightarrow$$^{<\omega}\mathfrak{R}$ such that
\begin{enumerate}
\item $\bar{\k}\in d\in P_\l(\mathfrak{D})$,
\item For each $\bar{\a}\in d, f(\bar{\a})=\langle f_0(\bar{\a}), \dots, f_{k-1}(\bar{\a}) \rangle$ is an increasing sequence in $\mathfrak{R},$
\item For each $\bar{\a}\in d$ and $i< |f(\bar{\a})|,$ $(o(f_i(\bar{\a})) < o(\bar{\a}) ),$
\item For each $\bar{\a}\in d,$ the sequence $\langle  o(f_i(\bar{\a})): i< |f(\bar{\a})|       \rangle$ is non-increasing.
\end{enumerate}
\end{definition}
\begin{definition}
For $f, g \in \PP^*_{\bar{E}, \epsilon},$ we say $f$ is an extension of $g$ $(f \leq^*_{\PP^*_{\bar{E}, \epsilon}} g)$ if $f \supseteq g.$
\end{definition}
\begin{remark}
Clearly $\PP^*_{\bar{E}, \epsilon} \simeq Add(\gl, \epsilon).$
\end{remark}
We write $OB(f), E_\xi(f), E(f), mc_\xi(f)$ and $f$-tree, for $OB(\dom(f)), E_\xi(\dom(f)),$
$E(\dom(f)),$
\\
$mc_\xi(\dom(f))$ and $\dom(f)$-tree respectively, where $f\in \PP^*_{\bar{E}, \epsilon}.$
The following lemma is clear.
\begin{lemma}
$\langle \PP^*_{\bar{E}, \epsilon},     \leq^*_{\PP^*_{\bar{E}, \epsilon}}       \rangle$ satisfies the $\gl^+-c.c.$
\end{lemma}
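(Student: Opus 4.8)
The plan is to run the standard generalized $\Delta$-system (sunflower) argument, using only that $\lambda$ is strongly inaccessible, so that $|V_\lambda|=\lambda$ and $\lambda^{<\lambda}=\lambda$. One could instead simply quote the preceding remark, which identifies $\PP^*_{\bar{E},\epsilon}$ with $Add(\lambda,\epsilon)$, whose $\lambda^+$-c.c.\ under $\GCH$ is classical; but the direct argument is short enough to give in full.

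First I would fix an arbitrary family $\{f_i:i<\lambda^+\}\subseteq\PP^*_{\bar{E},\epsilon}$ and set $d_i=\dom(f_i)\in P_\lambda(\mathfrak{D})$, so $|d_i|<\lambda$. Since $\lambda^+$ is regular and $|\alpha|^{<\lambda}\le\lambda^{<\lambda}=\lambda<\lambda^+$ for every $\alpha<\lambda^+$, the hypotheses of the $\Delta$-system lemma for a family of $\lambda^+$ sets each of size $<\lambda$ are satisfied. Hence there are $I\in[\lambda^+]^{\lambda^+}$ and a root $d^*$ with $d_i\cap d_j=d^*$ for all distinct $i,j\in I$; since $\bar{\k}\in d_i$ for every $i$ we get $\bar{\k}\in d^*$, and clearly $d^*\in P_\lambda(\mathfrak{D})$.

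Next I would count the possible restrictions $f_i\upharpoonright d^*$. Each is a function from $d^*$ into ${}^{<\omega}\mathfrak{R}$, and since $\mathfrak{R}\subseteq V_\lambda$ with $\lambda$ inaccessible we have $|{}^{<\omega}\mathfrak{R}|\le|V_\lambda|=\lambda$; therefore the number of such restrictions is at most $\lambda^{|d^*|}\le\lambda^{<\lambda}=\lambda$. By pigeonhole there are $J\in[I]^{\lambda^+}$ and a single $g$ with $f_i\upharpoonright d^*=g$ for all $i\in J$. Then for distinct $i,j\in J$ the union $f_i\cup f_j$ is a single function (the two conditions agree on $d_i\cap d_j=d^*$), its domain $d_i\cup d_j$ again lies in $P_\lambda(\mathfrak{D})$ and contains $\bar{\k}$, and clauses (2)--(4) in the definition of $\PP^*_{\bar{E},\epsilon}$ carry over because each of them constrains only the individual value $f(\bar{\alpha})$, which in $f_i\cup f_j$ is copied verbatim from $f_i$ or from $f_j$. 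So $f_i\cup f_j\in\PP^*_{\bar{E},\epsilon}$, and since $f_i\cup f_j\supseteq f_i$ and $f_i\cup f_j\supseteq f_j$ it is a common $\leq^*_{\PP^*_{\bar{E},\epsilon}}$-lower bound; hence $f_i$ and $f_j$ are compatible. Thus no family of size $\lambda^+$ is an antichain, which is exactly the $\lambda^+$-c.c.

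I do not expect any real obstacle. The only step needing a line of care is the check that $f_i\cup f_j$ still satisfies clauses (2)--(4) of the definition of $\PP^*_{\bar{E},\epsilon}$, and this is immediate from their coordinatewise character. All the cardinal arithmetic invoked---$\lambda^{<\lambda}=\lambda$, $|V_\lambda|=\lambda$, and the applicability of the $\Delta$-system lemma with parameter $\lambda^+$---follows from $\lambda$ being strongly inaccessible, which is part of the standing hypotheses (and would in any case follow from $\GCH$).
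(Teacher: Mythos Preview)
Your argument is correct, and it is exactly the standard $\Delta$-system proof underlying the paper's approach. The paper gives no proof at all: immediately before the lemma it remarks that $\PP^*_{\bar{E},\epsilon}\simeq Add(\lambda,\epsilon)$ and then declares the lemma ``clear,'' relying on the classical fact (which you yourself note) that $Add(\lambda,\epsilon)$ is $\lambda^+$-c.c.\ when $\lambda^{<\lambda}=\lambda$. Your write-up simply unwinds that citation into the usual sunflower argument, so the two approaches are the same in substance.
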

\begin{definition}
Assume $f\in \PP^*_{\bar{E}, \epsilon}$ and $\nu\in OB(f).$ Define $g=f_{\langle \nu \rangle}$ to be of the form $g= g_{\leftarrow}$$^{\frown}g_{\rightarrow}$ (the case $g_{\leftarrow}=\emptyset$ is allowed) where
\begin{enumerate}
\item $\dom(g_{\rightarrow})=\dom(f),$
\item For each $\bar{\a}\in \dom(g_{\rightarrow})$
\begin{center}
 $g_{\rightarrow}(\bar{\a}) = \left\{ \begin{array}{l}
       f(\bar{\a}) \upharpoonright k$$^{\frown} \langle \nu(\bar{\a}) \rangle  \hspace{1.1cm} \text{ if } \bar{\a}\in \dom(\nu), \nu(\bar{\a}) > f_{|f(\bar{\a})|-1}(\bar{\a}),\\
       f(\bar{\a})  \hspace{3cm} \text{}  Otherwise.
 \end{array} \right.$
\end{center}
where
\begin{center}
$k=\min\{l \leq |f(\bar{\a})|: \forall l \leq i < |f(\bar{\a})|, o(f_i(\bar{\a})) < o(\nu(\bar{\a}))        \}.$
\end{center}
The above value of $k$ is defined so as to ensure that $\langle o(f_i(\bar{\a})) : i<k \rangle ^{\frown} \langle   o(\nu(\bar{\a}))   \rangle$ is non-increasing.
\item $\dom(g_{\leftarrow})=\{\nu(\bar{\a}): \bar{\a}\in \dom(\nu),  o(\nu(\bar{\a})) >0         \}$,
\item For each $\bar{\a}\in \dom(\nu)$ with $o(\nu(\bar{\a})) >0$ we have
\begin{center}
$g_{\leftarrow}(\nu(\bar{\a}))= f(\bar{\a}) \upharpoonright (|f(\bar{\a})|\setminus k),$
\end{center}
where $k$ is defined as above.
\end{enumerate}
\end{definition}
\begin{definition}
$\PP_{\bar{E}, \epsilon, \rightarrow}$ consists of pairs $p= \langle f, A \rangle$ where
\begin{enumerate}
\item $f\in \PP^*_{\bar{E}, \epsilon},$
\item $A$ is an $f$-tree such that for each $\langle \nu \rangle \in A$ and each $\bar{\a}\in \dom(\nu)$
\begin{center}
$f_{|f(\bar{\a})|-1}(\bar{\a}) < \nu(\bar{\a})$
\end{center}
\end{enumerate}
We write $f^p, A^p$ and $mc_\xi(p)$ for $f, A$ and $mc_\xi(f)$, respectively.
\end{definition}
\begin{definition}
Let $p, q \in \PP_{\bar{E}, \epsilon, \rightarrow}.$ We say $p \leq^*_{\PP_{\bar{E}, \epsilon, \rightarrow}} q$ ($p$ is a Prikry extension of $q$) if
\begin{enumerate}
\item $f^p \leq^*_{\PP^*_{\bar{E}, \epsilon}} f^q,$
\item $A^p \upharpoonright \dom(f^q) \subseteq A^q$.
\end{enumerate}
\end{definition}
\begin{definition}
Assume $\langle e_i: i< n \rangle$ $(n<\omega)$ is a sequence of extenders such that $e_i \in V_{crit(e_{i+1})}.$  The product forcing notion $\PP=\prod_{i<n}\PP_{e_i}$ is defined by applying the definitions of the Prikry with extenders forcing notions coordinatewise. That is, for each $\langle p_i: i< n \rangle, \langle q_i: i< n \rangle \in \PP,$
\begin{center}
$\langle p_i: i< n \rangle \leq_\PP \langle q_i: i< n \rangle \Leftrightarrow \forall i<n, p_i \leq_{\PP_{e_1}} q_i,$
\end{center}
and
\begin{center}
$\langle p_i: i< n \rangle \leq^*_\PP \langle q_i: i< n \rangle \Leftrightarrow \forall i<n, p_i \leq^*_{\PP_{e_1}} q_i.$
\end{center}
For $p=\langle p_i: i< n \rangle\in \PP,$ we use the notation $p_{\leftarrow}=p_0^{\frown} \dots ^{\frown} p_{n-2}$ and $p_{\rightarrow}=p_{n-1}.$ Assume $\langle \nu  \rangle \in A^{p_{\rightarrow}}.$ Define the condition $p_{\langle \nu \rangle}$ recursively as follows:
\begin{center}
$p_{\langle \nu \rangle}=p_{\leftarrow}$$^{\frown}p_{\rightarrow \langle \nu \rangle}.$
\end{center}
Note that with $p_{\leftarrow}$ and $p_{\rightarrow}$ defined we have, for each $p, q \in \PP,$
\begin{center}
$p\leq q \Leftrightarrow (p_{\leftarrow} \leq q_{\leftarrow}$ and $ p_{\rightarrow} \leq q_{\rightarrow} \leq),$
\end{center}
and
\begin{center}
$p\leq^* q \Leftrightarrow (p_{\leftarrow} \leq^* q_{\leftarrow}$ and $ p_{\rightarrow} \leq^* q_{\rightarrow} \leq).$
\end{center}
\end{definition}
It is a standard fact that $\langle \PP, \leq_\PP, \leq^*_\PP     \rangle$ is a Prikry type forcing notion, provided that all forcing notions
 $\langle \PP_{e_i}, \leq_{\PP_{e_i}}, \leq^*_{\PP_{e_i}}     \rangle$ are of Prikry type. Note that as the extenders $e_i$ are disjoint, factoring of $\PP$ is easily achieved, thus a generic extension by $\PP$ can be analyzed by inspecting generic extensions by each factor $\PP_{e_i}.$
We are now ready to define the forcing notion $\PP_{\bar{E}, \epsilon}.$
\begin{definition}
A condition $p$ in the forcing notion $\PP_{\bar{E}, \epsilon}$ is of the form $p_{\leftarrow}$$^{\frown} p_{\rightarrow}$ where
\begin{enumerate}
\item $p_{\rightarrow} \in \PP_{\bar{E}, \epsilon, \rightarrow},$
\item $p_{\leftarrow} \in \prod_{i<n} \PP_{\bar{e}_i}$ $(n<\omega)$, where $\bar{e}_i$ are extender sequences such that
\begin{enumerate}
\item [(2-1)] $o(\bar{e}_i) \leq o(\bar{E}),$
\item [(2-2)] $\bar{e}_i \in V_{crit(\bar{e}_{i+1})},$
\item [(2-3)] $\langle \nu \rangle \in A^{p_{\rightarrow}} \Rightarrow \nu(\bar{\k})_0 > crit(\bar{e}_{n-1}).$
\end{enumerate}
\end{enumerate}
\end{definition}
Conditions in $\PP_{\bar{E}, \epsilon}$ have lower parts $\PP_{\bar{E}, \epsilon, \leftarrow}$ defined by
\begin{center}
$\PP_{\bar{E}, \epsilon, \leftarrow}=\{p_{\leftarrow}: p\in \PP_{\bar{E}, \epsilon}  \}.$
\end{center}
Also for $p\in \PP_{\bar{E}, \epsilon},$ we define $f^p$ recursively to be $f^{p_{\leftarrow}}$$^{\frown} f^{p_{\rightarrow}},$ and we write $f^p_{\leftarrow}$ and $f^p_{\leftarrow}$ for $f^{p_{\leftarrow}}$ and $f^{p_{\rightarrow}}$, respectively.
\begin{definition}
Let $p, q\in \PP_{\bar{E}, \epsilon}.$ Then $p \leq^*_{\PP_{\bar{E}, \epsilon}} q$ ($p$ is a Prikry extension of $q$) if:
\begin{enumerate}
\item $p_{\rightarrow} \leq^* q_{\rightarrow},$
\item $p_{\leftarrow} \leq^* q_{\leftarrow}.$
\end{enumerate}
\end{definition}
\begin{definition}
\begin{enumerate}
\item Assume $\mu, \nu \in OB(f)$ are such that $\mu < \nu,$ and for each $\bar{\a}\in \dom(\mu), o(\mu(\bar{\a})) < o(\nu(\bar{\a})).$ Then $\mu \downarrow \nu$, the reflection of $\mu$ by $\nu$ is defined by
\begin{center}
$\dom(\mu \downarrow \nu)=\{ \nu(\bar{\a}): \bar{\a}\in \dom(\mu)     \}$
\end{center}
and for each $\bar{\a}\in \dom(\mu)$
\begin{center}
$(\mu \downarrow \nu)(\nu(\bar{\a}))=\mu(\bar{\a}).$
\end{center}
If $\mu_0, \dots, \mu_n, \nu \in OB(f)$ are such that $\mu_i < \nu$ and for each  $\bar{\a}\in \dom(\mu_i), o(\mu_i(\bar{\a})) < o(\nu(\bar{\a})),$ then
$\langle \mu_0, \dots, \mu_n \rangle \downarrow \nu,$ the reflection of $\langle \mu_0, \dots, \mu_n \rangle$ by $\nu$ is defined to be
 $\langle  \mu_0 \downarrow \nu, \dots, \mu_n \downarrow \nu   \rangle.$

\item Assume $A$ is an $f$-tree and $\langle \nu \rangle \in A.$ The tree $A \downarrow \nu$ is defined as follows: $A \downarrow \nu$ consists of all $\langle \mu_0, \dots, \mu_n \rangle \downarrow \nu$ where:
\begin{enumerate}
\item $n<\omega,$
\item $\langle \mu_0, \dots, \mu_n \rangle \in A,$
\item $\forall i\leq n$ $( \mu_i < \nu, \dom(\mu_i) \subseteq \dom(\nu)$ and $\forall \bar{\a}\in \dom(\mu_i), o(\mu_i(\bar{\a})) < o(\nu(\bar{\a})) ).$
\end{enumerate}
\end{enumerate}
\end{definition}
It is easily seen that
\begin{center}
$\{\langle \nu \rangle \in A: A \downarrow \nu$ is an $f_{\langle \nu \rangle \leftarrow}$-tree$ \} \in E_1(f),$
\end{center}
and if we consider $\emptyset$ to be an $\emptyset$-tree, then
\begin{center}
$\{\langle \nu \rangle \in A: A \downarrow \nu$ is an $f_{\langle \nu \rangle \leftarrow}$-tree$ \} \in E(f).$
\end{center}
\begin{definition}
Assume $q\in \PP_{\bar{E}, \epsilon, \rightarrow}$ and $\langle \nu \rangle \in A^q.$ The condition $p\in \PP_{\bar{E}, \epsilon}$ is the one point extension of $q$ by $\langle \nu \rangle$ ($p=q_{\langle \nu \rangle}$) if it is of the form $p_{\leftarrow}$$^{\frown}p_{\rightarrow}$ where $p_{\leftarrow} \in \PP_{\bar{e}, \rightarrow}$
and $p_{\rightarrow} \in \PP_{\bar{E}, \rightarrow}$ are defined as follows:
\begin{enumerate}
\item $f^p=f^q_{\langle \nu \rangle}$,
\item $A^{p_{\rightarrow}} =A^q_{\langle \nu \rangle},$
\item $A^{p_{\leftarrow}} =A^q \downarrow \nu.$
\end{enumerate}
Define $q_{\langle \nu_0, \dots, \nu_n \rangle}$ recursively by
\begin{center}
$q_{\langle \nu_0, \dots, \nu_n \rangle}=(q_{\langle \nu_0, \dots, \nu_{n-1} \rangle})_{\langle \nu_n \rangle}$,
\end{center}
where $\langle \nu_0, \dots, \nu_{n-1} \rangle \in A^q.$
\end{definition}
\begin{definition}
Assume $p\in \PP_{\bar{E}, \epsilon}$ and $\langle \nu \rangle \in A^{p_{\rightarrow}}.$ Then
\begin{center}
$p_{\langle \nu \rangle}=p_{\leftarrow}$$^{\frown} p_{\rightarrow}\langle \nu \rangle.$
\end{center}
Define $p_{\langle \nu_0, \dots, \nu_n \rangle}$ recursively by
\begin{center}
$p_{\langle \nu_0, \dots, \nu_n \rangle}=(p_{\langle \nu_0, \dots, \nu_{n-1} \rangle})_{\langle \nu_n \rangle}$
\end{center}
\end{definition}
\begin{definition}
Let $p, q \in \PP_{\bar{E}, \epsilon}.$ Then $p \leq_{\PP_{\bar{E}, \epsilon}} q$ ($p$ is stronger than $q$) if $p= r^{\frown} s$
and there is $\langle \nu_0, \dots, \nu_{n-1} \rangle \in A^{q_{\rightarrow}}$ such that
\begin{enumerate}
\item $s \leq^*_{\PP_{\bar{E}, \epsilon}} q_{\rightarrow \langle \nu_0, \dots, \nu_{n-1} \rangle},$
\item $r \leq q_{\leftarrow}$.
\end{enumerate}
\end{definition}
\begin{definition}
Assume $p \in \PP_{\bar{E}, \epsilon}.$ Then $p_{\rightarrow} \in \PP_{\bar{E}, \epsilon},$ and we define
\begin{center}
$\PP_{\bar{E}, \epsilon} \downarrow p_{\rightarrow} =\{q\in \PP_{\bar{E}, \epsilon}: q \leq p_{\rightarrow}         \}$
\end{center}
and
\begin{center}
$\PP_{\bar{E}, \epsilon} \downarrow p_{\leftarrow} =\{r: r \leq p_{\leftarrow}  \}.$
\end{center}
\end{definition}

Let's state the main properties of our forcing notion.
\begin{lemma}
$\PP_{\bar{E}, \epsilon}$ satisfies the $\l^+$-c.c.
\end{lemma}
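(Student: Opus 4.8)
The plan is to establish the $\l^+$-chain condition for $\PP_{\bar E,\epsilon}$ by reducing it to the chain condition of the pure parts, exactly as one does for extender-based Prikry-type forcings. The key observation is that a condition $p = p_\leftarrow{}^\frown p_\rightarrow$ is essentially determined, for chain-condition purposes, by its finite ``stem'' data: the lower part $p_\leftarrow \in \prod_{i<n}\PP_{\bar e_i}$, which is a condition in a product of small ($<\k$) forcings, together with the function $f^{p_\rightarrow}$ (an element of $\PP^*_{\bar E,\epsilon}$) and the domain $\dom(f^{p_\rightarrow})\in P_\l(\mathfrak D)$. The trees $A^p$ play no role in incompatibility: if two conditions agree on the relevant finite data and their pure parts are compatible in $\PP^*_{\bar E,\epsilon}$, their trees can be intersected (after restricting to the common domain), so the conditions are compatible. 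Thus it suffices to count the possible finite data.

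First I would recall Remark 4.10, which gives $\PP^*_{\bar E,\epsilon}\simeq \Add(\l,\epsilon)$, and Lemma 4.11 (or more precisely the statement that $\langle\PP^*_{\bar E,\epsilon},\leq^*\rangle$ satisfies the $\l^+$-c.c.), which follows from $\GCH$ together with $|P_\l(\mathfrak D)|\le |\mathfrak D|^{<\l}=\epsilon^{<\l}$ and $\l$ regular. Next I would observe that $\PP_{\bar E,\epsilon,\leftarrow}=\bigcup_n\prod_{i<n}\PP_{\bar e_i}$ has size $<\l$: each individual $\PP_{\bar e_i}$ lives inside $V_{\crit(\bar e_{i+1})}$ by clause (2-2) of Definition 4.17, hence has size $<\k<\l$, and there are only countably many coordinates, so the whole lower-part forcing has size $<\l$ (it is in fact $<\k$). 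Combining: the number of pairs $\langle p_\leftarrow, f^{p_\rightarrow}\rangle$ is at most $\l\cdot(\text{number of elements of }\PP^*_{\bar E,\epsilon})=\l^+$ — and crucially, among any $\l^+$ conditions, by the $\l^+$-c.c. of $\PP^*_{\bar E,\epsilon}$ we can find two whose pure parts $f^{p_\rightarrow},f^{q_\rightarrow}$ are $\leq^*$-compatible, and by the $<\l$-size of the lower part we can (shrinking to a set of size $\l^+$) assume they have the same lower part $p_\leftarrow=q_\leftarrow$.

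Having arranged $p_\leftarrow = q_\leftarrow$ and $f^{p_\rightarrow}\parallel f^{q_\rightarrow}$ in $\PP^*_{\bar E,\epsilon}$, let $h\le^* f^{p_\rightarrow},f^{q_\rightarrow}$ with $\dom(h)=\dom(f^{p_\rightarrow})\cup\dom(f^{q_\rightarrow})$. Then I would form the common extension by setting its right part to be $\langle h, (A^{p_\rightarrow}\cap A^{q_\rightarrow})^{*}\rangle$, where $(A^{p_\rightarrow}\cap A^{q_\rightarrow})^{*}$ denotes the natural extension of $A^{p_\rightarrow}\restriction \dom(h)\cap A^{q_\rightarrow}\restriction\dom(h)$ to an $h$-tree — this is again an $h$-tree because the measures $E_\xi(d)$ are $<\l$-complete (indeed $\k$-complete) in the relevant sense and $\dom(h)\in P_\l(\mathfrak D)$, so finite intersections and the coherence conditions of Definition 4.12 on trees are preserved; clause (2) of Definition 4.15 is met since $h\supseteq f^{p_\rightarrow}$, and similarly the side conditions (2-3) of Definition 4.17 relating the lower part to $\nu(\bar\k)_0$ continue to hold since they only constrain $\crit(\bar e_{n-1})$, which is unchanged. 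The lower part of the common extension is just $p_\leftarrow$ itself. This produces a single condition below both $p$ and $q$, establishing the chain condition.

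The main obstacle I expect is purely bookkeeping: verifying that the intersection-and-extension of the two right parts genuinely yields an object in $\PP_{\bar E,\epsilon,\rightarrow}$ — i.e., that the tree $A^{p_\rightarrow}\cap A^{q_\rightarrow}$, after lifting to domain $\dom(h)$, still satisfies all of Definition 4.12(4) (the $E(d)$-largeness of successor sets and the increasing condition $\nu_0<\dots<\nu_{n-1}$) and the compatibility condition with $f$ in Definition 4.15(2). This requires knowing that each $E_\xi(d)$ is a nonprincipal measure closed under the operations involved and that restriction/lifting of trees between $OB(d)$ and $OB(d')$ for $d\subseteq d'$ behaves well, which is part of the Merimovich machinery recorded in the definitions above (in particular the projection $T\restriction c$ of Definition 4.12(5)); none of it is deep, but it must be checked that no clause is violated. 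Everything else — the cardinal arithmetic and the reduction to the pure part — is routine.
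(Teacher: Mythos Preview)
Your approach is correct and is essentially the same as the paper's proof: reduce to a set of conditions with identical lower part $p_{\leftarrow}$ (there are at most $\lambda$ such lower parts), then invoke the $\lambda^+$-c.c.\ of $\PP^*_{\bar E,\epsilon}$ (Lemma~4.11) to find two conditions whose $f$-parts are compatible, and observe that this forces compatibility of the full conditions since the trees can be intersected. The paper compresses your last two paragraphs into the single sentence ``for any $p,q\in\PP_{\bar E,\epsilon}$, if $f^p$ is compatible with $f^q$ in $\PP^*_{\bar E,\epsilon}$ then $p$ and $q$ are compatible in $\PP_{\bar E,\epsilon}$,'' leaving the tree-intersection bookkeeping implicit, but the argument is the same.
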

\begin{proof}
Assume not, and let $A \subseteq \PP_{\bar{E}, \epsilon}$ be an antichain of size $\l^+.$ We may assume without loss of generality that all $p_{\leftarrow},$ for $p\in A$
are the same (as there are only $\lambda$-many such $p_{\leftarrow}$). Note that for any $p, q \in \PP_{\bar{E}, \epsilon},$ is $f^p$ is compatible with $f^q$ in $\PP^*_{\bar{E}, \epsilon},$ then $p$ and $q$ are compatible in $\PP_{\bar{E}, \epsilon}.$ It follows that $\{f^p: p\in A   \} \subseteq \PP^*_{\bar{E}, \epsilon}$
is an antichain of size $\gl^+,$ which contradicts Lemma 4.17.
\end{proof}
The following factorization property is clear.
\begin{lemma} (Factorization lemma)
For any $p\in \PP_{\bar{E}, \epsilon},$
\begin{center}
$\PP_{\bar{E}, \epsilon} \downarrow p \simeq \PP_{\bar{E}, \epsilon} \downarrow p_{\leftarrow} \times \PP_{\bar{E}, \epsilon} \downarrow p_{\rightarrow}.$
\end{center}
\end{lemma}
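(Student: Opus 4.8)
The plan is to exhibit an explicit order isomorphism $\Phi$ from $\PP_{\bar{E}, \epsilon} \downarrow p$ onto $(\PP_{\bar{E}, \epsilon} \downarrow p_{\leftarrow}) \times (\PP_{\bar{E}, \epsilon} \downarrow p_{\rightarrow})$, whose inverse is essentially concatenation. Fix $p = p_{\leftarrow}{}^{\frown} p_{\rightarrow}$ with $p_{\leftarrow} \in \prod_{i<n} \PP_{\bar{e}_i}$ as in the definition of $\PP_{\bar{E}, \epsilon}$, and recall the key clause $(2\text{-}3)$ there: every $\langle \nu \rangle \in A^{p_{\rightarrow}}$ satisfies $\nu(\bar{\k})_0 > \crit(\bar{e}_{n-1})$. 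This is what drives the argument: the ``upper half'' of the forcing --- namely $p_{\rightarrow}$ together with every coordinate that any one-point extension of $p_{\rightarrow}$ can ever create --- is supported strictly above the finitely many small extenders $\bar{e}_0, \dots, \bar{e}_{n-1}$ occurring in $p_{\leftarrow}$, so the two halves do not interact.

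First I would define $\Phi$. Given $q \leq_{\PP_{\bar{E}, \epsilon}} p$, the definition of $\leq_{\PP_{\bar{E}, \epsilon}}$ allows us to write $q = r {}^{\frown} s$ with $r \leq p_{\leftarrow}$ and, for a suitable witness $\langle \nu_0, \dots, \nu_{m-1} \rangle \in A^{p_{\rightarrow}}$, $s \leq^{*}_{\PP_{\bar{E}, \epsilon}} (p_{\rightarrow})_{\langle \nu_0, \dots, \nu_{m-1} \rangle}$. Since $(p_{\rightarrow})_{\langle \nu_0, \dots, \nu_{m-1} \rangle} \leq p_{\rightarrow}$, this gives $r \in \PP_{\bar{E}, \epsilon} \downarrow p_{\leftarrow}$ and $s \in \PP_{\bar{E}, \epsilon} \downarrow p_{\rightarrow}$, and I would put $\Phi(q) = (r, s)$. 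The point to be checked here is that this is well defined, i.e. independent of the chosen witness: the decomposition of $q$ into, on the one hand, the coordinates of its lower part with critical point $\leq \crit(\bar{e}_{n-1})$, and on the other hand the remaining lower coordinates together with $q_{\rightarrow}$, is canonical, and clause $(2\text{-}3)$ forces the first piece to be $\leq p_{\leftarrow}$ and the second to be $\leq p_{\rightarrow}$.

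For the inverse, given $(r, s)$ with $r \leq p_{\leftarrow}$ and $s = s_{\leftarrow}{}^{\frown} s_{\rightarrow} \leq p_{\rightarrow}$, I would send it to $r {}^{\frown} s_{\leftarrow}{}^{\frown} s_{\rightarrow}$. The only genuine verification is that this is again a condition of $\PP_{\bar{E}, \epsilon}$: every extender sequence occurring in $r$ has critical point $\leq \crit(\bar{e}_{n-1})$ (being obtained from $p_{\leftarrow}$ by refining its coordinates and by one-point extensions of the factors $\PP_{\bar{e}_i}$), while every extender sequence occurring in $s_{\leftarrow}$ is produced by a one-point extension of $p_{\rightarrow}$ and hence, by clause $(2\text{-}3)$ for $p$, has critical point $\nu(\bar{\k})_0 > \crit(\bar{e}_{n-1})$ for some $\langle \nu \rangle \in A^{p_{\rightarrow}}$; thus the concatenation still satisfies clauses $(2\text{-}1)$--$(2\text{-}3)$ of the definition, and clearly $r {}^{\frown} s_{\leftarrow}{}^{\frown} s_{\rightarrow} \leq p$, using the witness for $s \leq p_{\rightarrow}$ together with $r \leq p_{\leftarrow}$. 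Order preservation in both directions and the fact that the two maps are mutually inverse then follow directly by unwinding the definitions of $\leq_{\PP_{\bar{E}, \epsilon}}$ and of the one-point extensions $p_{\langle \nu_0, \dots, \nu_{k-1} \rangle}$.

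The main --- indeed essentially the only --- obstacle is the bookkeeping: making precise which coordinates of an arbitrary $q \leq p$ should be assigned to the $p_{\leftarrow}$-factor and which to the $p_{\rightarrow}$-factor, and checking that gluing back together two independently chosen extensions never violates the disjoint-support clauses $(2\text{-}1)$--$(2\text{-}3)$. Once clause $(2\text{-}3)$ is invoked to separate the two supports, the claim reduces to the standard factorization of a finite product of Prikry-with-extenders forcings over pairwise disjoint extenders, as noted after the definition of the product forcing $\prod_{i<n}\PP_{e_i}$, which is routine; this is why the lemma is stated as ``clear''.
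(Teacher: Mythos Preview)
Your proposal is correct and is exactly the natural argument the paper has in mind; the paper itself gives no proof beyond declaring the factorization ``clear'', so there is nothing to compare against. Your identification of clause $(2\text{-}3)$ as the reason the split $q = r^{\frown}s$ is canonical (via the separation of critical points below and above $\crit(\bar{e}_{n-1})$) is precisely the point, and the rest is, as you say, bookkeeping.
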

The next lemmas are proved in \cite{mer5}.
\begin{lemma}
 $\langle  \PP_{\bar{E}, \epsilon}, \leq, \leq^* \rangle$ satisfies the Prikry property.
\end{lemma}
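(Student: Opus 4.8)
The plan is to peel off the lower part by factorization and then run a Radin-style
decision argument on the right part, using an induction on $\kappa$ (equivalently, on the
structural complexity of the forcing): lower parts created along the way live at critical
points below $\kappa$, so the Prikry property for them is available as induction hypothesis.
Fix $p\in\PP_{\bar E,\epsilon}$ and a sentence $\sigma$ of the forcing language; we must find
$q\leq^* p$ deciding $\sigma$. By the Factorization Lemma~4.20,
$\PP_{\bar E,\epsilon}\downarrow p\simeq(\PP_{\bar E,\epsilon}\downarrow p_\leftarrow)\times
(\PP_{\bar E,\epsilon}\downarrow p_\rightarrow)$, and $\PP_{\bar E,\epsilon}\downarrow p_\leftarrow$
sits inside a finite product $\prod_{i<n}\PP_{\bar e_i}$ of supercompact extender based Radin
forcings whose critical points are below $\kappa$; by the induction hypothesis each $\PP_{\bar e_i}$
is of Prikry type, hence so is the finite product, by the standard fact quoted after
Definition~4.11'. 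Thus it suffices to prove that $\langle\PP_{\bar E,\epsilon,\rightarrow},\leq,\leq^*\rangle$
has the Prikry property, and, replacing $p$ by $p_\rightarrow$, we may assume
$p=\langle f,A\rangle\in\PP_{\bar E,\epsilon,\rightarrow}$.

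Call a node $s=\langle\nu_0,\dots,\nu_{m-1}\rangle\in A$ \emph{settled} (for $q\leq^* p$, in place of $p$)
if either $q_{\langle s\rangle}$ decides $\sigma$, or no extension of $q_{\langle s\rangle}$ of the form
$r\leq^* q_{\langle s\rangle}$ decides $\sigma$. The core step is: for each node $s$ one can shrink
$\Suc_{A_{\langle s\rangle}}(\langle\rangle)$ to a set $B_s\in E(f_{\langle s\rangle})$ such that for
$\mu\in B_s$ the decision status of $s{}^\frown\langle\mu\rangle$ is \emph{uniform} in each
Mitchell-order band (the set of $\mu\in B_s$ with $o(\mu(\bar\kappa))$ in a fixed band lies in the
$\kappa$-complete ultrafilter $E_\xi(f_{\langle s\rangle})$ for the appropriate $\xi$, so one of the
three values ``$\Vdash\sigma$'', ``$\Vdash\neg\sigma$'', ``undecided but handled recursively'' is large
there); here one invokes the induction hypothesis to absorb, for each $\mu$, the lower part of
$q_{\langle s{}^\frown\langle\mu\rangle\rangle}$ created by the operations $f_{\langle\mu\rangle}$ and
$A\downarrow\mu$ of Definitions~4.14 and~4.19. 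Diagonalizing these shrinkings over all nodes of the
well-founded tree $A$ — which is possible because $\langle\PP^*_{\bar E,\epsilon},\leq^*\rangle\simeq
\mathrm{Add}(\lambda,\epsilon)$ is $<\!\lambda$-closed, so the fewer than $\kappa$ successive direct
extensions amalgamate — produces a single $q=\langle f,A'\rangle\leq^* p$ in which every node is settled
and the decision data at a node is constant on each band. One then pushes the decision to the empty node:
if $q$ did not decide $\sigma$, pick $q'\leq q$ deciding it, say $q'\Vdash\sigma$; by the definition of
$\leq$ and the induction hypothesis applied to the lower part of $q'$ we may take $q'\leq^* q_{\langle s\rangle}$
for a node $s$ of $A'$, so $q_{\langle s\rangle}\Vdash\sigma$; choosing $s$ of least length and using the
band-uniformity at its immediate predecessor $s'$ (together with $\kappa$-completeness to glue the witnessing
conditions into a subtree), $q_{\langle s'\rangle}$ already forces $\sigma$, contradicting minimality. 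Hence
$q$ decides $\sigma$, as required.

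The main obstacle, and the only place where supercompact extender based \emph{Radin} forcing differs
essentially from the extender based Prikry forcing of Section~3, is the bookkeeping of the lower parts:
a single one point extension $p_{\langle\mu\rangle}$ does not merely lengthen a stem but spawns a fresh
finite product of forcings $\PP_{\bar e_i}$ of strictly smaller complexity (the $g_\leftarrow$ of
Definition~4.14 together with $A\downarrow\mu$), and these must be decided ``first'' and uniformly in the
Mitchell-order band before the top decision can be stabilized. Making the uniformity and gluing steps
precise is exactly the simultaneous induction indicated above, carried out in full in \cite{mer5}.
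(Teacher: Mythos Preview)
The paper does not prove this lemma at all: it is one of the ``next lemmas'' that the text simply attributes to Merimovich \cite{mer5} without argument. Your sketch therefore goes strictly beyond what the paper offers, and the outline you give --- factorize off the lower part, induct on the critical point so that lower parts spawned along the way are handled by the induction hypothesis, uniformize decision data over Mitchell-order bands at each node, diagonalize using the $<\!\lambda$-closure of $\leq^*$, and push the decision down to the root by a minimal-length counterexample argument --- is indeed the shape of the proof in \cite{mer5}.

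Two minor points. First, several of your internal references are off: the Factorization Lemma is 4.24 (not 4.20), the $g_\leftarrow$ construction is Definition 4.12, $A\downarrow\nu$ is Definition 4.18, and the remark about Prikry products follows Definition 4.15. Second, the sentence ``it suffices to prove that $\langle\PP_{\bar E,\epsilon,\rightarrow},\leq,\leq^*\rangle$ has the Prikry property'' is slightly loose, since $\PP_{\bar E,\epsilon,\rightarrow}$ is not closed under $\leq$ (a one-point extension already creates a nontrivial lower part); what you really mean --- and what the rest of your argument actually does --- is to work in $\PP_{\bar E,\epsilon}$ below a condition with trivial lower part.
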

\begin{lemma}
In a $ \PP_{\bar{E}, \epsilon}$-generic extension, $\l$ is preserved.
\end{lemma}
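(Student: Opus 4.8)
The plan is to show that $\l$ is not collapsed by $\PP_{\bar{E},\epsilon}$; since $\PP_{\bar{E},\epsilon}$ has the $\l^+$-c.c.\ (Lemma 4.20) and, by the properties of the forcing, every $V$-cardinal in $(\k,\l)$ is collapsed to $\k$, this yields that $\l=(\k^+)^{V[G]}$ is a (regular) cardinal, i.e.\ is preserved. So suppose toward a contradiction that $\l$ is collapsed in $V[G]$. As cardinals of a generic extension are ground-model cardinals, $|\l|^{V[G]}$ is a $V$-cardinal $<\l$, hence $\le\k$, so there are an ordinal $\mu\le\k$ and a cofinal map $f\colon\mu\to\l$ in $V[G]$. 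Thus it suffices to prove the covering statement $(\star)$: for every $p\in\PP_{\bar{E},\epsilon}$, every $\mu\le\k$ and every $\PP_{\bar{E},\epsilon}$-name $\dot f$ with $p\Vdash\dot f\colon\check\mu\to\check\l$, there are a pure extension $p^*\le^* p$ and a function $F\in V$ with $F\colon\mu\to[\l]^{<\l}$ such that $p^*\Vdash\forall\a<\check\mu\,(\dot f(\a)\in\check F(\a))$. Granting $(\star)$, the condition $p^*$ forces $\mathrm{ran}(\dot f)\subseteq\bigcup_{\a<\mu}F(\a)$, a set of $V$-cardinality $\le\mu\cdot\sup_{\a<\mu}|F(\a)|<\l$ (using $\mu\le\k<\l$ and the regularity of $\l$ in $V$), hence bounded below $\l$; so no condition forces $\dot f$ to be cofinal, contradicting our assumption. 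Note that $(\star)$ only needs to be proved for $\mu\le\k$, which is what makes the argument feasible.

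Next I would set up $(\star)$ using the factorization and the Prikry machinery. By the Factorization Lemma 4.21, forcing below $p$ decomposes as forcing below $p_\leftarrow$ followed by forcing below $p_\rightarrow$; the first factor is built from the small extender forcings $\PP_{\bar{e}_i}$ with $\crit(\bar{e}_i)<\k$, so it has size $<\k$, is $\k$-c.c., and therefore preserves $\l$ and cannot affect $(\star)$. Hence I may assume $p=p_\rightarrow$ has trivial lower part. On such conditions $\langle\PP_{\bar{E},\epsilon},\le^*\rangle$ is $<\k$-closed: the $Add(\l,\epsilon)$-part $\PP^*_{\bar{E},\epsilon}$ is $<\l$-closed, and since each $E_\xi(d)$, hence $E(d)=\bigcap_{\xi}E_\xi(d)$, is $\k$-complete, a decreasing sequence of $E(d)$-trees of length $<\k$ (over a $<\k$-increasing union of domains, which remains in $P_\l(\mathfrak D)$ because $\l$ is regular above $\k$) has a lower bound. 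Now fix $\a<\mu$. By the Prikry property (Lemma 4.22), in its strengthened form (that below any condition the conditions deciding $\dot f(\a)$ are dense), there are a pure extension $p^*$ and a finite $n$ such that every one-point extension of $p^*$ by an element of $\Lev_n$ of its tree decides $\dot f(\a)$; and by a further thinning of the tree one may arrange that the decided value depends only on the restriction of this $n$-tuple to a finite set $b_\a\subseteq\dom(f^{p^*})$. Because $b_\a$ is finite, there is $\gd_\a<\l$ bounding all the parameters attached to coordinates in $b_\a$, so — using $\GCH$ and the inaccessibility of $\l$ — there are fewer than $\l$ possible such restrictions; hence the set $F(\a)$ of values so decided has size $<\l$.

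Finally the data for the various $\a<\mu$ must be amalgamated into a single $p^*$. For $\mu<\k$ this is a routine fusion using the $<\k$-closure of $\le^*$: iterate the construction over $\a<\mu$, intersecting trees at limit stages and enlarging the domain to absorb $\bigcup_{\a<\mu}b_\a$ (a set of size $<\k$). The delicate case is $\mu=\k$, where a naive fusion of length $\k$ would need $\k^+$-completeness of $E(d)$, which fails. This is exactly where the specific design of $\PP_{\bar{E},\epsilon}$ in \cite{mer5} comes in: the successive thinnings can be performed diagonally — the thinning at stage $\a$ only constrains the part of the tree above a level associated with $\a$ — so that the relevant diagonal intersection is still an $E(d)$-tree, by the normality of the extender measures $E_\xi(d)$. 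I expect this diagonal fusion to be the main obstacle; it is carried out in \cite{mer5}, and granting it we obtain $p^*$ and $F$ witnessing $(\star)$, which completes the proof. (As a byproduct, the $<\k$-closure of $\le^*$ together with Lemma 2.6 re-confirms that $\PP_{\bar{E},\epsilon}$ adds no new bounded subsets of $\k$, consistently with $\k$ remaining inaccessible.)
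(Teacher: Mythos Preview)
The paper does not actually prove this lemma: it is one of the results attributed to Merimovich and simply cited from \cite{mer5}. Your sketch is a reasonable outline of the standard strategy used there --- factor off the small lower part, then on the top part run a covering argument for names $\dot f\colon\mu\to\lambda$ using the Prikry property, tree-thinning, and (for $\mu=\kappa$) a diagonal fusion relying on normality of the measures $E_\xi(d)$. Since you yourself defer the decisive diagonal-fusion step to \cite{mer5}, your argument and the paper's are in effect the same: both point to Merimovich for the real work.

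One technical point you gloss over, worth flagging: in the Radin setting a one-point extension $p_{\langle\nu\rangle}$ with $o(\nu(\bar\kappa))>0$ spawns a nontrivial new lower part $p_{\langle\nu\rangle\leftarrow}$, so the statement ``every $n$-step extension decides $\dot f(\alpha)$, with value depending only on a finite-coordinate restriction'' is more delicate than in plain extender-based Prikry forcing --- the lower-part forcing below $\nu(\bar\kappa)$ must also be absorbed into the decision. This is precisely what makes the proof in \cite{mer5} nontrivial, but it does not alter the overall shape of the covering argument you described.
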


Let $G$ be $\PP_{\bar{E}, \epsilon}$-generic over $V$, and for $\k \leq \a < \epsilon$ let
\begin{center}
$G^{\bar{\a}}=\bigcup \{ f^p_{\rightarrow}(\bar{\a}): p\in G, \bar{\a}\in \dom(f^p_{\rightarrow})          \}$
\end{center}
and
\begin{center}
$C^{\bar{\a}}=\bigcup \{ \bar{\nu}_0: \bar{\nu}\in G^{\bar{\a}}         \}.$
\end{center}
\begin{lemma}
\begin{enumerate}
\item  $C^{\bar{\k}}$ is a club of $\k,$
\item $\a \neq \beta \Rightarrow C^{\bar{\a}} \neq C^{\bar{\beta}},$
\item  Forcing with $\PP_{\bar{E}, \epsilon}$ collapses all cardinal in $(\k, \l)$ onto $\k.$
\end{enumerate}
\end{lemma}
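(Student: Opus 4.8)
The plan is to prove the three clauses by density and shrinking arguments over $\PP_{\bar{E}, \epsilon}$, using the mechanics of one‑point extensions (the operations $f_{\langle\nu\rangle}$ and $p_{\langle\nu\rangle}$) together with the combinatorial properties of the measures $E_\xi(f)$. For clause (1), unboundedness of $C^{\bar{\k}}$ follows by shrinking: for $\gamma < \k$ the set $\{\nu \in OB(f) : \nu(\bar{\k})_0 > \gamma\}$ is $E_\xi(f)$‑large for every $\xi$, since $mc_\xi(f)$ sends $j_{E_\xi}(\bar{\k})$ to $R_\xi(\bar{\k})$, whose first coordinate is $\k > \gamma$; hence it is $E(f)$‑large, and any condition has a one‑point extension adding a point of $C^{\bar{\k}}$ past $\gamma$. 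Closure of $C^{\bar{\k}}$ is the standard Radin argument: whenever a one‑point extension is taken at a node $\nu$ with $o(\nu(\bar{\k})) > 0$, the left component $g_{\leftarrow}$ of $f_{\langle\nu\rangle}$ is, recursively, a lower‑level extender‑based Radin condition whose generic club is cofinal in $\nu(\bar{\k})_0$; an induction on $o(\bar{E})$ then shows that every limit point of $C^{\bar{\k}}$ below $\k$ is realised this way.

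For clause (2), fix $\k \leq \a < \beta < \epsilon$; it suffices that the set of conditions $q$ with $q \Vdash$ ``$C^{\bar{\a}} \setminus C^{\bar{\beta}} \neq \emptyset$'' is dense. Given $p$, first pass via $\leq^*$ to a condition with $\bar{\a}, \bar{\beta} \in \dom(f^p_{\rightarrow})$ (enlarging the domain of $f^p_{\rightarrow}$ if necessary, which is a $\leq^*$‑extension), and then choose a single $\nu \in \Lev_0(A^p)$ with $o(\nu(\bar{\beta})) = 0$, with $\bar{\a} \in \dom(\nu)$, and with $\nu(\bar{\a})_0$ larger than every entry already appearing in $f^p_{\rightarrow}(\bar{\beta})$ and than $\max f^p_{\rightarrow}(\bar{\k})$. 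Such a $\nu$ exists: although $\{\nu : o(\nu(\bar{\beta})) = 0\}$ is only $E_0(f^p)$‑large (as $R_0(\bar{\beta})$ has Mitchell order $0$) and not $E(f^p)$‑large, the set $\Lev_0(A^p)$ is $E(f^p)$‑large hence $E_0(f^p)$‑large, the remaining requirements are $E_0(f^p)$‑large (using that $R_\xi(\bar{\a})$ has first coordinate $\a \geq \k$), and $E_0(f^p)$ is a filter. Put $q = p_{\langle\nu\rangle}$. Then $\nu(\bar{\a})_0$ is an entry of $f^q_{\rightarrow}(\bar{\a})$, so $q$ forces $\nu(\bar{\a})_0 \in C^{\bar{\a}}$; on the other hand $\nu(\bar{\a})_0 < \nu(\bar{\beta})_0$ (since $\a < \beta$), the entry $\nu(\bar{\beta})$ has Mitchell order $0$ and so contributes no club below it, all later entries at coordinate $\bar{\beta}$ and the clubs they recursively contribute lie above $\nu(\bar{\beta})_0$, and $\nu(\bar{\a})_0$ was chosen above every earlier entry at coordinate $\bar{\beta}$ and hence above all clubs they contribute; thus $q \Vdash \nu(\bar{\a})_0 \notin C^{\bar{\beta}}$, which gives the density. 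Hence $C^{\bar{\a}} \neq C^{\bar{\beta}}$.

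Clause (3) is the collapsing phenomenon intrinsic to extender‑based forcing. Since $j_{E_\xi}(\k) \geq \l$ for every $\xi$ we have $\epsilon \geq \l$, so by (1) and (2) the family $\langle C^{\bar{\a}} : \k \leq \a < \epsilon \rangle$ consists of $\geq \l$ pairwise distinct clubs of $\k$. It is enough to show that every regular $\mu \in (\k, \l)$ is collapsed to $\k$, since this also handles the singular cardinals of the interval, $\l$ being a limit cardinal. The collapse comes from the interaction of the $Add(\l, \epsilon)$‑part $\PP^*_{\bar{E}, \epsilon}$ with the Radin points of $C^{\bar{\k}}$: reflecting a $\mu$‑block of coordinates of $\PP^*_{\bar{E}, \epsilon}$ through some $mc_\xi$, the tail of $\PP_{\bar{E}, \epsilon}$ between two consecutive points of $C^{\bar{\k}}$ behaves like a forcing mapping $\k$ onto $\mu$, and $\langle C^{\bar{\a}} : \k \leq \a < \k + \mu \rangle$ together with $C^{\bar{\k}}$ is precisely the data from which a surjection $\k \to \mu$ is read off in $V[G]$. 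Making this precise — showing that above each Radin point the tail of $\PP_{\bar{E}, \epsilon}$ collapses $(\k, \l)$ onto $\k$ — is the delicate work carried out by Merimovich in \cite{mer5}, and is the main obstacle here; the closure and separation arguments for (1) and (2) are, by comparison, routine one‑point‑extension manipulations.
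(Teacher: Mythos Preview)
The paper does not prove this lemma; it is one of several facts about $\PP_{\bar{E},\epsilon}$ that the author attributes to Merimovich \cite{mer5} (the sentence ``The next lemmas are proved in \cite{mer5}'' preceding Lemma~4.25 is meant to cover it). So there is no paper-proof to compare against, and your sketch already goes further than the paper itself.

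Your arguments for (1) and (2) follow the standard density/one-point-extension pattern found in Merimovich's papers, and (2) in particular uses the right idea: pick $\nu$ from an $E_0(f^p)$-large set to force $o(\nu(\bar\beta))=0$, so that the new entry at $\bar\beta$ seals off the interval containing $\nu(\bar\a)_0$. Two remarks, however. First, the phrase ``the clubs they recursively contribute'' does not fit the definition of $C^{\bar\beta}$ used here: $C^{\bar\beta}$ collects only first coordinates of entries in $f^r_{\rightarrow}(\bar\beta)$ for $r\in G$ and never recurses into the lower-level forcings spawned by $g_{\leftarrow}$ (those live at coordinates $\nu(\bar\a)$, not at $\bar\beta$). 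That part of your justification is therefore unnecessary. Second, to conclude $q\Vdash \nu(\bar\a)_0\notin C^{\bar\beta}$ you must rule out \emph{every} $r\in G$ with $\bar\beta\in\dom(f^r_{\rightarrow})$, not just those with $r\le q$. This does go through --- compatible conditions yield coherent $f_{\rightarrow}(\bar\beta)$-sequences, one being (up to truncation) an initial segment of the other --- but you glossed over this compatibility analysis, and in the Radin setting the truncation mechanism of Definition~4.12 makes it slightly less immediate than in the pure Prikry case.

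For (3) you correctly identify the collapse as the substantive point and defer to \cite{mer5}, exactly as the paper does; your informal description of the mechanism (the distinct $C^{\bar\a}$'s coding surjections) is the right intuition.
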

It follows from our results that
\begin{center}
$\l=(\k^+)^{V[G]}.$
\end{center}
Also by Lemma 4.27, $2^\k \geq |\epsilon|,$ and using the $\gl$-chain condition of the forcing, we can conclude that
$(2^\k)^{V[G]} \leq (|\PP_{\bar{E}, \epsilon}|^{<\gl})^{\k}=\epsilon,$ and hence
\begin{center}
 $V[G]\models$``$2^\k=|\epsilon|$''.
\end{center}
\begin{lemma}
 If $cf(o(\bar{E})) > |\epsilon|,$ then $\k$ remains measurable in $V[G]$.
\end{lemma}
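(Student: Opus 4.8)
The plan is to show that $\k$ remains measurable in $V[G]$ by lifting a suitable ground-model embedding witnessing measurability of $\k$. The natural candidate comes from the Mitchell-increasing sequence $\bar E$ itself: since $\cf(o(\bar E)) > |\epsilon|$, we can take a "direct limit" type embedding associated to $\bar E$, or more simply exploit that the sequence of measures $\langle E_\xi : \xi < o(\bar E)\rangle$ is long enough that some $E_\xi$ (indeed cofinally many) will not be "used up" by the forcing $\PP_{\bar E, \epsilon}$. Concretely, I would fix an elementary embedding $i : V \to N \simeq \Ult(V,U)$ for a suitable normal measure $U$ on $\k$ derived from the tail of $\bar E$ (or from an extender $E_\xi$ with $\xi$ of cofinality $> |\epsilon|$), and aim to extend $i$ to $i^+ : V[G] \to N[G^+]$ where $G^+$ is generic for $i(\PP_{\bar E, \epsilon})$ over $N$.

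First I would set up the target model: in $N$, the forcing $i(\PP_{\bar E, \epsilon})$ factors as $\PP_{\bar E, \epsilon} * \PP_{\text{tail}}$, where the tail part acts on the interval above $\k$ (in $N$'s sense, above $\sup i[\lambda]$ or the appropriate point), using the image extender sequence $i(\bar E)$ restricted past the portion below $\k$. The key point is that because $\cf(o(\bar E)) > |\epsilon|$, the generators and measures involved in a generic condition for $\PP_{\bar E, \epsilon}$ only ever reference boundedly many coordinates $\xi < o(\bar E)$, so in $N = \Ult(V, U)$ the relevant part of $i(\bar E)$ below the critical stage agrees with $\bar E$ in a way that lets the original generic $G$ serve as the first factor; then I would build a generic $G_{\text{tail}}$ for the quotient $\PP_{\text{tail}}$ over $N[G]$ by a master-condition / closure argument. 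Since $N[G]$ is sufficiently closed (recall $M \supseteq {}^{<\lambda}M$ and $N$ inherits closure from $U$), and the tail forcing is highly closed in $N$, one constructs $G_{\text{tail}}$ by meeting the $\le |\epsilon|$-many dense sets of $N[G]$ below a master condition that forces $\k$ into the generic club in the right way — here the fact that $\PP^*_{\bar E,\epsilon}$ is essentially $\Add(\lambda, \epsilon)$ and the Prikry part is $\k$-closed is what gives the required closure.

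Then I would define $i^+(\dot x_G) = i(\dot x)_{G * G_{\text{tail}}}$ and check it is well-defined and elementary using the Prikry property (Lemma 4.22) plus the factorization (Lemma 4.21): this is the standard "lifting criterion", namely that $i[G] \subseteq G * G_{\text{tail}}$, which holds because each condition $p \in G$ maps under $i$ into the first factor (its support and tree data being fixed by $i$ up to the stated closure), and the master condition forces compatibility with $G_{\text{tail}}$. Finally, $i^+$ witnesses that $\k$ is measurable in $V[G]$ (take the normal measure $\{X \subseteq \k : \k \in i^+(X)\}$, which lies in $V[G]$ because $\PP_{\bar E,\epsilon}$ is $\lambda^+$-c.c. and the relevant subsets of $\k$ are in $V[G]$).

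The main obstacle will be **the construction of the tail generic $G_{\text{tail}}$ and the verification that $G$ can serve as the bottom factor** of $i(\PP_{\bar E,\epsilon})$; this is where the hypothesis $\cf(o(\bar E)) > |\epsilon|$ is essential, since it guarantees that only a bounded (and hence "fixed by $i$" in the right sense) initial segment of the extender sequence is relevant to any actual condition, so that the structure of $\PP_{\bar E,\epsilon}$ as computed in $N$ below stage $\k$ is captured by the $V$-generic $G$. One must also be careful that the lifted embedding has the right critical point and that no spurious collapsing in $N[G*G_{\text{tail}}]$ obstructs recovering the measure in $V[G]$ — but the $\lambda^+$-chain condition and Prikry property handle these, exactly as in the analysis already carried out for $\PP_{\bar E,\epsilon}$.
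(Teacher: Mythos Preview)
The paper does not supply its own proof of this lemma; like Lemmas 4.25--4.27 it is taken from Merimovich \cite{mer5}. Your overall plan---lift a ground-model embedding with critical point $\k$ by factoring $i(\PP_{\bar E,\epsilon})$ below a master condition and constructing a tail generic---is the standard route to such preservation results and is the shape of argument one expects in Merimovich's proof as well.

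That said, your explanation of where the hypothesis $\cf(o(\bar E)) > |\epsilon|$ enters is not right. It is \emph{not} true that a condition in $\PP_{\bar E,\epsilon}$ references only boundedly many of the $E_\xi$: the successor sets of the tree $A^p$ lie in $E(d)=\bigcap_{\xi<o(\bar E)} E_\xi(d)$, and each $\bar\a\in\dom(f^p)$ carries a \emph{tail} of the extenders $E_\zeta$ (namely all $\zeta$ with $\a<j_{E_\zeta}(\lambda)$). So every condition entangles the whole sequence $\bar E$, and ``boundedness of support in the $\xi$-direction'' is simply not available. The genuine role of the hypothesis is rather the Radin-style one: since $(2^\k)^{V[G]}=|\epsilon|$ (this is recorded just before the lemma), one needs $o(\bar E)$ to have cofinality exceeding $|\epsilon|$ so that there is a stage $\xi<o(\bar E)$ which behaves as a repeat point relative to the extension---equivalently, so that the diagonalization producing the tail generic (or the verification that the candidate filter is an ultrafilter) can be carried through against all $|\epsilon|$ many obstructions. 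In other words, $\cf(o(\bar E))>|\epsilon|$ buys you enough \emph{length} in the extender sequence for the lifting to close off, not a bound on what individual conditions see.

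Your factorization-and-master-condition skeleton is sound once this is corrected. The point that will require the most care is checking that the quotient above the master condition is sufficiently $\leq^*$-closed in the target model to meet the required $|\epsilon|$ dense sets; this is where the Prikry property and the $\k$-closure of $\leq^*$ (together with the chain condition) do the real work, and where the cofinality hypothesis is actually invoked.
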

The next theorem follows from the above results, the factorization property of the forcing notion $\PP_{\bar{E}, \epsilon}$ and using some reflection arguments.
\begin{lemma}
Assume  $G$ is $\PP_{\bar{E}, \epsilon}$-generic over $V$. Let $\langle \k_\xi: \xi<\mu \rangle$ be an increasing enumeration of $C^{\bar{\k}}$, and for each $\xi<\mu,$ let $\lambda_\xi$ be the least inaccessible above $\k_\xi.$ Then
\begin{enumerate}
\item A cardinal $\eta<\k$ is collapsed in $V[G]$ iff there exists a limit ordinal
 $\xi<\mu,$ such that $\eta \in (\k_\xi, \gl_\xi),$ and then $\eta$ is collapsed to $\k_\xi,$

\item For each limit $\xi<\mu,$ $(\k_\xi^+)^{V[G]}= \gl_\xi.$
\end{enumerate}
\end{lemma}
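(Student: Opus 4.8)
The plan is to reduce the lemma, via the Factorization Lemma and a reflection argument, to the structure facts already recorded for a single supercompact extender-based Radin forcing: that such a forcing collapses the interval between its critical cardinal and the next inaccessible onto that cardinal, that it preserves that inaccessible, and that its direct-extension order is closed below the critical cardinal and of Prikry type. First I would dispose of $(2)$, which is formal once $(1)$ is available: for a limit $\xi<\mu$ the point $\k_\xi$ is a limit point of the club $C^{\bar\k}$, whose members are $V$-measurable, hence $V$-inaccessible; so $\k_\xi$ is inaccessible in $V$, whence $\l_{\xi'}\le\k_\xi$ for $\xi'<\xi$ and $\l_\xi\le\k_{\xi'}$ for $\xi<\xi'$. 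Thus $\l_\xi$ belongs to no interval $(\k_{\xi'},\l_{\xi'})$ with $\xi'$ a limit, so by $(1)$ it is not collapsed, whereas by $(1)$ every cardinal of $(\k_\xi,\l_\xi)$ is collapsed to $\k_\xi$; hence $(\k_\xi^+)^{V[G]}=\l_\xi$. It remains to prove $(1)$.

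For $(1)$ the key move is to factor $\PP_{\bar E,\epsilon}$ at each limit point of the generic club. Fix a limit $\xi<\mu$. By a density argument --- using the one-point extension $f_{\langle\nu\rangle}=g_\leftarrow{}^\frown g_\rightarrow$, whose left component $g_\leftarrow$ is exactly what plants a reflected copy of the forcing below $\nu(\bar\k)_0$, together with the fact that for an $f$-tree $A$ the set of $\langle\nu\rangle\in A$ for which $A\downarrow\nu$ is an $f_{\langle\nu\rangle\leftarrow}$-tree has $E_1(f)$-measure one --- one finds $p\in G$ whose lower part $p_\leftarrow\in\prod_{i<n}\PP_{\bar e_i}$ records $\k_\xi$ as a critical point $\crit(\bar e_{i_0})$ with $o(\bar e_{i_0})>0$, the last requirement being forced by $\k_\xi$ being a limit point of the club. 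The Factorization Lemma and the coordinatewise splitting of $\PP_{\bar E,\epsilon}\downarrow p_\leftarrow$ then give
\[
\PP_{\bar E,\epsilon}\downarrow p\;\simeq\;\mathbb A_\xi\times\mathbb B_\xi ,
\]
where $\mathbb A_\xi=\prod_{i\le i_0}\PP_{\bar e_i}\downarrow(p_\leftarrow)_i$ collects the factors of critical point $\le\k_\xi$ and $\mathbb B_\xi$ collects those of critical point $>\k_\xi$ together with $\PP_{\bar E,\epsilon}\downarrow p_\rightarrow$; write $G=G^{\mathbb A}\times G^{\mathbb B}$ accordingly. The heart of the matter is to recognise $\PP_{\bar e_{i_0}}$: by elementarity of the embeddings $j_{(\bar e_{i_0})_\zeta}$ and the shape of the one-point extensions it is isomorphic to the supercompact extender-based Radin forcing of Subsection~4.1 built in $V$ from $\bar e_{i_0}$ on $\k_\xi$, the reflected data being that $GCH$ holds and that $\l_\xi$, the least inaccessible above $\k_\xi$, is the closure point of the relevant ultrapowers --- precisely the reflection of the standing hypothesis that $\l$ is the least inaccessible above $\k$. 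Granting this, the lemma that $\PP_{\bar E,\epsilon}$ collapses $(\k,\l)$ onto $\k$ and the lemma that $\l$ is preserved, read with $(\k_\xi,\l_\xi)$ for $(\k,\l)$, show that $\PP_{\bar e_{i_0}}$ collapses every $V$-cardinal of $(\k_\xi,\l_\xi)$ onto $\k_\xi$ while preserving $\k_\xi$ and $\l_\xi$; the remaining factors $\PP_{\bar e_i}$, $i<i_0$, lie in $V_{\k_\xi}$ and so preserve all cardinals $\ge\k_\xi$. Hence $\mathbb A_\xi$ collapses exactly the $V$-cardinals of $(\k_\xi,\l_\xi)$ onto $\k_\xi$ and preserves $\k_\xi,\l_\xi$.

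Next I would check that $\mathbb B_\xi$ does no further damage in the relevant range. Since the members of $C^{\bar\k}$ are $V$-inaccessibles while $(\k_\xi,\l_\xi)$ contains none, the interval $(\k_\xi,\l_\xi)$ is disjoint from $C^{\bar\k}$, so every $\bar e_i$ occurring in $\mathbb B_\xi$ has $\crit(\bar e_i)\ge\l_\xi$; moreover $\langle\PP_{\bar E,\epsilon}\downarrow p_\rightarrow,\le^*\rangle$ is $\k$-closed (the measures $E(d)$ being $\k$-complete and the supports lying in $P_\l(\mathfrak{D})$) and each $\langle\PP_{\bar e_i},\le^*\rangle$ is $\crit(\bar e_i)$-closed. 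Therefore $\langle\mathbb B_\xi,\le^*\rangle$ is $\l_\xi$-closed, and $\langle\mathbb B_\xi,\le,\le^*\rangle$, as a product of Prikry-type forcings, is of Prikry type; by Lemma~2.4, forcing with $\mathbb B_\xi$ adds no new bounded subset of $\l_\xi$ and in particular preserves all cardinals $\le\l_\xi$. Passing to $V[G^{\mathbb B}]$ through $G=G^{\mathbb A}\times G^{\mathbb B}$ --- where the cardinals $\le\l_\xi$ are those of $V$ --- $\mathbb A_\xi$ still collapses precisely the $V$-cardinals of $(\k_\xi,\l_\xi)$ onto $\k_\xi$ and preserves $\l_\xi$. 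Carrying this out at every limit $\xi<\mu$, the intervals $(\k_\xi,\l_\xi)$ being pairwise disjoint, gives one direction of $(1)$ together with the statement that each cardinal of such an interval is collapsed to the corresponding $\k_\xi$; for the converse, an $\eta<\k$ in no such interval is preserved, because the part of $\PP_{\bar E,\epsilon}$ with critical points exceeding $\eta$ is $\le^*$-closed above $\eta$ and of Prikry type (Lemma~2.4 again), while the part with smaller critical points consists of the trivial factors at the non-limit points of $C^{\bar\k}$ together with the blocks $\mathbb A_{\xi'}$ at the earlier limit points, each collapsing only its own $(\k_{\xi'},\l_{\xi'})$, none of which contains $\eta$.

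The conceptual content is carried by the Factorization Lemma; the laborious point, and the one I expect to be the genuine obstacle, is the identification of $\PP_{\bar e_{i_0}}$ with a reflected copy of $\PP_{\bar E,\epsilon}$ --- one must match the restriction of the generic club $C^{\bar\k}$ to the block below $\k_\xi$ with the extender-sequence bookkeeping of the reflected forcing and confirm that $\l_\xi$ is the right top --- together with the allied borderline case in which the club point immediately above $\k_\xi$ equals $\l_\xi$ and itself carries positive Radin order, where the survival of $\l_\xi$ must be drawn from the reflected preservation lemma rather than from closure of $\mathbb B_\xi$. These are bookkeeping matters of the kind handled in detail in Merimovich's analysis of this forcing, and I would import the required reflection facts from there.
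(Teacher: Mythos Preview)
Your proposal is correct and follows essentially the same approach as the paper: the paper's own ``proof'' of this lemma is a one-line sketch stating that the result follows from the earlier lemmas, the Factorization Lemma, and reflection arguments, which is precisely the strategy you have fleshed out in detail. Your treatment of the bookkeeping (identifying $\PP_{\bar e_{i_0}}$ as a reflected copy of the forcing, handling the closure of the upper factor via the Prikry property) goes well beyond what the paper supplies and is the intended content of those words.
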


\subsection{Projected supercompact extender based Radin forcing}
Here we define our projected forcing $\PP_{\bar{E}, \epsilon}^\pi$. The forcing construction is very similar to that of $\PP_{\bar{E}, \epsilon},$ and so we just list the main changes which are required to define  $\PP_{\bar{E}, \epsilon}^\pi$. We add the superscript $^{\pi}$ to denote it is related to the projected forcing.

\begin{definition}
$\PP^{*,\pi}_{\bar{E}, \epsilon}$ consists of all functions $f: d \rightarrow$$^{<\omega}\mathfrak{R}$ such that
\begin{enumerate}
\item $\bar{\k}\in d\in P_\l(\mathfrak{D})$,
\item $f(\bar{\k})= \langle f_0(\bar{\k}), \dots, f_{k-1}(\bar{\k})            \rangle$ is an increasing sequence in $\mathfrak{R},$
\item $\forall \bar{\k} \neq \bar{\a}\in d, f(\bar{\a})=\langle      \rangle$ (here is the place we use the extra assumption $\langle \rangle \in \mathfrak{R}$),
\item For each  $i< |f(\bar{\k})|,$ $(o(f_i(\bar{\k})) < o(\bar{\k}) ),$
\item The sequence $\langle  o(f_i(\bar{\k})): i< |f(\bar{\k})|       \rangle$ is non-increasing.
\end{enumerate}
\end{definition}
\begin{definition}
For $f, g \in \PP^{*, \pi}_{\bar{E}, \epsilon},$ we say $f$ is an extension of $g$ $(f \leq^{*,\pi}_{\PP^{*,pi}_{\bar{E}, \epsilon}} g)$ if $f \supseteq g.$
\end{definition}
\begin{remark}
 $\PP^{*,\pi}_{\bar{E}, \epsilon}$ is forcing isomorphic to the trivial forcing.
\end{remark}
\begin{lemma}
$\langle   \PP^{*,\pi}_{\bar{E}, \epsilon},  \leq^{*,\pi}_{\PP^{*,pi}_{\bar{E}, \epsilon}} \rangle$ is $\k^+-c.c.$
\end{lemma}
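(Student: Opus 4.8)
The plan is to copy the proof of Lemma 3.19 (the analogous statement for $\PP^{*,\pi}_{E}$) essentially line by line. The underlying observation is that, by clause $(3)$ of the definition of $\PP^{*,\pi}_{\bar{E},\epsilon}$, a condition $f$ is completely determined, up to the choice of its domain, by the single value $f(\bar{\k})$, since $f(\bar{\a})=\langle\rangle$ for every $\bar{\a}\in\dom(f)$ with $\bar{\a}\neq\bar{\k}$. Accordingly the proof splits into two claims: (i) if $f,g\in\PP^{*,\pi}_{\bar{E},\epsilon}$ satisfy $f(\bar{\k})=g(\bar{\k})$ then $f$ and $g$ are compatible; and (ii) there are at most $\k$ possible values for $f(\bar{\k})$. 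Granting these, any antichain $A$ injects into the set of possible values of $f(\bar{\k})$ via the map $f\mapsto f(\bar{\k})$ (by (i), distinct members of $A$ must differ at $\bar{\k}$), so $|A|\le\k<\k^{+}$, which is the assertion.

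For (i) I will simply take $f\cup g$ as the common extension. Its domain $\dom(f)\cup\dom(g)$ still lies in $P_\l(\mathfrak{D})$ because $\l$ is regular, and it contains $\bar{\k}$; the functions $f$ and $g$ agree on $\dom(f)\cap\dom(g)$ (at $\bar{\k}$ by hypothesis, and at every other coordinate since both take the value $\langle\rangle$ there), so $f\cup g$ is a well-defined function; and the defining clauses of $\PP^{*,\pi}_{\bar{E},\epsilon}$ are inherited, since $(f\cup g)(\bar{\k})=f(\bar{\k})$ and $(f\cup g)(\bar{\a})=\langle\rangle$ for $\bar{\a}\neq\bar{\k}$. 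Hence $f\cup g\in\PP^{*,\pi}_{\bar{E},\epsilon}$ with $f\cup g\supseteq f$ and $f\cup g\supseteq g$, that is, $f\cup g\leq^{*,\pi}f$ and $f\cup g\leq^{*,\pi}g$.

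For (ii) — the one point that uses the structure of the forcing rather than pure bookkeeping — I will show that every value $f(\bar{\k})$ lies in $V_\k$, so that there are at most $|{}^{<\omega}V_\k|=|V_\k|=\k$ of them. Recall $f(\bar{\k})$ is a finite $<$-increasing sequence $\langle\bar{\nu}_0,\dots,\bar{\nu}_{k-1}\rangle$ of extender sequences from $\mathfrak{R}$ with $o(\bar{\nu}_i)<o(\bar{\k})$ and with the orders non-increasing; the key is that each $\bar{\nu}_i$ is an extender sequence \emph{below $\k$}, i.e., $(\bar{\nu}_i)_0<\k$ and every extender occurring in $\bar{\nu}_i$ has critical point below $\k$. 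This is part of the definition of $\PP^{*,\pi}_{\bar{E},\epsilon}$ (the entries of $f(\bar{\k})$ are a finite piece of an approximation to the generic club through $\k$), and it is consistent with the elementary facts that, by Definition 4.4, the extenders in $\bar{\nu}_i$ share a common critical point $\gamma\le(\bar{\nu}_i)_0$ and $(\bar{\nu}_i)_0<j_{e_0}(\a)$ for $\a$ the closure point of the first ultrapower, together with the requirement — inherited from $\bar{E}$, whose critical point is $\k$ — that $\gamma<\k$; inaccessibility of $\k$ then gives $(\bar{\nu}_i)_0<\k$. Finally $\bar{\nu}_i\in V_\k$: its length $o(\bar{\nu}_i)$ is at most the Mitchell order of the extenders on $\gamma$, hence below $\k$ by inaccessibility, and each of its entries (an ordinal below $\k$, or an extender on $\gamma<\k$) lies in $V_\k$, which — as $\k$ is inaccessible — is closed under sequences of length $<\k$; so $\bar{\nu}_i\in V_\k$, whence $f(\bar{\k})\in{}^{<\omega}V_\k$. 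The step I expect to need the most care is precisely this verification that the admissible entries of $f(\bar{\k})$ are small, since this is exactly what separates the present $\k^{+}$-bound from the crude $\l^{+}$-bound that would follow merely from $|\mathfrak{R}|\le|V_\l|=\l$; the compatibility claim (i) and the final counting argument are routine.
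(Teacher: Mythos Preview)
Your proposal is correct and follows essentially the same approach as the paper's proof: both reduce the chain condition to (i) compatibility of conditions agreeing at $\bar{\k}$ and (ii) the bound $f(\bar{\k})\in{}^{<\omega}V_\k$, which gives at most $\k$ possible values. The paper simply asserts (ii) without argument, whereas you attempt to justify it from the structure of extender sequences; your justification is somewhat informal (the constraint that entries of $f(\bar{\k})$ have critical point below $\k$ is not explicit in Definition~4.30 as written, but is implicit in Merimovich's setup and in how one-point extensions populate $f(\bar{\k})$ via $\nu(\bar{\k})$), but this matches the level of detail the paper itself provides.
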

\begin{proof}
The result follows from the fact that for $f,g \in \PP^{*,pi}_{\bar{E}, \epsilon},$ if $f(\bar{\k})=g(\bar{\k}),$ then $f$ and $g$ are compatible, and  that
there are only $\k$ possible choices for $f(\bar{\k})$, as each  $f(\bar{\k}) \in V_\k^{<\omega}.$
\end{proof}
\begin{definition}
Assume $f\in \PP^{*,\pi}_{\bar{E}, \epsilon}$ and $\nu\in OB(f).$ Define $g=f_{\langle \nu \rangle}$ to be of the form $g= g_{\leftarrow}$$^{\frown}g_{\rightarrow}$ (the case $g_{\leftarrow}=\emptyset$ is allowed) where
\begin{enumerate}
\item $\dom(g_{\rightarrow})=\dom(f),$
\item For each $\bar{\a}\in \dom(g_{\rightarrow})$
\begin{center}
 $g_{\rightarrow}(\bar{\a}) = \left\{ \begin{array}{l}
       f(\bar{\a}) \upharpoonright k$$^{\frown} \langle \nu(\bar{\a}) \rangle  \hspace{1.1cm} \text{ if } \bar{\a}=\bar{\k}, \nu(\bar{\a}) > f_{|f(\bar{\a})|-1}(\bar{\a}),\\
       \langle \rangle  \hspace{3.5cm} \text{}  Otherwise.
 \end{array} \right.$
\end{center}
where
\begin{center}
$k=\min\{l \leq |f(\bar{\k})|: \forall l \leq i < |f(\bar{\k})|, o(f_i(\bar{\k})) < o(\nu(\bar{\k}))        \}.$
\end{center}
The above value of $k$ is defined so as to ensure that $\langle o(f_i(\bar{\k})) : i<k \rangle ^{\frown} \langle   o(\nu(\bar{\k}))   \rangle$ is non-increasing.
\item $\dom(g_{\leftarrow})=\{\nu(\bar{\a}): \bar{\a}\in \dom(\nu),  o(\nu(\bar{\a})) >0         \}$,
\item For each $\bar{\a}\in \dom(\nu)$ with $o(\nu(\bar{\a})) >0$ we have
\begin{center}
$g_{\leftarrow}(\nu(\bar{\a}))= f(\bar{\a}) \upharpoonright (|f(\bar{\a})|\setminus k),$
\end{center}
where $k$ is defined as above.
\end{enumerate}
\end{definition}
\begin{definition}
$\PP^\pi_{\bar{E}, \epsilon, \rightarrow}$ consists of pairs $p= \langle f, A \rangle$ where
\begin{enumerate}
\item $f\in \PP^{*,\pi}_{\bar{E}, \epsilon},$
\item $A$ is an $f$-tree such that for each $\langle \nu \rangle \in A$
\begin{center}
$f_{|f(\bar{\k})|-1}(\bar{\k}) < \nu(\bar{\k})$
\end{center}
\end{enumerate}
\end{definition}
\begin{definition}
Let $p, q \in \PP^\pi_{\bar{E}, \epsilon, \rightarrow}.$ We say $p \leq^{*,\pi}_{\PP_{\bar{E}, \epsilon, \rightarrow}} q$ ($p$ is a Prikry extension of $q$) if
\begin{enumerate}
\item $f^p \leq^{*,\pi}_{\PP^{*,\pi}_{\bar{E}, \epsilon}} f^q,$
\item $A^p \upharpoonright \dom(f^q) \subseteq A^q$.
\end{enumerate}
\end{definition}
We are now ready to define the forcing notion $\PP^\pi_{\bar{E}, \epsilon}.$
\begin{definition}
A condition $p$ in the forcing notion $\PP^\pi_{\bar{E}, \epsilon}$ is of the form $p_{\leftarrow}$$^{\frown} p_{\rightarrow}$ where
\begin{enumerate}
\item $p_{\rightarrow} \in \PP^\pi_{\bar{E}, \epsilon, \rightarrow},$
\item $p_{\leftarrow} \in \prod_{i<n} \PP^\pi_{\bar{e}_i}$ $(n<\omega)$, where $\bar{e}_i$ are extender sequences such that
\begin{enumerate}
\item [(2-1)] $o(\bar{e}_i) \leq o(\bar{E}),$
\item [(2-2)] $\bar{e}_i \in V_{crit(\bar{e}_{i+1})},$
\item [(2-3)] $\langle \nu \rangle \in A^{p_{\rightarrow}} \Rightarrow \nu(\bar{\k})_0 > crit(\bar{e}_{n-1}).$
\end{enumerate}
\end{enumerate}
\end{definition}
\begin{definition}
Let $p, q\in \PP^\pi_{\bar{E}, \epsilon}.$ Then $p \leq^{*,\pi}_{\PP_{\bar{E}, \epsilon}} q$ ($p$ is a Prikry extension of $q$) if:
\begin{enumerate}
\item $p_{\rightarrow} \leq^{*,\pi} q_{\rightarrow},$
\item $p_{\leftarrow} \leq^{*,\pi} q_{\leftarrow}.$
\end{enumerate}
\end{definition}
\begin{definition}
Let $p, q \in \PP^\pi_{\bar{E}, \epsilon}.$ Then $p \leq_{\PP^\pi_{\bar{E}, \epsilon}} q$ ($p$ is stronger than $q$) if $p= r^{\frown} s$
and there is $\langle \nu_0, \dots, \nu_{n-1} \rangle \in A^{q_{\rightarrow}}$ such that
\begin{enumerate}
\item $s \leq^{*,\pi}_{\PP^\pi_{\bar{E}, \epsilon}} q_{\rightarrow \langle \nu_0, \dots, \nu_{n-1} \rangle},$
\item $r \leq q_{\leftarrow}$.
\end{enumerate}
\end{definition}
In the next subsection, we produce a projection $\pi: \PP_{\bar{E}, \epsilon} \rightarrow \PP_{\bar{E}, \epsilon}^{\pi}$ which is of Prikry type, and we will use it in subsection 4.4, to prove the main properties of the projected forcing.
\subsection{Projecting $\PP_{\bar{E}, \epsilon}$ to $\PP_{\bar{E}, \epsilon}^{\pi}$}
We now produce a Prikry type projection $\pi: \PP_{\bar{E}, \epsilon} \rightarrow \PP_{\bar{E}, \epsilon}^{\pi}$ from  $\PP_{\bar{E}, \epsilon}$ into $\PP_{\bar{E}, \epsilon}^{\pi}.$ We do it is a few steps, by producing projection between earlier forcing notions and their corresponding projected versions.

We start with defining a projection from $\PP^*_{\bar{E}, \epsilon}$ to $\PP^{*,\pi}_{\bar{E}, \epsilon}.$ Let $f\in \PP^*_{\bar{E}, \epsilon}, f: d \rightarrow \mathfrak{R}.$ Let $\pi^*(f)$ be a function with the same domain as $f$, such that
\begin{center}
 $\pi^*(f)(\bar{\a}) = \left\{ \begin{array}{l}
       f(\bar{\a})   \hspace{1.4cm} \text{ if } \bar{\a}=\bar{\k},\\
       \langle  \rangle  \hspace{2.cm} \text{}  Otherwise.
 \end{array} \right.$
\end{center}
It is clear that $\pi^*(f)\in \PP^{*,\pi}_{\bar{E}, \epsilon}$, and hence
\begin{center}
 $\pi^*: \PP^*_{\bar{E}, \epsilon} \rightarrow \PP^{*,\pi}_{\bar{E}, \epsilon}$
\end{center}
is well-defined.
\begin{lemma}
$\pi^*$ is a projection from $\PP^*_{\bar{E}, \epsilon}$ to $\PP^{*,\pi}_{\bar{E}, \epsilon}.$
\end{lemma}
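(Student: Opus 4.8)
The plan is to verify the three clauses in Definition 2.1 directly for the map $\pi^*$. Clause (1), that $\pi^*$ sends the maximum condition to the maximum condition, is immediate: the largest condition of $\PP^*_{\bar E,\epsilon}$ is the function with domain $\{\bar\k\}$ (or the smallest admissible domain) carrying the empty sequence everywhere, and $\pi^*$ fixes the value at $\bar\k$ and sends every other coordinate to $\langle\rangle$, which is exactly the largest condition of $\PP^{*,\pi}_{\bar E,\epsilon}$. Clause (2), order preservation, is also routine: if $f\le^*_{\PP^*_{\bar E,\epsilon}} g$, i.e.\ $f\supseteq g$, then $\dom(g)\subseteq\dom(f)$ and $f$ agrees with $g$ on $\dom(g)$; applying the coordinatewise definition of $\pi^*$ we get $\pi^*(f)\restriction\dom(\pi^*(g))=\pi^*(g)$, so $\pi^*(f)\supseteq\pi^*(g)$, i.e.\ $\pi^*(f)\le^{*,\pi}\pi^*(g)$. (Here one uses that the recipe defining $\pi^*(f)(\bar\a)$ depends only on whether $\bar\a=\bar\k$ and on $f(\bar\a)$, not on the rest of $f$.)

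The substance of the lemma is clause (3): given $f\in\PP^*_{\bar E,\epsilon}$ and $g^*\in\PP^{*,\pi}_{\bar E,\epsilon}$ with $g^*\le^{*,\pi}\pi^*(f)$, I must produce $f^*\le^*_{\PP^*_{\bar E,\epsilon}} f$ with $\pi^*(f^*)\le^{*,\pi} g^*$. The idea is to "paste" $g^*$ onto $f$: since $g^*\supseteq\pi^*(f)$, the condition $g^*$ extends $\pi^*(f)$ possibly by enlarging the domain and possibly by lengthening the sequence at $\bar\k$. Define $f^*$ to have domain $\dom(f)\cup\dom(g^*)$, set $f^*(\bar\k)=g^*(\bar\k)$ (which extends $f(\bar\k)$ since $g^*(\bar\k)\supseteq\pi^*(f)(\bar\k)=f(\bar\k)$), keep $f^*(\bar\a)=f(\bar\a)$ for $\bar\a\in\dom(f)\setminus\{\bar\k\}$, and for $\bar\a\in\dom(g^*)\setminus\dom(f)$ with $\bar\a\neq\bar\k$ set $f^*(\bar\a)=\langle\rangle$. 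Then $f^*\supseteq f$, so $f^*\le^*_{\PP^*_{\bar E,\epsilon}} f$, and one checks that $f^*$ still satisfies the defining conditions (3)--(4) of Definition 4.15, which is automatic because the only nontrivial coordinate of $f^*$ is $\bar\k$, where it agrees with $g^*$, and $g^*$ already satisfies the analogous conditions by membership in $\PP^{*,\pi}_{\bar E,\epsilon}$. Finally $\pi^*(f^*)(\bar\k)=f^*(\bar\k)=g^*(\bar\k)$ and $\pi^*(f^*)(\bar\a)=\langle\rangle=g^*(\bar\a)$ for all other $\bar\a\in\dom(g^*)$, so $\pi^*(f^*)\supseteq g^*$, i.e.\ $\pi^*(f^*)\le^{*,\pi} g^*$, as required.

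The only point demanding genuine care — and the step I would flag as the main obstacle — is checking that the pasted function $f^*$ genuinely lies in $\PP^*_{\bar E,\epsilon}$, in particular that enlarging the domain from $\dom(f)$ to $\dom(f)\cup\dom(g^*)$ keeps the domain in $P_\l(\mathfrak D)$ and that the value $g^*(\bar\k)$ is admissible at the $\bar\k$-coordinate of the larger domain. The first is fine because $\dom(g^*)\in P_\l(\mathfrak D)$ and $\l$ is regular (indeed inaccessible), so the union of two sets of size $<\l$ is again of size $<\l$; the second holds because the admissibility conditions on $f^*(\bar\k)$ — increasing in $\mathfrak R$, each entry of order $<o(\bar\k)$, orders non-increasing — are exactly the conditions $g^*$ already satisfies as an element of $\PP^{*,\pi}_{\bar E,\epsilon}$, and they do not reference the rest of the domain. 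Once this bookkeeping is dispatched, the lemma follows. This projection $\pi^*$ will then serve as the first link in the chain of projections building up to $\pi:\PP_{\bar E,\epsilon}\to\PP^\pi_{\bar E,\epsilon}$ in the next subsection.
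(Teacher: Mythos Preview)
Your proposal is correct and follows essentially the same approach as the paper: both construct the witness $f^*$ by taking domain $\dom(g^*)$, keeping the values of $f$ on $\dom(f)\setminus\{\bar\k\}$, setting $f^*(\bar\k)=g^*(\bar\k)$, and using $\langle\rangle$ on the new coordinates. One small inaccuracy worth noting: since $g^*\le^{*,\pi}\pi^*(f)$ means $g^*\supseteq\pi^*(f)$ as \emph{functions}, you in fact have $g^*(\bar\k)=\pi^*(f)(\bar\k)=f(\bar\k)$ exactly, so no lengthening at $\bar\k$ is possible; and the phrase ``the only nontrivial coordinate of $f^*$ is $\bar\k$'' is off, since $f^*$ inherits possibly nontrivial values $f(\bar\a)$ on $\dom(f)\setminus\{\bar\k\}$ --- but these already satisfy the required clauses because $f\in\PP^*_{\bar E,\epsilon}$, so the verification goes through unchanged.
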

\begin{proof}
Let $f\in \PP^{*}_{\bar{E}, \epsilon}, g\in \PP^{*,\pi}_{\bar{E}, \epsilon}$ and $g\leq^{*,\pi}_{\PP^{*,\pi}_{\bar{E}, \epsilon}} \pi^*(f),$ which means $g \supseteq \pi^*(f).$
Let $f^* \in \PP^{*}_{\bar{E}, \epsilon}$ have the same domain as $\dom(g),$ so that:
 \begin{center}
 $f^*(\bar{\a}) = \left\{ \begin{array}{l}
       g(\bar{\a})   \hspace{1.4cm} \text{ if } \bar{\a}=\bar{\k},\\
        f(\bar{\a})  \hspace{1.4cm} \text{ if } \bar{\a}\neq \bar{\k}, \bar{\a}\in \dom(f),\\
       \langle  \rangle  \hspace{2.cm} \text{}  Otherwise.
 \end{array} \right.$
\end{center}
Then $f^*\in \PP^{*}_{\bar{E}, \epsilon}, f^* \leq^*_{\PP^*_{\bar{E}, \epsilon}} f$ and $\pi^*(f^*) \leq^{*,\pi}_{\PP^{*,\pi}_{\bar{E}, \epsilon}} g.$
\end{proof}
We now produce a projection from $\PP_{\bar{E}, \epsilon, \rightarrow}$ into $\PP^{\pi}_{\bar{E}, \epsilon, \rightarrow}.$
Let $p=\langle f, A \rangle\in \PP_{\bar{E}, \epsilon, \rightarrow},$ and define $\pi_{\rightarrow}(\langle f, A \rangle)$ to be the pair $\langle \pi^*(f), A \rangle.$
Clearly $\pi_{\rightarrow}(\langle f, A \rangle) \in \PP^{\pi}_{\bar{E}, \epsilon, \rightarrow},$ and so
\begin{center}
$\pi_{\rightarrow}: \PP_{\bar{E}, \epsilon, \rightarrow} \rightarrow \PP^{\pi}_{\bar{E}, \epsilon, \rightarrow}$
\end{center}
is well-defined.
\begin{lemma}
$\pi_{\rightarrow}$ is a projection from $\langle \PP_{\bar{E}, \epsilon, \rightarrow}, \leq^*_{\PP_{\bar{E}, \epsilon, \rightarrow}}   \rangle$ into $\langle  \PP^\pi_{\bar{E}, \epsilon, \rightarrow}, \leq^{*,\pi}_{\PP^\pi_{\bar{E}, \epsilon, \rightarrow}}       \rangle.$
\end{lemma}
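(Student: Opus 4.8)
The plan is to verify directly the three clauses in the definition of a projection (Definition 2.1) for $\pi_{\rightarrow}$, with respect to the Prikry orderings $\leq^*_{\PP_{\bar{E}, \epsilon, \rightarrow}}$ and $\leq^{*,\pi}_{\PP^\pi_{\bar{E}, \epsilon, \rightarrow}}$, reducing the one nontrivial clause to the preceding lemma. The observation that makes everything go through is that $\pi^*$ never changes domains: $\dom(\pi^*(f)) = \dom(f)$. Since the measures $E_\xi(d)$, $E(d)$, and hence the notion of a $d$-tree, depend only on $d$, any $f$-tree $A$ is automatically a $\pi^*(f)$-tree; and the extra requirement in the definition of $\PP^\pi_{\bar{E}, \epsilon, \rightarrow}$, namely $\pi^*(f)_{|\pi^*(f)(\bar{\k})| - 1}(\bar{\k}) < \nu(\bar{\k})$ for all $\langle \nu \rangle \in A$, is just the $\bar{\a} = \bar{\k}$ instance of the corresponding requirement in the definition of $\PP_{\bar{E}, \epsilon, \rightarrow}$ (using $\pi^*(f)(\bar{\k}) = f(\bar{\k})$). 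Hence $\pi_{\rightarrow}$ does map $\PP_{\bar{E}, \epsilon, \rightarrow}$ into $\PP^\pi_{\bar{E}, \epsilon, \rightarrow}$, and clause (1) holds since $\pi^*$ sends the trivial condition to the trivial condition.

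For clause (2), I would take $p = \langle f^p, A^p \rangle \leq^*_{\PP_{\bar{E}, \epsilon, \rightarrow}} q = \langle f^q, A^q \rangle$, so $f^p \supseteq f^q$ and $A^p \upharpoonright \dom(f^q) \subseteq A^q$. By the preceding lemma $\pi^*$ is a projection, hence order preserving, so $\pi^*(f^p) \supseteq \pi^*(f^q)$; and since $\dom(\pi^*(f^q)) = \dom(f^q)$ we get $A^p \upharpoonright \dom(\pi^*(f^q)) = A^p \upharpoonright \dom(f^q) \subseteq A^q$. These two facts are precisely $\pi_{\rightarrow}(p) \leq^{*,\pi}_{\PP^\pi_{\bar{E}, \epsilon, \rightarrow}} \pi_{\rightarrow}(q)$.

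For clause (3), suppose $p = \langle f, A \rangle \in \PP_{\bar{E}, \epsilon, \rightarrow}$, $q = \langle g, B \rangle \in \PP^\pi_{\bar{E}, \epsilon, \rightarrow}$ and $q \leq^{*,\pi}_{\PP^\pi_{\bar{E}, \epsilon, \rightarrow}} \pi_{\rightarrow}(p) = \langle \pi^*(f), A \rangle$, that is, $g \supseteq \pi^*(f)$ and $B \upharpoonright \dom(f) \subseteq A$ (using $\dom(\pi^*(f)) = \dom(f)$). I would apply the preceding lemma to $f$ and $g$: it produces $f^* \leq^*_{\PP^*_{\bar{E}, \epsilon}} f$ with $\pi^*(f^*) \leq^{*,\pi}_{\PP^{*,\pi}_{\bar{E}, \epsilon}} g$, and indeed the $f^*$ exhibited there has $\dom(f^*) = \dom(g)$, with $f^*(\bar{\k}) = g(\bar{\k})$, $f^*(\bar{\a}) = f(\bar{\a})$ for $\bar{\a} \in \dom(f) \setminus \{\bar{\k}\}$, and $f^*(\bar{\a}) = \langle \rangle$ otherwise; in particular $\pi^*(f^*) = g$. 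Now put $p^* = \langle f^*, B \rangle$. Since $\dom(f^*) = \dom(g)$, the $g$-tree $B$ is an $f^*$-tree; and $p^* \in \PP_{\bar{E}, \epsilon, \rightarrow}$ because the one-point side condition holds at $\bar{\k}$ (as $f^*(\bar{\k}) = g(\bar{\k})$ and $\langle g, B \rangle \in \PP^\pi_{\bar{E}, \epsilon, \rightarrow}$), holds at each $\bar{\a} \in \dom(f) \setminus \{\bar{\k}\}$ (by $B \upharpoonright \dom(f) \subseteq A$, $\langle f, A \rangle \in \PP_{\bar{E}, \epsilon, \rightarrow}$ and $f^*(\bar{\a}) = f(\bar{\a})$), and is vacuous at the remaining coordinates, where $f^*(\bar{\a}) = \langle \rangle$ (there being no constraint when the stem at $\bar{\a}$ is empty). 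Finally $f^* \supseteq f$ together with $B \upharpoonright \dom(f) \subseteq A$ gives $p^* \leq^*_{\PP_{\bar{E}, \epsilon, \rightarrow}} p$, while $\pi_{\rightarrow}(p^*) = \langle \pi^*(f^*), B \rangle = \langle g, B \rangle = q$, so trivially $\pi_{\rightarrow}(p^*) \leq^{*,\pi}_{\PP^\pi_{\bar{E}, \epsilon, \rightarrow}} q$. This establishes clause (3).

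The verification is essentially bookkeeping; the one point needing care is that the tree $B$ of $q$ can be recycled verbatim as the tree of the lift $p^*$. This is exactly where the remark that $\pi^*$ leaves domains unchanged is used: the measures $E(d)$ and the tree conditions are therefore unaffected, and the coordinates added when enlarging $\dom(f)$ to $\dom(g)$ all carry the empty sequence, on which the side conditions of $\PP_{\bar{E}, \epsilon, \rightarrow}$ impose nothing. One may also notice that the same computation yields $\pi_{\rightarrow}(p^*) = q$, not merely $\leq^{*,\pi}$, a strengthening that will be convenient later.
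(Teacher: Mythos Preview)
Your proof is correct and follows essentially the same approach as the paper: both verify clause~(3) by invoking the preceding lemma on $\pi^*$ to obtain $f^*$ with $\dom(f^*)=\dom(g)$, and then set $p^*=\langle f^*, B\rangle$. You simply supply more of the verification details (well-definedness of $p^*$, the side conditions on each coordinate, and the observation that in fact $\pi_{\rightarrow}(p^*)=q$) that the paper leaves implicit.
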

\begin{proof}
Let
$p=\langle f, A \rangle\in \PP_{\bar{E}, \epsilon, \rightarrow}, q=\langle g, B \rangle \in \PP^\pi_{\bar{E}, \epsilon, \rightarrow}$ and
$q \leq^{*,\pi}_{\PP^\pi_{\bar{E}, \epsilon, \rightarrow}} \pi_{\rightarrow}(p)=\langle \pi^*(f), A \rangle.$ This means
\begin{enumerate}
\item $g \leq^{*, \pi}_{\PP^{*,\pi}_{\bar{E}, \epsilon}} \pi^*(f),$
\item $B \upharpoonright \dom(\pi^*(f)) \subseteq A.$
\end{enumerate}
By Lemma 4.45, there exists $f^* \in \PP^*_{\bar{E}, \epsilon}$ with $f^* \leq^*_{\PP_{\bar{E}, \epsilon}}$ and $\pi(f^*) \leq^{*,\pi}_{\PP^{*, \pi}_{\bar{E}, \epsilon}} g.$
Let $p^*= \langle f^*, B \rangle \in  \PP_{\bar{E}, \epsilon, \rightarrow}$ and $\pi_{\rightarrow}(p^*) \leq^{*,\pi}_{\PP^\pi_{\bar{E}, \epsilon, \rightarrow}} q.$
\end{proof}
Finally we present a projection between our main forcing notions, namely from the forcing notion $\PP_{\bar{E}, \epsilon}$ into $\PP_{\bar{E}, \epsilon}^{\pi}$, further we show that it is in fact of Prikry type.
\begin{theorem}
There is $\pi: \PP_{\bar{E}, \epsilon} \rightarrow \PP_{\bar{E}, \epsilon}^{\pi}$ which is a Prikry type projection.
\end{theorem}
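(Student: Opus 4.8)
The plan is to build $\pi$ from the coordinatewise projections already at hand, by recursion on the Mitchell order $o(\bar{E})$. Recall that a condition of $\PP_{\bar{E}, \epsilon}$ has the shape $p = p_\leftarrow \, {}^{\frown} p_\rightarrow$ with $p_\rightarrow \in \PP_{\bar{E}, \epsilon, \rightarrow}$ and $p_\leftarrow = \langle p_0, \dots, p_{n-1} \rangle \in \prod_{i<n}\PP_{\bar{e}_i}$, and that $\PP^\pi_{\bar{E}, \epsilon}$ has the analogous shape. I would prove simultaneously the slightly more general statement that every extender sequence $\bar{e}$ with $o(\bar{e}) \le o(\bar{E})$ (with the same critical and closure points) carries a Prikry type projection $\pi^{\bar{e}}\colon \PP_{\bar{e}, \epsilon} \to \PP^\pi_{\bar{e}, \epsilon}$; the base case ($o(\bar{e}) \le 1$) is the construction of Section~3 (Theorem~3.24), and the inductive step uses Lemmas~4.45 and~4.46 on the $\rightarrow$-part together with the inductive hypothesis on the $\leftarrow$-part, using that the extender sequences occurring as factors of lower parts all have order $< o(\bar{E})$. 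Concretely, for $p = p_\leftarrow \, {}^{\frown} p_\rightarrow$ as above I would set
\[
\pi(p) = \langle \pi^{\bar{e}_0}(p_0), \dots, \pi^{\bar{e}_{n-1}}(p_{n-1}) \rangle \, {}^{\frown} \, \pi_\rightarrow(p_\rightarrow),
\]
which amounts to replacing the value $f(\bar{\a})$ by $\langle \rangle$ at every coordinate $\bar{\a} \neq \bar{\k}$ occurring anywhere inside $p$ while leaving every tree untouched. Since all trees are unchanged --- hence so are all the critical points $crit(\bar{e}_i)$ and all the values $\nu(\bar{\k})_0$ for $\langle \nu \rangle \in A^{p_\rightarrow}$ --- the condition $\pi(p)$ still satisfies the constraint $\nu(\bar{\k})_0 > crit(\bar{e}_{n-1})$ in the definition of $\PP^\pi_{\bar{E}, \epsilon}$, so $\pi$ is well defined into $\PP^\pi_{\bar{E}, \epsilon}$. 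As $\le$ and $\le^*$ on $\PP_{\bar{E}, \epsilon}$ are computed coordinatewise from the corresponding orders on the $\PP_{\bar{e}_i}$ and on $\PP_{\bar{E}, \epsilon, \rightarrow}$, order preservation for both relations --- clauses~(1) and~(2) of the definition of a projection of Prikry type --- is immediate from Lemma~4.46 and the inductive hypothesis.

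The substance is the lifting clause. Let $p \in \PP_{\bar{E}, \epsilon}$ and $q = r \, {}^{\frown} s \in \PP^\pi_{\bar{E}, \epsilon}$ with $q \le^\pi \pi(p)$, and let $\langle \nu_0, \dots, \nu_{m-1} \rangle \in A^{\pi(p)_\rightarrow}$ witness this; since $A^{\pi(p)_\rightarrow} = A^{p_\rightarrow}$ this is the \emph{same} sequence inside $A^{p_\rightarrow}$. Form the one point extension $p_{\langle \nu_0, \dots, \nu_{m-1} \rangle} = p'_\leftarrow \, {}^{\frown} p'_\rightarrow$ inside $\PP_{\bar{E}, \epsilon}$. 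For the right coordinate I would copy the construction from the proof of Theorem~3.24: let $p^*_\rightarrow = \langle g, A^s \rangle$, where $g$ agrees with $f^{p'_\rightarrow}$ off $\bar{\k}$, has $g(\bar{\k}) = f^s(\bar{\k})$, and equals $\langle \rangle$ on $\dom(f^s) \setminus \dom(f^{p_\rightarrow})$; then $g \in \PP^*_{\bar{E}, \epsilon}$, $p^*_\rightarrow \le^* p'_\rightarrow$ and $\pi_\rightarrow(p^*_\rightarrow) \le^{*,\pi} s$. For the left coordinate, note that $p'_\leftarrow$ is a finite product of forcings $\PP_{\bar{f}_j}$ whose extender sequences are the old $\bar{e}_i$ together with the sequences $\nu_\ell(\bar{\k})$ ($\ell < m$) spawned by the one point extension through the reflected trees $A^{p_\rightarrow} \downarrow \nu_\ell$; each $\nu_\ell(\bar{\k})$ has $o(\nu_\ell(\bar{\k})) < o(\bar{\k}) = o(\bar{E})$ by the definition of $OB$, so the inductive hypothesis supplies a Prikry type projection $\pi^{\bar{f}_j}$ on every factor of $p'_\leftarrow$. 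Applying the lifting clause of each $\pi^{\bar{f}_j}$ to the matching coordinate of $r$ --- which sits below the corresponding coordinate of $\pi(p)_\leftarrow$ on the old factors and below the trivial condition on the new ones --- yields $r^* \le p'_\leftarrow$ whose coordinatewise $\pi$-image lies $\le^{*,\pi}$-below $r$. Setting $p^* = r^* \, {}^{\frown} p^*_\rightarrow$ then gives $p^* \le p$ and $\pi(p^*) \le^{*,\pi} q$, as required.

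I expect this lifting clause to be the only real obstacle, for two related reasons. First, the Radin one point extensions dynamically create new lower-part factors $\PP_{\nu_\ell(\bar{\k})}$ through the reflection $A \downarrow \nu$, so the recursion must be arranged --- on $o(\bar{E})$, which strictly drops to $o(\nu_\ell(\bar{\k}))$ at each such step --- so that projections on \emph{all} of these simultaneously present factors are available; here it is essential that $\pi$ never alters a tree, so that the reflected trees $A^{p_\rightarrow} \downarrow \nu_\ell$ on the $\PP_{\bar{E}, \epsilon}$-side literally coincide with those on the $\PP^\pi_{\bar{E}, \epsilon}$-side and the two lower parts match coordinate by coordinate. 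Second, one must be slightly careful to land in the $\le^{*,\pi}$-order rather than only in $\le^\pi$: this is exactly what forces the choice $g(\bar{\k}) = f^s(\bar{\k})$ above --- inheriting the full stem of $s$ on the $\bar{\k}$-coordinate, not merely the stem produced by the one point extension --- together with the analogous inheritance at every coordinate of $r^*$. Granting this bookkeeping, the remaining verifications are the coordinatewise routine plus Lemmas~4.45 and~4.46.
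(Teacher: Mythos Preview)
Your construction is the same as the paper's: define $\pi$ coordinatewise by
\[
\pi(p)=\langle \pi^{\bar{e}_i}(p_i):i<n\rangle^{\frown}\pi_{\rightarrow}(p_{\rightarrow}),
\]
with the lower-part projections supplied by recursion and the $\rightarrow$-part by Lemma~4.46. You also spell out the lifting clause in considerably more detail than the paper, which simply writes ``it is easily seen that $\pi$ is as required''; your observation that $\pi$ leaves every tree untouched, so that the one-point extensions on the $\PP_{\bar{E},\epsilon}$-side and on the $\PP^\pi_{\bar{E},\epsilon}$-side match coordinate by coordinate, is exactly the point that makes the argument go through.

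There is, however, a slip in the \emph{parameter} of your recursion. You recurse on $o(\bar{E})$ and phrase the inductive hypothesis for extender sequences $\bar{e}$ with $o(\bar{e})\le o(\bar{E})$ \emph{and the same critical and closure points as $\bar{E}$}. But the lower-part factors $\bar{e}_i$ satisfy $\bar{e}_i\in V_\kappa$, hence have \emph{strictly smaller} critical point than $\bar{E}$, so your stated hypothesis does not cover them; and Definition~4.17(2-1) only gives $o(\bar{e}_i)\le o(\bar{E})$, not strict inequality, so a bare recursion on Mitchell order does not cover them either. (Your later remark that newly spawned factors $\nu_\ell(\bar{\k})$ have order $<o(\bar{E})$ is correct, but the original $\bar{e}_i$ need not.) The paper sidesteps this by recursing on the \emph{critical point}: it assumes a projection $\pi_{\bar{e}}$ exists for every extender sequence $\bar{e}\in V_\kappa$, which does cover all lower-part factors since $\bar{e}_i\in V_{crit(\bar{e}_{i+1})}\subseteq\cdots\subseteq V_\kappa$. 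Replace your recursion on $o(\bar{E})$ by this recursion on critical point (equivalently, on rank) and your argument is complete.
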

\begin{proof}
We define the projection by recursion on the critical point of the extenders. The base case was dealt in section 3.
 Now  assume that for each extender sequence $\bar{e}\in V_{\k},$ we have a projection $\pi_{\bar{e}}: \PP_{\bar{e}} \rightarrow \PP^{\pi_{\bar{e}}}_{\bar{e}},$ and we define a projection from $\PP_{\bar{E}, \epsilon}$ into $\PP_{\bar{E}, \epsilon}^{\pi}$ (where $\k$ is the critical point of extenders in $\bar{E}$).

 So let $p\in \PP_{\bar{E}, \epsilon}.$ Then $p$ has the form $p_{\leftarrow}$$^{\frown} p_{\rightarrow}$ where
\begin{enumerate}
\item $p_{\rightarrow} \in \PP_{\bar{E}, \epsilon, \rightarrow},$
\item $p_{\leftarrow} \in \prod_{i<n} \PP_{\bar{e}_i}$ $(n<\omega)$, where $\bar{e}_i$ are extender sequences such that
\begin{enumerate}
\item [(2-1)] $o(\bar{e}_i) \leq o(\bar{E}),$
\item [(2-2)] $\bar{e}_i \in V_{crit(\bar{e}_{i+1})},$
\item [(2-3)] $\langle \nu \rangle \in A^{p_{\rightarrow}} \Rightarrow \nu(\bar{\k})_0 > crit(\bar{e}_{n-1}).$
\end{enumerate}
\end{enumerate}
Let $p_{\leftarrow}=\langle p_{\bar{e_i}}: i<n     \rangle\in \prod_{i<n} \PP_{\bar{e}_i}$, and set
\begin{center}
$\pi(p)=\langle \pi_{\bar{e_i}}(p_{\bar{e_i}}) : i<n      \rangle$$^{\frown} \pi_{\rightarrow}(p_{\rightarrow}).$
\end{center}
It is easily seen that $\pi$ is as required.
\end{proof}

\subsection{More on $\PP_{\bar{E}, \epsilon}^\pi$} We use the projection $\pi$ above to prove some properties  of the forcing notion $\PP_{\bar{E}, \epsilon}^\pi$.
The next lemma can be proved as in Lemma 4.23, using Lemma 4.33 (instead of Lemma 4.11).
\begin{lemma}
$\PP_{\bar{E}, \epsilon}^{\pi}$ satisfies the $\k^+-c.c.$
\end{lemma}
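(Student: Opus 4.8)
The plan is to mimic the proof of Lemma 4.23 essentially verbatim, the only change being that we invoke the $\k^+$-chain condition of $\PP^{*,\pi}_{\bar{E}, \epsilon}$ (Lemma 4.33) where that proof invokes the $\l^+$-chain condition of $\PP^{*}_{\bar{E}, \epsilon}$ (Lemma 4.11).

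So I would argue by contradiction. Suppose $A \subseteq \PP^\pi_{\bar{E}, \epsilon}$ is an antichain with $|A| = \k^+$. Each $p \in A$ has the form $p = p_{\leftarrow}{}^{\frown} p_{\rightarrow}$, where $p_{\leftarrow} \in \prod_{i<n}\PP^\pi_{\bar{e}_i}$ for some $n<\omega$ and some extender sequences $\bar{e}_i \in V_\k$. Since $\k$ is inaccessible, every such lower part is an element of $V_\k$ — it is a finite sequence of pairs $\langle f, A\rangle$ all of whose components have rank $<\k$ — so there are at most $|V_\k| = \k$ possible lower parts. As $\k^+$ is regular, I may pass to a subset $A' \subseteq A$ with $|A'| = \k^+$ all of whose conditions have one fixed lower part $r$; $A'$ is still an antichain.

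Next I would record the compatibility fact, proved exactly as the corresponding remark inside the proof of Lemma 4.23: for $p, q \in \PP^\pi_{\bar{E}, \epsilon}$ with $p_{\leftarrow} = q_{\leftarrow} = r$, if $f^{p_{\rightarrow}}$ and $f^{q_{\rightarrow}}$ are compatible in $\PP^{*,\pi}_{\bar{E}, \epsilon}$, then $p$ and $q$ are already compatible in $\PP^\pi_{\bar{E}, \epsilon}$. Concretely one takes a common extension $f^*$ of $f^{p_{\rightarrow}}$ and $f^{q_{\rightarrow}}$ in $\PP^{*,\pi}_{\bar{E}, \epsilon}$, lifts $A^{p_{\rightarrow}}$ and $A^{q_{\rightarrow}}$ to $\dom(f^*)$, intersects them, and shrinks the result to the measure-one set on which the coherence requirements relative to $f^*$ and to $r$ hold; this yields a genuine condition below both $p$ and $q$ because $E(f^*) = \bigcap_{\xi < o(\bar{E})} E_\xi(f^*)$ is a filter and the relevant large sets — in particular $\{\langle \nu \rangle : \nu(\bar{\k})_0 > \crit(\bar{e}_{n-1})\}$ — lie in every $E_\xi(f^*)$, the measures $E_\xi$ having critical point $\k$.

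It then follows that $\{ f^{p_{\rightarrow}} : p \in A' \}$ is an antichain of size $\k^+$ in $\PP^{*,\pi}_{\bar{E}, \epsilon}$, contradicting Lemma 4.33, which finishes the argument. The only step requiring any genuine care is the compatibility fact, namely verifying that the amalgamated tree meets all the side conditions in the definitions of $\PP^\pi_{\bar{E}, \epsilon, \rightarrow}$ and $\PP^\pi_{\bar{E}, \epsilon}$; but this is routine and entirely parallel to the unprojected case already handled in Lemma 4.23, so I do not anticipate a real obstacle.
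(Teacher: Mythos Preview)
Your proposal is correct and follows exactly the route the paper indicates: the paper itself does not spell out a proof but simply remarks that the lemma ``can be proved as in Lemma 4.23, using Lemma 4.33 (instead of Lemma 4.11),'' which is precisely what you do. Your added detail on why there are only $\kappa$ many lower parts and on the compatibility step is in line with that template.
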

Using the projection $\pi$ and the results of Section 2, we can conclude that
\begin{theorem}
 $\PP_{\bar{E}, \epsilon}^\pi$ satisfies the Prikry property.
\end{theorem}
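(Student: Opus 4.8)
The plan is to deduce the statement by a single application of Lemma~2.6, in exactly the way clause~$(3)$ of Theorem~3.26 was obtained in the warm-up case of Section~3. Two ingredients are needed. First, by the Prikry type projection $\pi\colon\PP_{\bar{E},\epsilon}\to\PP_{\bar{E},\epsilon}^{\pi}$ constructed above, $\pi$ satisfies conditions $(1)$--$(3)$ of Definition~2.5 with respect to the pairs of orderings $(\leq_{\PP_{\bar{E},\epsilon}},\leq^*_{\PP_{\bar{E},\epsilon}})$ and $(\leq_{\PP_{\bar{E},\epsilon}^{\pi}},\leq^{*,\pi}_{\PP_{\bar{E},\epsilon}^{\pi}})$; in particular $\pi[\PP_{\bar{E},\epsilon}]$ is dense in $\PP_{\bar{E},\epsilon}^{\pi}$ with respect to both the $\leq$-ordering and the $\leq^*$-ordering. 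Second, by the results of subsection~4.1 quoted from \cite{mer5}, the forcing $\langle\PP_{\bar{E},\epsilon},\leq,\leq^*\rangle$ satisfies the Prikry property.

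Granting these, I would simply invoke Lemma~2.6 with $\pi$ in the role of the Prikry type projection: since its domain $\langle\PP_{\bar{E},\epsilon},\leq,\leq^*\rangle$ has the Prikry property, the conclusion of that lemma is that $\langle\PP_{\bar{E},\epsilon}^{\pi},\leq_{\PP_{\bar{E},\epsilon}^{\pi}},\leq^{*,\pi}_{\PP_{\bar{E},\epsilon}^{\pi}}\rangle$ is of Prikry type, which is precisely the assertion. Concretely, the argument inside Lemma~2.6 first decides any statement on the dense subforcing $\pi[\PP_{\bar{E},\epsilon}]$ by pushing forward, via $\pi$, a $\leq^*$-decision obtained upstairs in $\PP_{\bar{E},\epsilon}$, and then uses the $\leq^{*,\pi}$-density of $\pi[\PP_{\bar{E},\epsilon}]$ in $\PP_{\bar{E},\epsilon}^{\pi}$ to transfer that decision to an arbitrary condition of $\PP_{\bar{E},\epsilon}^{\pi}$.

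The only step that deserves any attention --- and it is bookkeeping rather than a genuine obstacle --- is to be sure that the map built above is a projection of Prikry type for the \emph{full} orderings of $\PP_{\bar{E},\epsilon}$ and $\PP_{\bar{E},\epsilon}^{\pi}$, and not merely for the direct-extension orderings or for the restriction to the tail coordinate $p_{\rightarrow}$. This is secured by the coordinatewise shape of $\pi$: on a condition $p$ with lower part $p_{\leftarrow}\in\prod_{i<n}\PP_{\bar{e}_i}$ and tail part $p_{\rightarrow}$, the map $\pi$ acts through the inductively supplied Prikry type projections $\pi_{\bar{e}_i}$ on the factors $\PP_{\bar{e}_i}$ and through the projection $\pi_{\rightarrow}$ on $p_{\rightarrow}$, with the base case already handled in Section~3; so conditions $(1)$--$(3)$ of Definition~2.5 for this product-and-tail forcing reduce to their coordinatewise instances, each of which has already been checked. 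Hence the hypotheses of Lemma~2.6 are in place and the Prikry property descends to $\PP_{\bar{E},\epsilon}^{\pi}$, as desired.
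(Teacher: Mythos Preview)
Your proposal is correct and follows exactly the approach the paper takes: the paper simply says that the result follows ``using the projection $\pi$ and the results of Section~2,'' i.e., by applying Lemma~2.6 to the Prikry type projection of Theorem~4.42 together with the Prikry property of $\PP_{\bar{E},\epsilon}$ from Lemma~4.25. Your additional remarks on the coordinatewise verification of Definition~2.5 only make explicit what the paper leaves implicit.
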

We also have the following analogue of Lemma 4.24
\begin{lemma}
(Factorization lemma for projected forcing) For any $p\in \PP^\pi_{\bar{E}, \epsilon},$
\begin{center}
$\PP^\pi_{\bar{E}, \epsilon} \downarrow p \simeq \PP^\pi_{\bar{E}, \epsilon} \downarrow p_{\leftarrow} \times \PP^\pi_{\bar{E}, \epsilon} \downarrow p_{\rightarrow}.$
\end{center}
\end{lemma}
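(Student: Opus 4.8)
The plan is to imitate the proof of the Factorization lemma for $\PP_{\bar{E}, \epsilon}$, which (as noted there) is a routine unwinding of the definitions; the only change is that in $\PP^{*,\pi}_{\bar{E}, \epsilon}$ the function part of a condition is pinned to the trivial value $\langle \rangle$ off $\bar{\k}$, so the bookkeeping is if anything simpler than in the unprojected case.

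First I would write down the candidate isomorphism $\Theta \colon \PP^\pi_{\bar{E}, \epsilon}\downarrow p \longrightarrow \big(\PP^\pi_{\bar{E}, \epsilon}\downarrow p_{\leftarrow}\big) \times \big(\PP^\pi_{\bar{E}, \epsilon}\downarrow p_{\rightarrow}\big)$. Given $q \leq_{\PP^\pi_{\bar{E}, \epsilon}} p$, the definition of the ordering supplies a sequence $\langle \nu_0, \dots, \nu_{n-1}\rangle \in A^{p_{\rightarrow}}$ together with a decomposition $q = r \append s$ satisfying $r \leq p_{\leftarrow}$ and $s \leq^{*,\pi} p_{\rightarrow \langle \nu_0, \dots, \nu_{n-1}\rangle}$. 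Passing to the canonical decomposition $q = q^{(1)} \append q^{(2)}$, where $q^{(2)}$ comprises the top block $q_{\rightarrow}$ together with the auxiliary lower-part blocks created when $p_{\rightarrow}$ is extended by $\langle \nu_0, \dots, \nu_{n-1}\rangle$ (via the reflected trees $A \downarrow \nu$), and $q^{(1)}$ is what remains, a refinement of $p_{\leftarrow}$, I would set $\Theta(q) = (q^{(1)}, q^{(2)})$ and take the concatenation map $(r, s) \mapsto r \append s$ as the candidate inverse. Clause $(2\text{-}3)$ in the definition of a condition of $\PP^\pi_{\bar{E}, \epsilon}$, which forces $\nu(\bar{\k})_0 > \crit(\bar{e}_{n-1})$ for every $\langle \nu \rangle \in A^{p_{\rightarrow}}$, guarantees that the extender sequences occurring in $q^{(1)}$ and those occurring in the auxiliary blocks of $q^{(2)}$ are pairwise disjoint; this is exactly what is needed to see both that the split $q = q^{(1)} \append q^{(2)}$ is unambiguous (so that $\Theta$ is well defined) and that $r \append s$ is always a legitimate condition below $p$ (so that $\Theta$ is onto with the stated inverse).

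Next I would check that $\Theta$ and its inverse preserve the ordering: for $q, q_0 \leq p$ one has $q \leq_{\PP^\pi_{\bar{E}, \epsilon}} q_0$ if and only if $q^{(1)} \leq q_0^{(1)}$ and $q^{(2)} \leq q_0^{(2)}$. The forward direction is immediate from the definition of $\leq_{\PP^\pi_{\bar{E}, \epsilon}}$; for the converse one assembles a single witnessing sequence for $q \leq q_0$ by concatenating the witnessing data on the two sides, which is permissible precisely because the two sides have disjoint supports, so that extending the top coordinate and carrying along the reflected trees $A \downarrow \nu$ does not interact with a refinement of the lower part. This is the same manipulation already carried out for $\PP_{\bar{E}, \epsilon}$.

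The one point demanding care, and hence the main (if minor) obstacle, is the attribution of the auxiliary lower-part factors created when the top coordinate of $p_{\rightarrow}$ is extended: these must be assigned to the $\PP^\pi_{\bar{E}, \epsilon}\downarrow p_{\rightarrow}$ factor and must never collide with factors coming from a refinement of $p_{\leftarrow}$. Once clause $(2\text{-}3)$ is invoked this is automatic, and the remainder of the argument is a transcription of the proof of the Factorization lemma for $\PP_{\bar{E}, \epsilon}$.
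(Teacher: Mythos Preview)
Your proposal is correct and matches the paper's approach: the paper does not give an explicit proof of this lemma, simply stating it as the direct analogue of the Factorization lemma for $\PP_{\bar{E},\epsilon}$ (which in turn is declared ``clear''). The explicit isomorphism $\Theta$ you describe, together with the disjointness observation from clause~$(2\text{-}3)$, is precisely the routine unwinding the paper leaves implicit.
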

Let $H$ be $\PP^\pi_{\bar{E}, \epsilon}$-generic over $V$, and let
\begin{center}
$H^{\bar{\k}}=\bigcup \{ f^p_{\rightarrow}(\bar{\k}): p\in H, \bar{\k}\in \dom(f^p_{\rightarrow})          \}$
\end{center}
and
\begin{center}
$D^{\bar{\k}}=\bigcup \{ \bar{\nu}_0: \bar{\nu}\in G^{\bar{\k}}         \}.$
\end{center}
Then as in Lemma 4.27, $D^{\bar{\k}}$ can be proved to be a club of $\k.$
Also by the same arguments as in the last subsection, we can prove the following.
\begin{theorem}
$\PP_{\bar{E}, \epsilon}^\pi$ preserves all cardinals.
\end{theorem}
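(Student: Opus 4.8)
The plan is to adapt the classical proof that Radin forcing preserves cardinals to $\PP^\pi_{\bar{E}, \epsilon}$, arguing by induction on the critical point $\k=\crit(\bar{E})$: the induction hypothesis is that every projected supercompact extender based Radin forcing with critical point below $\k$ preserves all cardinals, which at the bottom is covered by the results of Section~3. The reason the argument succeeds for $\PP^\pi_{\bar{E}, \epsilon}$ but not for $\PP_{\bar{E}, \epsilon}$ --- which collapses the cardinals of the intervals $(\k_\xi,\l_\xi)$ for limit $\xi$ --- is precisely that those collapses are carried by the nontrivial coordinates $f(\bar{\a})$ with $\bar{\a}\ne\bar{\k}$, and passing to the projected forcing kills exactly those coordinates, leaving only ``harmless'' Radin clubs.

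Cardinals $\ge\k^+$ are preserved at once by the $\k^+$-c.c.\ of $\PP^\pi_{\bar{E}, \epsilon}$ established above; this already accounts for every $V$-cardinal in $(\k,\l]$, i.e.\ for every cardinal that $\PP_{\bar{E}, \epsilon}$ collapses in that range. For $\k$ itself it is enough to observe that, once all cardinals below $\k$ are shown to be preserved, $\k$ is a limit of preserved cardinals and hence a cardinal (one can also argue directly that $\k$ stays inaccessible: via the Prikry type projection $\pi\colon\PP_{\bar{E}, \epsilon}\to\PP^\pi_{\bar{E}, \epsilon}$ and Lemma~2.2, $V[H]\subseteq V[G]$ for appropriate generics, and $\k$ is inaccessible in $V[G]$ and hence in $V[H]$). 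So the real task is preserving cardinals $<\k$.

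Fix a regular $\mu<\k$ and suppose toward a contradiction that some $p$ forces that $(\mu^+)^V$ is collapsed. Since $D^{\bar{\k}}$ is unbounded in $\k$, the conditions $p'=p'_{\leftarrow}{}^{\frown}p'_{\rightarrow}$ with $p'_{\leftarrow}\in\prod_{i<n}\PP^\pi_{\bar{e}_i}$ and $\crit(\bar{e}_{n-1})>\mu^+$ are dense below $p$: given $p=p_{\leftarrow}{}^{\frown}p_{\rightarrow}$, the set of $\nu\in OB(f^{p_{\rightarrow}})$ with $o(\nu(\bar{\k}))\ge 1$ and $\nu(\bar{\k})_0>\mu^+$ belongs to $E_1(f^{p_{\rightarrow}})$, hence meets $\Suc_{A^{p_{\rightarrow}}}(\langle\rangle)\in E(f^{p_{\rightarrow}})$, and one passes to the resulting one-point extension. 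Fix such a $p'$ below $p$. By the factorization lemma for the projected forcing,
\[
\PP^\pi_{\bar{E}, \epsilon}\downarrow p'\ \simeq\ \Bigl(\prod_{i<n}\PP^\pi_{\bar{e}_i}\downarrow p'_{\bar{e}_i}\Bigr)\times\bigl(\PP^\pi_{\bar{E}, \epsilon}\downarrow p'_{\rightarrow}\bigr).
\]
The left-hand factor is a finite product of projected Radin forcings at critical points $<\k$; by the induction hypothesis each preserves all cardinals and, since $\bar{e}_i\in V_{\crit(\bar{e}_{i+1})}$, so does the product. In the right-hand factor every tree $A$ has $\nu(\bar{\k})_0>\crit(\bar{e}_{n-1})>\mu^+$, and every one-point extension produces left parts supported on extender sequences whose first coordinate exceeds $\crit(\bar{e}_{n-1})$; consequently the direct-extension order $\leq^{*,\pi}$ of this factor is closed past the first measurable above $\crit(\bar{e}_{n-1})$ (along a short decreasing sequence the $\bar{\k}$-stem is fixed, the domains only grow and stay in $P_\l(\mathfrak{D})$, and the trees shrink by the $\k$-completeness of $E(d)=\bigcap_\xi E_\xi(d)$), so by the Prikry property of $\PP^\pi_{\bar{E}, \epsilon}$ and Lemma~2.4 it adds no new bounded subset of that measurable. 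Forcing the right-hand factor first therefore leaves the left-hand factor --- which, together with all its antichains, lives below that measurable --- undisturbed, so $(\mu^+)^V$ is still preserved when the left-hand factor is then forced over it. This contradicts the choice of $p$, and hence all cardinals $<\k$, and therefore all cardinals, are preserved.

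The delicate point, and the one that forces us to pass to the projected forcing, is the assertion that $\PP^\pi_{\bar{E}, \epsilon}\downarrow p'_{\rightarrow}$ adds no new subset of $\mu$: this factor does recursively add Radin clubs cofinal in club points that can lie far above $\mu$, and such clubs have elements below $\mu$, so one must exploit that in any one-point extension below $p'_{\rightarrow}$ the newly added club point lies strictly above $\crit(\bar{e}_{n-1})>\mu^+$, whence together with the closure of $\leq^{*,\pi}$ all of this factor's activity stays above $\mu^+$. For $\PP_{\bar{E}, \epsilon}$ the analogous factor would in addition move the coordinates $f(\bar{\a})$, $\bar{\a}\ne\bar{\k}$, which encode the collapsing maps on $(\k_\xi,\l_\xi)$ --- and this is exactly what the projection removes. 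Beyond that, the argument is the routine Radin-forcing bookkeeping, carried out in this framework in \cite{mer5}.
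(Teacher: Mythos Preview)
Your argument is correct and is precisely the kind of routine Radin-forcing bookkeeping the paper is gesturing at: the paper gives no proof beyond ``by the same arguments as in the last subsection'' (i.e.\ the arguments of Merimovich \cite{mer5} for $\PP_{\bar{E},\epsilon}$, now run with the $\k^+$-c.c.\ of Lemma~4.33 in place of the $\l^+$-c.c.), and your $\k^+$-c.c./factorization/Prikry-closure induction on the critical point is exactly that. One expository point worth tightening: when you assert that $\leq^{*,\pi}$ on $\PP^\pi_{\bar{E},\epsilon}\downarrow p'_{\rightarrow}$ is sufficiently closed, your parenthetical only treats the right-most block (fixed $\bar{\k}$-stem, growing domains, shrinking trees), whereas conditions below $p'_{\rightarrow}$ may already carry left parts $q_{\leftarrow}\in\prod_i\PP^\pi_{\bar{e}'_i}$; you should say explicitly that each such $\bar{e}'_i$ has critical point $>\crit(\bar{e}_{n-1})>\mu^+$ and then invoke the recursive closure of $\leq^{*,\pi}$ on those lower-level projected forcings as well, so that a $\leq^{*,\pi}$-decreasing $\mu^+$-sequence has a lower bound coordinatewise.
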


\subsection{Homogeneity properties}
Let $G$ be $\PP_{\bar{E}, \epsilon}$-generic over $V$, and let $G^\pi$ be the filter generated by $\pi[G].$ It follows that $G^\pi$ is $\PP_{\bar{E}, \epsilon}^\pi$-generic over $V$ and that $V \subseteq V[G^\pi] \subseteq V[G].$ By standard forcing theorems, $V[G]$ is itself a forcing extension of $V[G^\pi].$ We show that this forcing has enough homogeneity properties, which guarantees that $HOD^{V[G]} \subseteq V[G^\pi].$
The homogeneity of our quotient forcing follows from the next theorem.
\begin{lemma} (Homogeneity lemma)
For all $p, q\in \PP_{\bar{E}, \epsilon},$ if $\pi(p)=\pi(q)$, then
\begin{center}
$\PP_{\bar{E}, \epsilon} \downarrow p \simeq \PP_{\bar{E}, \epsilon} \downarrow q$.
\end{center}
In particular, if $p\Vdash$``$\phi(\a, \vec{\gamma})$'', where $\a, \vec{\gamma}$ are ordinals, then it is not the case that $q\Vdash$``$\neg\phi(\a, \vec{\gamma})$''.
\end{lemma}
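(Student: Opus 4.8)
The plan is to follow the proof of the Homogeneity lemma~3.27 for $\PP_E$, but to carry it out by recursion on the critical points of the extenders occurring in conditions, in lockstep with the recursive construction of the Prikry type projection $\pi$. In the inductive step we may assume that the homogeneity statement already holds for each forcing $\PP_{\bar e}$ with $\bar e\in V_\k$, and we are to establish it for $\PP_{\bar{E}, \epsilon}$.

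First I would unpack the hypothesis $\pi(p)=\pi(q)$. Write $p=p_\leftarrow{}^\frown p_\rightarrow$ and $q=q_\leftarrow{}^\frown q_\rightarrow$. By the construction of $\pi$, the two lower parts lie in the same product $\prod_{i<n}\PP_{\bar e_i}$ (same length $n$, same extender sequences $\bar e_0,\dots,\bar e_{n-1}$), with $\pi_{\bar e_i}(p_{\bar e_i})=\pi_{\bar e_i}(q_{\bar e_i})$ for each $i<n$, and $\pi_\rightarrow(p_\rightarrow)=\pi_\rightarrow(q_\rightarrow)$. Since $\pi_\rightarrow(\langle f,A\rangle)=\langle\pi^*(f),A\rangle$ and $\pi^*$ preserves domains while retaining only the value at $\bar\k$ (replacing every other value by $\langle\rangle$), this last equality says precisely $\dom(f^{p_\rightarrow})=\dom(f^{q_\rightarrow})=:\Delta$, $f^{p_\rightarrow}(\bar\k)=f^{q_\rightarrow}(\bar\k)$, and $A^{p_\rightarrow}=A^{q_\rightarrow}=:A$; so $p_\rightarrow$ and $q_\rightarrow$ can disagree only on the coordinates $\bar\a\in\Delta\setminus\{\bar\k\}$, and the inductive hypothesis yields the analogous description for the lower-part factors.

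Next I would define an isomorphism $\Phi\colon\PP_{\bar{E}, \epsilon}\downarrow p\to\PP_{\bar{E}, \epsilon}\downarrow q$, exactly as the maps $\Phi,\Psi$ of section~3. Given $p^*\le p$, write $p^*=r{}^\frown s$ and choose $\langle\nu_0,\dots,\nu_{m-1}\rangle\in A^{p_\rightarrow}=A$ with $s\le^* p_{\rightarrow\langle\nu_0,\dots,\nu_{m-1}\rangle}$ and $r\le p_\leftarrow$. Because $f^{p_\rightarrow}(\bar\k)=f^{q_\rightarrow}(\bar\k)$ and $A^{p_\rightarrow}=A^{q_\rightarrow}$, the same sequence $\langle\nu_0,\dots,\nu_{m-1}\rangle$ is a legitimate one-point extension sequence for $q_\rightarrow$; moreover $q_{\rightarrow\langle\nu_0,\dots,\nu_{m-1}\rangle}$ has the same $\bar\k$-coordinate as $p_{\rightarrow\langle\nu_0,\dots,\nu_{m-1}\rangle}$ and, since the reflected trees $A\downarrow\nu_i$ and the downward-shifted pieces are determined by $A$ alone, it produces the same lower-part skeleton, differing from $p_{\rightarrow\langle\nu_0,\dots,\nu_{m-1}\rangle}$ only on coordinates in $\Delta\setminus\{\bar\k\}$. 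I then let $\Phi(p^*)$ be obtained from $p^*$ by replacing the $\Delta$-part of $f^{s}$ with the $\Delta$-part of $f^{q_{\rightarrow\langle\nu_0,\dots,\nu_{m-1}\rangle}}$, leaving $f^{s}$ unchanged outside $\Delta$, leaving the tree $A^{s}$ unchanged, and applying the inductive isomorphisms $\Phi_{\bar e_i}$ coordinatewise to $r$. One then checks $\Phi(p^*)\le q$, that $\Phi$ is order preserving for both $\le$ and $\le^*$, and — defining $\Psi$ by the symmetric recipe with the roles of $p$ and $q$ interchanged — that $\Psi=\Phi^{-1}$; this gives $\PP_{\bar{E}, \epsilon}\downarrow p\simeq\PP_{\bar{E}, \epsilon}\downarrow q$. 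For the ``in particular'' clause, $\Phi$ fixes the canonical names for ordinals, so it carries a decision $p\Vdash\phi(\a,\vec\gamma)$ to $q\Vdash\phi(\a,\vec\gamma)$, whence $q$ cannot force the negation. As in Remark~3.28, the argument in fact needs only that $\pi(p)$ and $\pi(q)$ be compatible in the $\le^{*,\pi}$-ordering.

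\textbf{The main difficulty.} The delicate point — where the Radin case genuinely diverges from Lemma~3.27 — is that one-point extensions in $\PP_{\bar{E}, \epsilon}$ push data downward: $q_{\rightarrow\langle\nu\rangle}$ acquires a lower component coming from the shifted pieces $g_\leftarrow(\nu(\bar\a))=f(\bar\a)\restriction(\dots)$ and from the reflection $A\downarrow\nu$. I must verify that the swap on the coordinates in $\Delta\setminus\{\bar\k\}$ is coherent with these shifts: that the piece pushed down from a coordinate $\bar\a$ lands in an extension of precisely the factor $\PP_{\bar e_i}\downarrow p_{\bar e_i}$ to which the inductive hypothesis applies, and that the map it induces there is exactly $\Phi_{\bar e_i}$, so that the recursion closes, i.e.\ $\pi\circ\Phi=\pi$ holds level by level. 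Establishing this compatibility between the downward-shift mechanism of the forcing and the inductively constructed isomorphisms is the technical heart of the argument; everything else is bookkeeping parallel to section~3.
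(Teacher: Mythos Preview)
Your proposal is correct and follows essentially the same strategy as the paper: recursion on the critical points of the extender sequences, applying the inductively assumed isomorphisms $\Phi_{\bar e_i}$ to each lower-part factor and a coordinate-swap on $\Delta\setminus\{\bar\k\}$ to the $\rightarrow$-part. The only organizational difference is that the paper packages the $\rightarrow$-part as two short sublemmas (homogeneity for $\PP^*_{\bar E,\epsilon}$ and for $\PP_{\bar E,\epsilon,\rightarrow}$, each with respect to $\leq^*$) before assembling $\Phi(p^*)=\langle\Phi_{\bar e_i}(p^*_{\bar e_i}):i<n\rangle{}^\frown\Phi_\rightarrow(p^*_\rightarrow)$, and the coherence-with-downward-shifts issue you correctly flag as the main difficulty is left implicit there under ``easily seen to be as required.''
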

\begin{proof}
To prove the lemma, we first state and prove an analogous result for forcing notions $\PP^*_{\bar{E}, \epsilon}$ and $\PP_{\bar{E}, \epsilon, \rightarrow}$, and then use these results to prove the lemma by  recursion.
\begin{itemize}
\item ({\bf Homogeneity lemma for $\PP^*_{\bar{E}, \epsilon}$}): For all $f,g\in \PP^*_{\bar{E}, \epsilon},$ if $\pi^*(f)=\pi^*(g)$, then
\begin{center}
$\PP^*_{\bar{E}, \epsilon} \downarrow f \simeq \PP^*_{\bar{E}, \epsilon} \downarrow g$.
\end{center}
\begin{proof}
As $\pi^*(f)=\pi^*(g)$, we have $\dom(f)=\dom(g)=\Delta$, and $f(\bar{\k})=g(\bar{\k}).$ Define
\begin{center}
$\Phi^*: \PP^*_{\bar{E}, \epsilon} \downarrow f \rightarrow \PP^*_{\bar{E}, \epsilon} \downarrow g$
\end{center}
as follows: let $f^* \leq^*_{\PP^*_{\bar{E}, \epsilon}} f.$ Let $\Phi^*(f^*)=g \cup f^* \upharpoonright (\dom(f^*)\setminus \Delta)$. Clearly $\Phi^*(f^*) \leq^*_{\PP^*_{\bar{E}, \epsilon}} g,$ and so $\Phi^*$ is well-defined. It is clearly an isomorphism.
\end{proof}

\item ({\bf Homogeneity lemma for $\PP_{\bar{E}, \epsilon, \rightarrow}$}): For all $p, q\in \PP_{\bar{E}, \epsilon, \rightarrow},$ if $\pi_{\rightarrow}(p)=\pi_{\rightarrow}(q)$, then
\begin{center}
$\PP_{\bar{E}, \epsilon, \rightarrow } \downarrow p \simeq \PP_{\bar{E}, \epsilon, \rightarrow} \downarrow q$.
\end{center}
\begin{proof}
Let $p= \langle f, A \rangle$ and $q= \langle g, B \rangle,$ and assume that $\pi_{\rightarrow}(p)= \langle \pi^*(f), A \rangle  = \langle \pi^*(g), B \rangle  = \pi_{\rightarrow}(q)$. Define
\begin{center}
$\Phi_{\rightarrow}: \PP_{\bar{E}, \epsilon, \rightarrow} \downarrow p \rightarrow \PP_{\bar{E}, \epsilon, \rightarrow} \downarrow q$
\end{center}
as follows: $\Phi_{\rightarrow}(\langle f^*, A^* \rangle)= \langle \Phi^*(f^*), A^*  \rangle.$ $\Phi_{\rightarrow}$ is easily seen to be well-defined and an isomorphism.
\end{proof}
\end{itemize}
We are now ready to complete the proof of the homogeneity lemma 4.47. We produce the isomorphism by recursion on the critical point of the extenders. The base case was dealt in section 3.
 Now  assume that for each extender sequence $\bar{e}\in V_{\k},$ and each $p, q \in \PP_{\bar{e}}$, if $\pi_{\bar{e}}(p)=\pi_{\bar{e}}(q)$, then
 we have an isomorphism
 \begin{center}
$\Phi_{\bar{e}}: \PP_{\bar{e}}\downarrow p \simeq\PP_{\bar{e}} \downarrow q,$
 \end{center}
and we define an  isomorphism
\begin{center}
$\Phi: \PP_{\bar{E}, \epsilon} \downarrow p \simeq \PP_{\bar{E}, \epsilon} \downarrow q$,
\end{center}
where $p, q \in \PP_{\bar{E}, \epsilon}$ are such that $\pi(p)=\pi(q).$

So let $p^*\in \PP_{\bar{E}, \epsilon}, p^* \leq_{\PP_{\bar{E}, \epsilon}} p.$ Then $p^*$ has the form $p^*_{\leftarrow}$$^{\frown} p^*_{\rightarrow}$ where
\begin{enumerate}
\item $p^*_{\rightarrow} \in \PP_{\bar{E}, \epsilon, \rightarrow},$
\item $p^*_{\leftarrow} \in \prod_{i<n} \PP_{\bar{e}_i}$ $(n<\omega)$, where $\bar{e}_i$ are extender sequences such that
\begin{enumerate}
\item [(2-1)] $o(\bar{e}_i) \leq o(\bar{E}),$
\item [(2-2)] $\bar{e}_i \in V_{crit(\bar{e}_{i+1})},$
\item [(2-3)] $\langle \nu \rangle \in A^{p_{\rightarrow}} \Rightarrow \nu(\bar{\k})_0 > crit(\bar{e}_{n-1}).$
\end{enumerate}
\end{enumerate}
Let $p^*_{\leftarrow}=\langle p^*_{\bar{e_i}}: i<n     \rangle\in \prod_{i<n} \PP_{\bar{e}_i}$, and define
\begin{center}
$\Phi(p^*)= \langle  \Phi_{\bar{e_i}}(p^*_{\bar{e_i}}): i<n    \rangle^{\frown}   \Phi_{\rightarrow}(p^*_{\rightarrow}).$
\end{center}
$\Phi$ is easily seen to be as required.
\end{proof}

\subsection{Completing the proof of Theorem 1.3}
We are finally ready to complete the proof of Theorem 1.3. As it was stated at the beginning of this section, it suffices to prove Theorem 4.1. Thus fix a Mitchell increasing sequence $\bar{E}=\langle E_\xi: \xi < o(\bar{E})$  of extenders on $\k$ as in Subsection
4.1, where $cf(o(\bar{E}))> \gl,$ and consider the forcing notions $\PP_{\bar{E}}=\PP_{\bar{E}, \gl}$ and $\PP^\pi_{\bar{E}}=\PP^\pi_{\bar{E}, \gl}$ (thus we are assuming $\epsilon=\gl$). By Theorem 4.42, there exists a projection $\pi: \PP_{\bar{E}} \rightarrow \PP^\pi_{\bar{E}}.$ Let $G$ be $\PP_{\bar{E}}$-generic over $V$ and let $G^\pi$ be the filter generated by $\pi[G].$ We know that $G^\pi$ is $\PP^\pi_{\bar{E}}$-generic over $V$ and $V \subseteq V[G^\pi] \subseteq V[G].$ Note that the clubs $C^{\bar{\k}}$ (added by $G$) and $D^{\bar{\k}}$ (added by $G^\pi$) are the same. Let $\langle \k_\xi: \xi<\k \rangle$ be an increasing enumeration of them. For each limit ordinal $\xi<\k$
\begin{enumerate}
\item In $V[G],$ $(\k_\xi)^+=\gl_\xi$, where $\gl_\xi$ is the least inaccessible above $\k_\xi,$
\item In $V[G^\pi]$, all cardinals are preserved and each $\gl_\xi$ is an inaccessible cardinal,
\item $HOG^{V[G]} \subseteq V[G^\pi].$
\end{enumerate}
It follows that for each limit ordinal $\xi<\k, (\k_\xi^+)^{V[G]}$ is an inaccessible cardinal in $HOG^{V[G]}.$ The result follows immediately.

\end{document}